\documentclass[12pt,reqno]{amsart}
\usepackage{amsmath}
\usepackage{amssymb}
\usepackage{verbatim}
\usepackage{graphicx}
\usepackage[font=footnotesize]{caption}
\usepackage{subcaption}
\captionsetup[subfigure]{labelfont=rm}
\usepackage{epstopdf}
\usepackage{multirow}
\usepackage{url}
\usepackage{makecell}
\usepackage{tabularx}
\usepackage{longtable}
\usepackage{graphicx}
\usepackage{tikz}
\usepackage{pgfplots}
\usepgfplotslibrary{polar}
\usepackage{xcolor}
\usepackage{pgfplots}
\usepackage{pgfplotstable}
\allowdisplaybreaks

\usepackage{bm}
\usepackage[toc]{appendix}
\usepackage[capitalise]{cleveref}
\usepackage{color}
\allowdisplaybreaks

\usepackage[top=0.5in,bottom=0.5in,left=0.7in,right=0.7in]{geometry}

\newtheorem{lemma}{Lemma}

\newtheorem{theorem}[lemma]{Theorem}
\newtheorem{remark}[lemma]{Remark}

\newtheorem{example}[lemma]{Example}

\newcommand{\N}{\mathbb{N}}    
\newcommand{\NN}{\mathbb{N}_0} 
\newcommand{\R}{\mathbb{R}}    


\newcommand{\nv}{\vec{n}}
\newcommand{\bo}{\mathcal{O}}
\newcommand{\Op}{\Omega_+}
\newcommand{\Om}{\Omega_-}

\newcommand{\ka}{\textsf{k}}

\newcommand{\B}{\mathcal{B}}
\newcommand{\id}{\mathbf{I}_d}
\newcommand{\gd}{g}
\newcommand{\gn}{g_{\Gamma}}

\newcommand{\be}{ \begin{equation} }
	\newcommand{\ee}{ \end{equation} }
\newcommand{\odd}{\operatorname{odd}}
\newcommand{\ind}{\Lambda}

\numberwithin{equation}{section}
\numberwithin{lemma}{section}


\begin{document}
	
\title[Hybrid Finite Difference Methods for Elliptic Interface Problems]{Sixth-Order Hybrid Finite Difference Methods for Elliptic Interface Problems with Mixed Boundary Conditions
}

\author{Qiwei Feng, Bin Han, and Peter Minev}

\thanks{Research supported in part by Natural Sciences and Engineering Research Council (NSERC) of Canada under grants RGPIN-2019-04276 (Bin Han), RGPIN-2017-04152 (Peter Minev),  Digital Research Alliance of Canada (https://alliancecan.ca/en)}

\address{Department of Mathematical and Statistical Sciences,
	University of Alberta, Edmonton, Alberta, Canada T6G 2G1.
	\quad {\tt qfeng@ualberta.ca}
	\quad {\tt bhan@ualberta.ca}
	\quad {\tt minev@ualberta.ca}
}

\address{Department of Mathematics, Center for Research in Scientific Computation, North Carolina State University, Raleigh, NC, USA 27695.
	\quad {\tt qfeng4@ncsu.edu}
}

	\makeatletter \@addtoreset{equation}{section} \makeatother

	\begin{abstract}			
In this paper, we develop sixth-order hybrid finite difference methods (FDMs) for the elliptic interface problem $-\nabla \cdot( a\nabla u)=f$ in $\Omega\backslash \Gamma$, where $\Gamma$ is a smooth interface inside $\Omega$. The variable scalar coefficient $a>0$ and source $f$ are possibly discontinuous across $\Gamma$.
The hybrid FDMs utilize a $9$-point compact stencil at any interior regular points of the grid and a $13$-point stencil at irregular points near $\Gamma$.
For interior regular points away from $\Gamma$, we obtain a sixth-order $9$-point compact FDM satisfying the sign and sum conditions for ensuring the M-matrix property.
We also derive sixth-order compact ($4$-point for corners and $6$-point for edges) FDMs satisfying the sign and sum conditions
for the M-matrix property
at any boundary point subject to (mixed) Dirichlet/Neumann/Robin boundary conditions.
Thus, for the elliptic problem without interface (i.e., $\Gamma$ is empty), our compact FDM has the M-matrix property for any mesh size $h>0$ and consequently, satisfies the discrete maximum principle, which guarantees the theoretical sixth-order convergence.
For irregular points near $\Gamma$, we propose fifth-order $13$-point FDMs, whose stencil coefficients can be effectively calculated by recursively solving several small linear systems.
Theoretically, the proposed high order FDMs use high order (partial) derivatives of the coefficient $a$, the source term $f$, the interface curve $\Gamma$, the two jump functions along $\Gamma$, and the functions on $\partial \Omega$. Numerically, we always use function values to approximate all required high order (partial) derivatives in our hybrid FDMs without losing accuracy. Our proposed FDMs are independent of the choice representing $\Gamma$ and are also applicable if the jump conditions on $\Gamma$ only depend on the geometry (e.g., curvature) of the curve $\Gamma$.
Our numerical experiments confirm the sixth-order convergence in the $l_{\infty}$ norm of the proposed hybrid FDMs for the elliptic interface problem.
	\end{abstract}

	\keywords{Elliptic interface problems,  M-matrix for any $h$, high
		order consistency, mixed boundary conditions, corner treatments,   discontinuous and  scalar variable coefficients, complex interface curves}
	
	\subjclass[2010]{65N06, 35J15, 76S05, 41A58}
	\maketitle
	
	\pagenumbering{arabic}
	
	\section{Introduction and motivations}\label{Introdu:motiva}

Elliptic interface problems with discontinuous coefficients appear in many real-world applications: composite materials, fluid mechanics, nuclear waste disposal, and many others.
Consider the domain $\Omega=(l_1, l_2)\times(l_3, l_4)$ and a smooth two-dimensional function $\psi$. Define a smooth curve $\Gamma:=\{(x,y)\in \Omega: \psi(x,y)=0\}$, which partitions $\Omega$ into two subregions:
	$\Op:=\{(x,y)\in \Omega\; :\; \psi(x,y)>0\}$ and $\Om:=\{(x,y)\in \Omega\; : \; \psi(x,y)<0\}$. We also define
	$
	a_{\pm}:=a \chi_{\Omega_{\pm}}$, $f_{\pm}:=f \chi_{\Omega_{\pm}}$ and $u_{\pm}:=u \chi_{\Omega_{\pm}}.
	$
The model problem considered in this paper is defined to be:
	\be \label{Qeques2}
		\begin{cases}
		-\nabla \cdot( a\nabla u)=f &\text{in $\Omega \setminus \Gamma$},\\	
			\left[u\right]=\gd, \quad \left[a\nabla  u \cdot \nv \right]=\gn &\text{on $\Gamma$},\\
			\B_1 u =g_1 \text{ on } \Gamma_1:=\{l_{1}\} \times (l_3,l_4), & \B_2 u =g_2 \text{ on } \Gamma_2:=\{l_{2}\} \times (l_3,l_4),\\
\B_3 u =g_3 \text{ on } \Gamma_3:=(l_1,l_2) \times \{l_{3}\}, &
\B_4 u =g_4 \text{ on } \Gamma_4:=(l_1,l_2) \times \{l_{4}\},
		\end{cases}
	\ee
	where  $f$ is the source term, and for any point $(x_0,y_0)\in \Gamma$ on the interface $\Gamma$, we define
	\begin{align*}
		[u](x_0,y_0) & :=\lim_{(x,y)\in \Op, (x,y) \to (x_0,y_0) }u(x,y)- \lim_{(x,y)\in \Om, (x,y) \to (x_0,y_0) }u(x,y),\\
		[ a\nabla  u \cdot \nv](x_0,y_0) & :=  \lim_{(x,y)\in \Op, (x,y) \to (x_0,y_0) } a\nabla  u(x,y) \cdot \nv- \lim_{(x,y)\in \Om, (x,y) \to (x_0,y_0) } a \nabla  u(x,y) \cdot \nv,
	\end{align*}
where $\nv$ is the unit normal vector of $\Gamma$ pointing towards $\Op$.
In \eqref{Qeques2}, the boundary operators $\B_1,\ldots,\B_4  \in \{\id,\frac{\partial }{\partial \nv}+ \alpha \id\}$, where
$\id$ represents the Dirichlet boundary condition; when $\alpha=0$, $\frac{\partial }{\partial \nv}$ represents the Neumann boundary condition;  when $\alpha \ne 0$, $\frac{\partial }{\partial \nv}+\alpha \id$ represents the Robin boundary condition.   An example for the boundary conditions of  \eqref{Qeques2} is shown in \cref{fig:model:problem}.

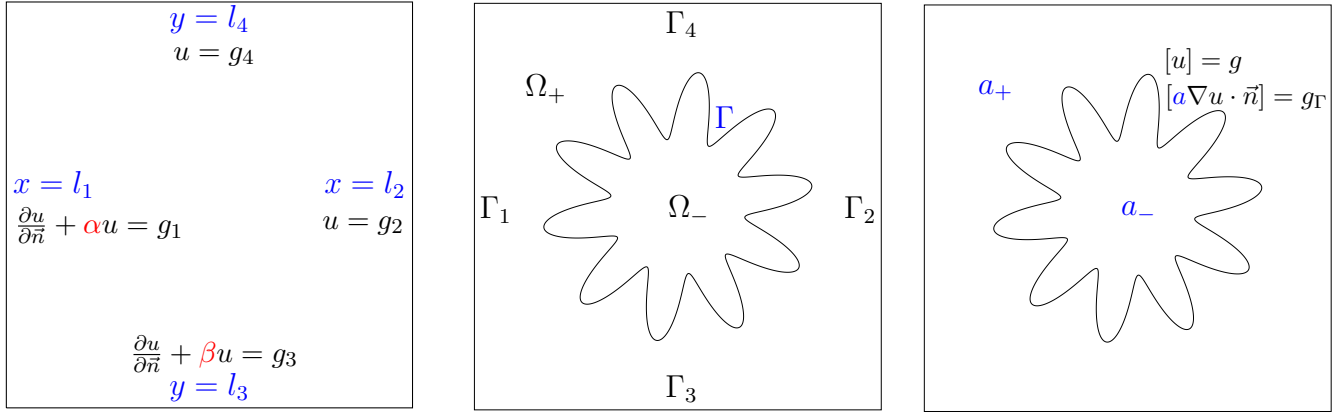
\begin{figure}[htbp]
	\centering	
	\begin{subfigure}[b]{0.3\textwidth}
	\hspace{-0.9cm}
	\begin{tikzpicture}[scale = 0.9]
		\draw
		(-3, -3) -- (-3, 3) -- (3, 3) -- (3, -3) --(-3,-3);
		\node (A) at (-2.3,0.3) {\color{blue}{$x=l_1$}};
		\node (A) at (-1.7,-0.3) {{\small{ $ \tfrac{\partial u}{\partial \nv}+{\color{red}{\alpha}} u=g_1$}}};
		\node (A) at (2.3,0.3) {\color{blue}{$x=l_2$}};
		\node (A) at (2.2,-0.3) {{\small{ $u=g_2$}}};
		\node (A) at (0,-2.7) {\color{blue}{$y=l_3$}};
		\node (A) at (0,-2.2) {{\small{ $\tfrac{\partial u}{\partial \nv}+{\color{red}{\beta}} u=g_3$}}};
		\node (A) at (0,2.75) {\color{blue}{$y=l_4$}};
		\node (A) at (0,2.2) {{\small{ $u=g_4$}}};
	\end{tikzpicture}
\end{subfigure}
	\begin{subfigure}[b]{0.3\textwidth}
		\begin{tikzpicture}[scale = 0.95]	
	\begin{axis}[
	xmin=-3, xmax=3,
	ymin=-3, ymax=3,
	axis equal image,
	xtick=\empty,
	ytick=\empty]
	\addplot[data cs=polar,black,domain=0:360,samples=360,smooth] (x,{(1.5+0.5*sin(10*\x)) });
\end{axis}
	\node (A) at (0.3,2.8) {$\Gamma_1$};
	\node (A) at (5.4,2.8) {$\Gamma_2$};
	\node (A) at (2.9,5.4) {$\Gamma_4$};
	\node (A) at (2.9,0.3) {$\Gamma_3$};
	\node (A) at (3.5,4.1) {\color{blue}{$\Gamma$}};
	\node (A) at (3,2.8) {$\Omega_-$};
	\node (A) at (1,4.5) {$\Omega_+$};
		\end{tikzpicture}
	\end{subfigure}
	\begin{subfigure}[b]{0.3\textwidth}
		\hspace{0.4cm}
		\begin{tikzpicture}[scale = 0.95]	
	\begin{axis}[
		xmin=-3, xmax=3,
		ymin=-3, ymax=3,
		axis equal image,
		xtick=\empty,
		ytick=\empty]
		\addplot[data cs=polar,black,domain=0:360,samples=360,smooth] (x,{(1.5+0.5*sin(10*\x)) });
	\end{axis}
	\node (A) at (3,2.8) {\color{blue}{$a_-$}};
	\node (A) at (1,4.5) {\color{blue}{$a_+$}};
	\node (A) at (3.9,4.9) {{\footnotesize{$\left[u\right]=g$}}};
	\node (A) at (4.45,4.4) { {\footnotesize{ $\left[{\color{blue}{a}}\nabla  u \cdot \nv \right]=g_{\Gamma}$}}};
\end{tikzpicture}
	\end{subfigure}
	\caption
	{ An example for the (mixed) boundary conditions of  \eqref{Qeques2}, where $\B_1u=\frac{\partial u}{\partial \nv}+\alpha u$, $\B_2u=u$,  $\B_3u=\frac{\partial u}{\partial \nv}+\beta u$ and  $\B_4u= u$. Note that $a_{+}(x,y)$ and $a_{-}(x,y)$ are 2D functions, $a_{+}(x,y)\ne a_{-}(x,y)$ can happen for $(x,y)\in \Gamma$, while $\alpha(y)$ and $\beta(x)$ are 1D functions for describing boundary conditions.}
	\label{fig:model:problem}
\end{figure}

On the one hand,
the extensively studied elliptic problem without interface corresponds to \eqref{Qeques2} with $\Gamma$ being empty (i.e., no interface $\Gamma$).
To solve  $-\nabla \cdot( a\nabla u)=f$ without interface for a scalar variable coefficient $a>0$,
the monotonicity is sufficient to guarantee that the corresponding scheme satisfies a discrete maximum principle which is used to prove the convergence rate  theoretically. Furthermore, the M-matrix property is sufficient to prove that the corresponding scheme is monotone. For the convection-diffusion problem, the stiffness matrix of the first-order  finite element method (FEM) in \cite{XuZikatanov1999} is an M-matrix  under some mild constraints on the finite element grids.
Note that even for the Poisson equation $-\Delta u=f$  almost all  high order schemes, except for some high order 9-point FDMs, do not result in an M-matrix, due to positive off-diagonal entries \cite{LiZhang2020}. For example, for the second-order FEM on a generic triangular mesh,  a very strong mesh constraint is required to satisfy the discrete maximum principle  for $-\Delta u=0$ \cite{Mittelmann1981}. For the third and higher order FEM  on regular triangular meshes,  \cite{Vejchodsky2009} showed that the discrete maximum principle could not hold for $-\Delta u=f$.
For the elliptic equation $-\nabla \cdot( a\nabla u)+cu=f$ with a scalar variable coefficient $a>0$ and $c\ge 0$,  \cite{LiZhang2020}  proposed a fourth-order FDM implementation of $C^0$--$Q^2$ FEM, where the corresponding  matrix is not an M-matrix but monotone under some suitable mesh constraints. The additional mesh constraints in \cite{LiZhang2020} can be satisfied for small $h$, but are not all satisfied for any $h$ even if $a$ is concave and $c=0$.
In this paper, we explicitly construct  a 9-point compact  scheme for the elliptic equation with a variable scalar coefficient, that has the sixth-order consistency, and satisfies the  M-matrix property for any $h>0$, without any mesh constraint.

At present, perhaps the most popular approach to handle elliptic problems with discontinuous coefficients
is the so-called  immersed interface method (IIM), proposed by LeVeque and Li (e.g., see \cite{LeLi94}).
It has been combined with FDM, FEM, and finite volume method (FVM) spatial discretizations, with various degrees of accuracy.
Some of the most important developments include: the second-order IIM \cite{CFL19,LeLi94},
the first-order immersed finite volume-element method \cite{EwingLLL99},
the second-order immersed FEM \cite{GongLiLi08,HeLL2011},
the second-order  fast iterative IIM  \cite{Li98}, the second-order explicit-jump IIM  \cite{WieBube00},
the third-order  9-point compact FDM  \cite{PanHeLi21}, and
fourth-order IIM  \cite{XiaolinZhong07}.
Another possible approach to handle irregular points is the matched interface and boundary method (MIB). The
related papers of MIB for the elliptic interface problems can be summarized as:  second-order MIB  \cite{YuZhouWei07},
fourth-order MIB  \cite{ZW06},  fourth-order augmented MIB with the FFT acceleration  \cite{FendZhao20pp109677,RenZhao2023}, sixth-order  MIB  \cite{YuWei073D,ZZFW06}.
For the anisotropic elliptic interface problems with discontinuous and matrix coefficients, \cite{DFL20} proposed a new second-order finite
element-finite difference (FE-FD) method.
		A relatively simple second-order finite volume technique for elliptic problems with discontinuous solutions was introduced in \cite{BochkovGibou2020}.  An attractive feature of this approach is that it yields a linear system with a bounded condition number. \cite{EganGibou2020} proposed the so-called xGFM (extended Ghost Fluid
		Method) to recover convergence of the fluxes. Another second-order method called Voronoi Interface Method, that yields a  symmetric positive definite matrix,  was introduced in \cite{GuittetLTG2015}.
\cite{FHM21b} developed a 9-point compact FDM for elliptic interface problems with discontinuous scalar coefficients, that is formally fourth-order consistent  away from the interface of singularity of the solution (regular points), and third-order consistent  in the vicinity of the interface (irregular points).
For the elliptic cross-interface
problem with a vertical and a horizontal straight line, we derived a sixth-order 9-point  compact FDM with the M-matrix property for the specific case (the internal interfaces coincide with some grid lines) in \cite{FHM2023}. For the general case (the interfaces are not matched by grid lines), we proposed a fourth/fifth-order 9-point compact FDM without the M-matrix property and a third-order 9-point compact  FDM with the M-matrix property in \cite{FHM2023}.
In the present paper  we derive a 9-point compact scheme that has the sixth-order consistency  at regular  points. Additionally, we derive
a 13-point  discretization at irregular points that achieves the fifth-order consistency. Further, the two discretizations are combined in a hybrid scheme that utilizes a 9-point stencil  with the sixth-order consistency for  regular points and a 13-point stencil with the fifth-order consistency for irregular points.   Our numerical experiments confirm the sixth-order convergence in the $l_{\infty}$ norm.
Furthermore, we also propose a recursive solver to efficiently derive the stencil coefficients of the proposed scheme. The resulting sixth-order hybrid scheme shows a significantly improved numerical performance with a slight increase in its complexity over the fourth-order scheme in \cite{FHM21b}. Theoretically, our hybrid FDMs use high order (partial) derivatives of the coefficient function $a$, the source term $f$, the interface curve $\Gamma$, the two jump functions $g,g_{\Gamma}$ on $\Gamma$, and boundary data. In this paper, we always use a numerical technique to employ only function values to estimate all required high order (partial) derivatives in our proposed hybrid FDMs without losing their accuracy and performance.

 A comprehensive literature review of the high order schemes for mixed boundary conditions  can be found in \cite{LiPan2021}. In addition, one should also mention the following literature concerned with the discretization of the boundary conditions for Poisson/elliptic/Helmholtz problems in rectangular domains:
the sixth-order  FDM for 1-side Neumann/Robin and 3-side Dirichlet boundary conditions of Helmholtz equations \cite{Nabavi07,TGGT13},
 the fourth-order FDM for flux boundary conditions for diffusion-advection/anisotropic equations \cite{LiPan2021},
4th-8th-order MIB methods with the FFT acceleration for mixed boundary conditions of Dirichlet/Neumann/Robin for Poisson/elliptic/Helmholtz equations  \cite{FendZhao20pp109677,FendZhao20pp109391}. Furthermore, \cite{ZhaoWei2009} proposed the MIB method to implement general boundary conditions in high order central FDMs in various differential equations.
For elliptic problems with various boundary conditions in non-rectangular domains, \cite{RenFengZhao2022} proposed
a fourth-order augmented MIB with the FFT acceleration,
\cite{WieBube00} developed a
second-order explicit-jump IIM, and \cite{ItoLiKyei05,LiIto06,PanHeLi21}
proposed third/fourth-order FDMs.
In \cite{FHM21Helmholtz}, we discussed sixth-order FDMs for various boundary conditions of the Helmholtz equation with constant wavenumbers. In this paper, we consider the elliptic equation with the variable coefficient $a$ and mixed combinations  of Dirichlet,  Neumann $\tfrac{\partial u}{\partial \nv}$, and Robin boundary conditions $\tfrac{\partial u}{\partial \nv}+\alpha u$, $\tfrac{\partial u}{\partial \nv}+\beta u$ with  variable functions $\alpha,\beta$ (see \cref{fig:model:problem} for an example of the mixed boundary conditions). Finally,	we  derive the $6$-point FDM for edge points and $4$-point FDM for corner points  with the sixth-order consistency and the M-matrix property for any $h>0$ if $\alpha\ge0$  and $\beta\ge0$ on $\partial \Omega$ (see \cref{fig:model:problem}).

In this paper, we consider the model  problem \eqref{Qeques2} under the following assumptions:
	\begin{itemize}
		
	\item[(A1)] The coefficient	$a$ is positive and has uniformly continuous partial derivatives of (total) orders up to six  in each of the subregions $\Op$ and $\Om$, but $a$ may be discontinuous across $\Gamma$.
		
		\item[(A2)] The solution $u$ and the source term $f$ have uniformly continuous partial derivatives of (total) orders up to seven and five respectively in each of the subregions $\Op$ and $\Om$. Both the solution $u$ and the source function $f$ can be discontinuous across the interface $\Gamma$.

		\item[(A3)] The interface curve $\Gamma$ is smooth: for each $(x^*,y^*)\in \Gamma$, there exists a local parametric equation of $\Gamma$: $(r(t),s(t))$  such that $(r(t^*),s(t^*))=(x^*,y^*)$	for some $t^*\in \R$, $(r'(t^*),s'(t^*)) \ne (0,0)$,  $r(t)$ and $s(t)$ have uniformly continuous derivatives of (total) order up to five for $t=t^*$.
		
		\item[(A4)] The (essentially 1D) interface jump functions $\gd$ and $\gn$ along $\Gamma$ have uniformly continuous derivatives of (total) orders up to five and four respectively on the interface $\Gamma$.
		
		\item[(A5)] All 1D boundary functions $g_1,\ldots,g_4$ in \eqref{Qeques2} and $\alpha, \beta$ in the Robin boundary conditions
have uniformly continuous derivatives of (total) order up to five on the boundary $\partial\Omega$.
	\end{itemize}

	The organization of this paper is as follows.
		In \cref{subsec:regular} we explicitly construct a 9-point discretization for interior regular points, with the sixth-order consistency, satisfying the M-matrix property for any $h>0$,  without any mesh constraints. We also extend this result to the boundary points. For the sake of readability, we give  in \cref{hybrid:sec:proofs} the corresponding 6-point and 4-point discretizations in the vicinity of $\partial \Omega$,
	 in \cref{thm:Robin:Gamma1,thm:Corner:1}, respectively.
		In \cref{hybrid:Irregular:points}, we provide the 13-point FDM with the fifth-order consistency for irregular points.
We explicitly derive a recursive solver that decomposes the original linear system in \cref{thm:13point:scheme} into several small linear systems for computing the stencil coefficients effectively.
In \cref{Estimate:derivatives}, we discuss how to estimate high order (partial) derivatives used in the computation of the stencil coefficients, only using the values of the corresponding function.
	 In \cref{sec:numerical}, we present some numerical examples which confirm the sixth-order  convergence of the proposed hybrid scheme in the $l_{\infty}$ norm.	
In \cref{sec:hybrid:Conclu}, we summarize the main contributions of this paper. Finally, in \cref{hybrid:sec:proofs}, we present the proofs of \cref{thm:regular:interior,thm:tranmiss:interface,thm:13point:scheme} in \cref{sec:sixord} and \cref{thm:Robin:Gamma1,thm:Corner:1} for sixth-order FDMs for mixed boundary conditions.
	\section{Hybrid FDMs on uniform Cartesian grids for the elliptic interface problem}
	\label{sec:sixord}

In this section we propose hybrid FDMs on uniform Cartesian grids by using $9$-point compact stencils at regular points and $13$-point stencils at irregular points near the interface $\Gamma$.

\subsection{Some auxiliary identities}
To present hybrid FDMs for the elliptic interface problem, we  introduce some auxiliary identities used in this paper.
First, we shall use the following notations for the $(m,n)$th partial derivatives:
	\be\label{ufmn}
		\begin{split}
		&a^{(m,n)}:=\frac{\partial^{m+n} a}{ \partial^m x \partial^n y},
	\qquad u^{(m,n)}:=\frac{\partial^{m+n} u}{ \partial^m x \partial^n y},\qquad
	f^{(m,n)}:=\frac{\partial^{m+n} f}{ \partial^m x \partial^n y}.
	\end{split}
	\ee
From $-\nabla \cdot( a\nabla u)=f$, we have $au_{xx}+a u_{yy}+a_x u_x+a_y u_y=-f$, which is just
\be \label{uderivx2}
u^{(2,0)}=-\left(\frac{f}{a}\right)-u^{(0,2)}-\left(\frac{a^{(1,0)}}{a}\right) u^{(1,0)}-\left(\frac{a^{(0,1)}}{a}\right) u^{(0,1)}.
\ee
%
Consequently, for any $m,n\in \NN$, applying the Leibniz differentiation formula to \eqref{uderivx2}, we obtain
\be \label{recursive}
\begin{split}
u^{(m+2,n)}=
&-\left(\tfrac{f}{a}\right)^{(m,n)}
-u^{(m,n+2)}\\
&-\sum_{\textsf{i}=0}^m \binom{m}{\textsf{i}} \sum_{\textsf{j}=0}^n \binom{n}{\textsf{j}} \left(\left(\tfrac{a^{(1,0)}}{a}\right)^{(m-\textsf{i},n-\textsf{j})}  u^{(\textsf{i}+1,\textsf{j})}+\left(\tfrac{a^{(0,1)}}{ a}\right)^{(m-\textsf{i},n-\textsf{j})} u^{(\textsf{i},\textsf{j}+1)}\right),
\end{split}
\ee
where $\binom{m}{\textsf{i}}:=\frac{m!}{\textsf{i}!(m-\textsf{i})!}$.
Note that the $x$-derivative (i.e., with respect to $x$) order on the right-hand side of
 \eqref{recursive} is always one order less than that on the left-hand side, i.e., though $u^{(m+2,n)}$ on the left-hand side of \eqref{recursive} has the $x$-derivative order $m+2$, all the derivatives $u^{(p,q)}$ on the right-hand side of \eqref{recursive} satisfying $p<m+2$ and $p+q\le m+n+2$.
We now define several index sets $\ind_{M+1}, \ind_{M+1}^{1}, \ind_{M+1}^{2}$, which are employed throughout the whole paper.
For $M+1\in \NN:=\N\cup\{0\}$, we define
\be \label{Sk}
\ind_{M+1}:=\{ (m,n)\in \NN^2 \; : \;  m+n\le M+1 \}, \qquad M+1\in \NN,
\ee
\be \label{indV12}
\ind_{M+1}^{ 2}:=\ind_{M+1}\setminus \ind_{M+1}^{ 1}\quad \mbox{with}\quad
\ind_{M+1}^{ 1}:=\{ (m,n)\in \ind_{M+1} \; : \; m=0,1\}.
\ee
Recursively applying \eqref{recursive} by reducing the derivative orders with respect to $x$ to less than $2$, we have
\be  \label{upq1}
\begin{split}
u^{(p,q)}=\sum_{(m,n)\in \ind_{p+q}^1}
a^{u}_{p,q,m,n}u^{(m,n)} +\sum_{(m,n)\in \ind_{p+q-2}}
	a^{f}_{p,q,m,n}f^{(m,n)} ,\qquad \forall\;
(p,q)\in \ind_{M+1},
\end{split}
\ee
where  $a^{u}_{p,q,m,n}$, $a^{f}_{p,q,m,n}$ are uniquely determined by $\{a^{(i,j)}: (i,j) \in \ind_{M}\}$ with $a^u_{p,q,m,n}=0$ for all $(m,n)\not\in \ind^1_{p+q}$ and $a^f_{p,q,m,n}=0$ for all $(m,n)\not\in \ind_{p+q-2}$,
and can be obtained uniquely by a recursive procedure using \eqref{recursive} as follows:
\be\label{recursive:au:initial}
\begin{split}
 a^u_{p,q,m,n}=\delta(m-p)\delta(n-q) \quad \text{if} \quad (p,q)\in \ind^1_{M+1}, \quad \text{and} \quad   a^u_{p,q,m,n}=0 \quad \text{if} \quad (m,n)\not\in \ind^1_{p+q},
\end{split}
\ee
\be \label{recursive:au}
\begin{split}
	& a^u_{p,q+1,m,n}=(a^u_{p,q,m,n})^{(0,1)}+a^u_{p,q,m,n-1},\\
	&a^u_{p+1,q,m,n}=
	\begin{cases}
	(a^u_{p,q,m,n})^{(1,0)}
	-a^u_{p,q,m+1,n-2}-\chi_{[1,p+q]}(n)
	\sum _{s=0}^{p+q-n} \binom{s+n-1}{n-1} a^u_{p,q,1,s+n-1} (\tfrac{a_y}{a})^{(0,s)},
		&\text{if } m=0,\\
	(a^u_{p,q,m,n})^{(1,0)}
	+a^u_{p,q,m-1,n}
	-\chi_{[0,p+q-1]}(n)\sum _{s=0}^{p+q-1-n} \binom{s+n}{n} a^u_{p,q,1,s+n} (\tfrac{a_x}{a})^{(0,s)}, &\text{if } m=1,\\
	\end{cases}
\end{split}
\ee
where $\delta(0)=1$, and $\delta(m)=0$ if $m\ne 0$, $\chi_{[1,p+q]}(n)=1$ if $1\le n \le p+q$, and $\chi_{[1,p+q]}(n)=0$ if $n<1$ or $n>p+q$,
and the recursive formula for $a^f_{p,q,m,n}$ is similar. See \cref{regular:interior:umn} for an illustration of \eqref{upq1} with $M=6$. By a direct calculation,  \eqref{upq1} can be rewritten as
	\be  \label{upq2}
	\begin{split}
		&u^{(p,q)}
		=(-1)^{\lfloor\frac{p}{2}\rfloor}
		u^{(\odd(p),q+p-\odd(p))}+
		\sum_{(m,n)\in \ind_{p+q-1}^1}
		a^{u}_{p,q,m,n}u^{(m,n)}  +\sum_{(m,n)\in \ind_{p+q-2}}
		a^{f}_{p,q,m,n}f^{(m,n)},\\
		& \text{for any }(p,q)\in \ind_{M+1}^2, \quad \text{and trivially} \quad u^{(p,q)}=u^{(p,q)}  \text{ for any } (p,q)\in \ind_{M+1}^1,
	\end{split}
	\ee
where $\odd(p)=1$ if $p$ is odd, $\odd(p)=0$ if $p$ is even, and the floor function $\lfloor x\rfloor$ is the largest integer less than or equal to $x\in \R$.
\begin{figure}[h]
	\centering	
	\begin{subfigure}[b]{0.3\textwidth}
		\hspace{-5mm}
		\begin{tikzpicture}[scale = 0.45]
			\node (A) at (0,7) {$u^{(0,0)}$};
			\node (A) at (0,6) {$u^{(0,1)}$};
			\node (A) at (0,5) {$u^{(0,2)}$};
			\node (A) at (0,4) {$u^{(0,3)}$};
			\node (A) at (0,3) {$u^{(0,4)}$};
			\node (A) at (0,2) {$u^{(0,5)}$};
			\node (A) at (0,1) {$u^{(0,6)}$};
			\node (A) at (0,0) {$u^{(0,7)}$};
			\node (A) at (2,7) {$u^{(1,0)}$};
			\node (A) at (2,6) {$u^{(1,1)}$};
			\node (A) at (2,5) {$u^{(1,2)}$};
			\node (A) at (2,4) {$u^{(1,3)}$};
			\node (A) at (2,3) {$u^{(1,4)}$};
			\node (A) at (2,2) {$u^{(1,5)}$};
			\node (A) at (2,1) {$u^{(1,6)}$};	
			\node (A) at (4,7) {$u^{(2,0)}$};
			\node (A) at (4,6) {$u^{(2,1)}$};
			\node (A) at (4,5) {$u^{(2,2)}$};
			\node (A) at (4,4) {$u^{(2,3)}$};
			\node (A) at (4,3) {$u^{(2,4)}$};
			\node (A) at (4,2) {$u^{(2,5)}$};
			\node (A) at (6,7) {$u^{(3,0)}$};
			\node (A) at (6,6) {$u^{(3,1)}$};
			\node (A) at (6,5) {$u^{(3,2)}$};
			\node (A) at (6,4) {$u^{(3,3)}$};
			\node (A) at (6,3) {$u^{(3,4)}$};
			\node (A) at (8,7) {$u^{(4,0)}$};
			\node (A) at (8,6) {$u^{(4,1)}$};
			\node (A) at (8,5) {$u^{(4,2)}$};
			\node (A) at (8,4) {$u^{(4,3)}$};
			\node (A) at (10,7) {$u^{(5,0)}$};
			\node (A) at (10,6) {$u^{(5,1)}$};
			\node (A) at (10,5) {$u^{(5,2)}$};
			\node (A) at (12,7) {$u^{(6,0)}$};
			\node (A) at (12,6) {$u^{(6,1)}$};
			\node (A) at (14,7) {$u^{(7,0)}$};
			\draw[ultra thick, blue][->] (7,3) to[left] (18,3);     	
		\end{tikzpicture}
	\end{subfigure}
	\begin{subfigure}[b]{0.3\textwidth}
		\hspace{2.5cm}
		\begin{tikzpicture}[scale = 0.45]
			\node (A) at (0,7) {$u^{(0,0)}$};
			\node (A) at (0,6) {$u^{(0,1)}$};
			\node (A) at (0,5) {$u^{(0,2)}$};
			\node (A) at (0,4) {$u^{(0,3)}$};
			\node (A) at (0,3) {$u^{(0,4)}$};
			\node (A) at (0,2) {$u^{(0,5)}$};
			\node (A) at (0,1) {$u^{(0,6)}$};
			\node (A) at (0,0) {$u^{(0,7)}$};
			\node (A) at (2,7) {$u^{(1,0)}$};
			\node (A) at (2,6) {$u^{(1,1)}$};
			\node (A) at (2,5) {$u^{(1,2)}$};
			\node (A) at (2,4) {$u^{(1,3)}$};
			\node (A) at (2,3) {$u^{(1,4)}$};
			\node (A) at (2,2) {$u^{(1,5)}$};
			\node (A) at (2,1) {$u^{(1,6)}$};	      	
		\end{tikzpicture}
	\end{subfigure}
	\caption
	{The illustration for \eqref{upq1}--\eqref{u:approx} with $M=6$. }
	\label{regular:interior:umn}
\end{figure}
For the sake of presentation, we plug $(x,y)=(x_i^*,y_j^*)\in \overline{\Omega}$ into \eqref{ufmn}, i.e., we use the following abbreviation
notations in the rest of this paper:
\be\label{ufmn2}
a^{(m,n)}:=\frac{\partial^{m+n} a}{ \partial^m x \partial^n y}(x_i^*,y_j^*),\qquad
u^{(m,n)}:=\frac{\partial^{m+n} u}{ \partial^m x \partial^n y}(x_i^*,y_j^*),\qquad
f^{(m,n)}:=\frac{\partial^{m+n} f}{ \partial^m x \partial^n y}(x_i^*,y_j^*).
\ee
Using the Taylor approximation at a base point $(x_i^*,y_j^*)\in \overline{\Omega}$, we have
\be \label{u:original}
u(x+x_i^*,y+y_j^*)
=
\sum_{(m,n)\in \ind_{M+1}}
u^{(m,n)}\frac{x^my^n}{m!n!}+\bo(h^{M+2}), \qquad x,y\in (-2h,2h).
\ee
Plugging \eqref{upq2} into \eqref{u:original} and rearranging terms of $u^{(m,n)}$ with $(m,n)\in \ind_{M+1}^{1}$ (see \cref{regular:interior:umn}), we have
\be \label{u:approx}
\begin{split}
u(x+x_i^*,y+y_j^*)
=
&\sum_{(m,n)\in \ind_{M+1}^{1}}
u^{(m,n)} G_{M+1,m,n}(x,y) +\sum_{(m,n)\in \ind_{M-1}}
f^{(m,n)} H_{M+1,m,n}(x,y)+\bo(h^{M+2}),
\end{split}
\ee
for $x,y\in (-2h,2h)$, where
%
%
%
\be\label{Gmn}
G_{M+1,m,n}(x,y):=\sum_{(p,q)\in \ind_{M+1} }\frac{a^{u}_{p,q,m,n}}{p!q!} x^{p} y^{q}=G_{m,n}(x,y)+\sum_{(p,q)\in \ind_{M+1}^{2} \setminus \ind_{m+n}^{2} }\frac{a^{u}_{p,q,m,n}}{p!q!} x^{p} y^{q},
\ee
\be\label{Hmn}
\begin{split}
	G_{m,n}(x,y):=\sum_{\ell=0}^{\lfloor \frac{n}{2}\rfloor}
	\frac{(-1)^\ell x^{m+2\ell} y^{n-2\ell}}{(m+2\ell)!(n-2\ell)!},\qquad
	H_{M+1,m,n}(x,y):=
	\sum_{(p,q)\in \ind_{M+1}^{2}  } \frac{a^{f}_{p,q,m,n}}{p!q!} x^{p} y^{q}.
\end{split}
\ee
%
%
%
%
 In particular, by a direct calculation, we obtain
\be\label{GM100}
G_{M+1,0,0}(x,y):=1 \quad \text{ for all } M+1\in \NN.
\ee
If $a$ is a constant function, then $G_{M+1,m,n}(x,y)=G_{m,n}(x,y)$ in \eqref{Gmn}.

\subsection{The M-matrix property}

Let $\Omega=(l_1,l_2)\times (l_3,l_4)$ and we assume $l_4-l_3=N_0 (l_2-l_1)$ for some $N_0 \in \N$. For any positive integer $N_1\in \N$, we define $N_2:=N_0 N_1$ and so the grid size is  $h:=(l_2-l_1)/N_1=(l_4-l_3)/N_2$.
	Let
	\be \label{xiyj}
	x_i=l_1+i h, \quad i=0,\ldots,N_1, \quad \text{and} \quad y_j=l_3+j h, \quad j=0,\ldots,N_2.
	\ee
We define $(u_h)_{i,j}$ to be the value of the numerical approximation  $u_h$ of the exact solution $u$ of the elliptic interface problem \eqref{Qeques2}, at the grid point $(x_i, y_j)$.
A  9-point compact stencil centered at a grid point $(x_i,y_j)$ contains nine points $(x_i+kh, y_j+\ell h)$ with stencil coefficients $C_{k,\ell}\in \R$ for $k,\ell\in \{-1,0,1\}$. Define
 \be\label{Compact:Set}
	\begin{split}
		& d_{i,j}^+:=\{(k,\ell) \; : \;
		k,\ell\in \{-1,0,1\}, \psi(x_i+kh, y_j+\ell h)> 0\}, \quad \mbox{and}\\
		& d_{i,j}^-:=\{(k,\ell) \; : \;
		k,\ell\in \{-1,0,1\}, \psi(x_i+kh, y_j+\ell h)\le 0\}.
	\end{split}
\ee
	Thus, the interface curve $\Gamma:=\{(x,y)\in \Omega \; :\; \psi(x,y)=0\}$ splits the nine points of the 9-point compact stencil into two disjoint sets  $\{(x_{i+k}, y_{j+\ell})\; : \; (k,\ell)\in d_{i,j}^+\} \subseteq \Op$ and
	$\{(x_{i+k}, y_{j+\ell})\; : \; (k,\ell)\in d_{i,j}^-\} \subseteq \Om \cup \Gamma$. We refer to a grid/center point $(x_i,y_j)$ as
	\emph{a regular point} if  $d_{i,j}^+=\emptyset$ or $d_{i,j}^-=\emptyset$.
	The center grid point $(x_i,y_j)$ of a stencil is \emph{regular} if all of its nine points are in $\Op$ (hence $d_{i,j}^-=\emptyset$) or in $\Om \cup \Gamma$ (i.e., $d_{i,j}^+=\emptyset$).
	Otherwise, if both $d_{i,j}^+$ and $d_{i,j}^-$ are nonempty, the center grid point $(x_i,y_j)$ of a stencil is referred to as \emph{an irregular point}.
	Now, let us pick and fix a base point $(x_i^*,y_j^*)$ inside the open square $(x_i-h,x_i+h)\times (y_j-h,y_j+h)$, which can be written as
	\be \label{base:pt}
	x_i^*=x_i-v_0h  \quad \mbox{and}\quad y_j^*=y_j-w_0h  \quad \mbox{with}\quad
	-1<v_0, w_0<1.
	\ee

We now discuss the M-matrix property.
An M-matrix is a real square matrix with non-positive off-diagonal entries and positive diagonal entries such that all row sums are non-negative with at least one row sum being positive.
The linear system with an M-matrix has the potential to construct the efficient iterative solver and preconditioner to obtain the solution accurately and effectively. Furthermore, an M-matrix is sufficient to guarantee that the corresponding scheme satisfies the discrete maximum principle which is used to prove the convergence rate  theoretically.
To form an M-matrix, we consider following sign and sum conditions for a scheme with stencil coefficients $\{C_{k,\ell}\}$:
\be\label{sign:condition}
\begin{cases}
	C_{k,\ell}>0, &\quad \mbox{if} \quad (k,\ell)=(0,0),\\
	C_{k,\ell}\le  0, &\quad \mbox{if} \quad (k,\ell)\ne(0,0),
\end{cases}
\ee
and
\be\label{sum:condition}
\sum_{k}\sum_{\ell} C_{k,\ell}\ge 0.
\ee
If all the stencil coefficients $C_{k,\ell}$ are polynomials (in terms of $h$) of degree at most $M+1\in \NN$:
	\be\label{Ckell}
	C_{k,\ell}:=\sum_{p=0}^{M+1} c_{k,\ell,p}h^p \quad\mbox{ with }\quad  c_{k,\ell,p}\in \R,
	\ee
then $\{C_{k,\ell}\}$
 satisfies the sign condition \eqref{sign:condition} for any mesh size $h>0$ if
\be\label{suff:nece:sign:cond}
\begin{cases}
	c_{k,\ell,0}> 0, &\quad \mbox{if} \quad (k,\ell)=(0,0),\\
	c_{k,\ell,0}\le  0, &\quad \mbox{if} \quad (k,\ell)\ne(0,0),
\end{cases}
\quad
\begin{cases}
	c_{k,\ell,p}\ge 0, &\quad \mbox{if} \quad (k,\ell)=(0,0),\\
	c_{k,\ell,p}\le  0, &\quad \mbox{if} \quad (k,\ell)\ne(0,0),
\end{cases}
\quad p=1,\dots, M+1,
\ee
and
$\{C_{k,\ell}\}$  satisfies the sum condition \eqref{sum:condition} for any mesh size $h>0$ if
\be\label{suff:nece:sum:cond}
\sum_{k}\sum_{\ell} c_{k,\ell,p} \ge 0,
\quad p=0,\dots, M+1.
\ee
In this paper, we say that the $\{C_{k,\ell}\}$ in \eqref{Ckell} is nontrivial if $c_{k,\ell,0}\ne0$ for at least one choice of $k,\ell$.
Under suitable boundary conditions such that at least one sum in \eqref{sum:condition} satisfies $\sum_{k}\sum_{\ell} C_{k,\ell}>0$ for any $h$ (such as the Dirichlet boundary condition is imposed on at least one grid point $(x_i,y_j)\in \partial \Omega$ implies $\sum_{k}\sum_{\ell} C_{k,\ell}=1$ for any $h$ on $(x_i,y_j)$),
it is well known that \eqref{suff:nece:sign:cond} and \eqref{suff:nece:sum:cond} together  guarantee the resulting coefficient matrix to be an M-matrix for any $h>0$.
For the sake of better readability, all technical proofs of \cref{sec:sixord} are provided in \cref{proof:Appendix}.


	\subsection{$9$-point compact stencils at regular points (interior)}
	\label{subsec:regular}
	In this subsection, we discuss how to construct 9-point FDMs with the sixth-order consistency and satisfying the M-matrix property for any $h>0$ for interior regular points.
 We choose $(x_i^*,y_j^*)$ to be the center point of the 9-point scheme, i.e., $(x_i^*,y_j^*)=(x_i,y_j)$ and $v_0=w_0=0$ in \eqref{base:pt}.
\begin{figure}[htbp]
	\begin{subfigure}[b]{0.3\textwidth}
			\hspace{-0.6cm}
		\begin{tikzpicture}[scale = 3.5]
			\draw[help lines,step = 0.5]
			(-1/2,-1/2) grid (1/2,1/2);
			\node at (-0.5,0.5)[circle,fill,inner sep=2pt,color=black]{};
			\node at (-0.5,0)[circle,fill,inner sep=2pt,color=black]{};
			\node at (-0.5,-0.5)[circle,fill,inner sep=2pt,color=black]{};
			\node at (0,0.5)[circle,fill,inner sep=2pt,color=black]{};
			\node at (0,0)[circle,fill,inner sep=2pt,color=red]{};
			\node at (0,-0.5)[circle,fill,inner sep=2pt,color=black]{};
			\node at (0.5,0.5)[circle,fill,inner sep=2pt,color=black]{};
			\node at (0.5,0)[circle,fill,inner sep=2pt,color=black]{};
			\node at (0.5,-0.5)[circle,fill,inner sep=2pt,color=black]{};
			\node (A) at (-0.28,0.55) {{{$u_{i-1,j+1}$}}};
			\node (A) at (-0.34,0.05) {{{$u_{i-1,j}$}}};
			\node (A) at (-0.28,-0.45) {{{$u_{i-1,j-1}$}}};
			\node (A) at (0.15,0.55) {{{$u_{i,j+1}$}}};
			\node (A) at (0.09,0.05) {{{$u_{i,j}$}}};
			\node (A) at (0.15,-0.45) {{{$u_{i,j-1}$}}};
			\node (A) at (0.72,0.55) {{{$u_{i+1,j+1}$}}};
			\node (A) at (0.67,0.05) {{{$u_{i+1,j}$}}};
			\node (A) at (0.72,-0.45) {{{$u_{i+1,j-1}$}}};
			\end{tikzpicture}
	\end{subfigure}	
	\begin{subfigure}[b]{0.3\textwidth}
			\hspace{1.6cm}
		\begin{tikzpicture}[scale = 3.5]
			\draw[help lines,step = 0.5]
			(-1/2,-1/2) grid (1/2,1/2);
			\node at (-0.5,0.5)[circle,fill,inner sep=2pt,color=black]{};
			\node at (-0.5,0)[circle,fill,inner sep=2pt,color=black]{};
			\node at (-0.5,-0.5)[circle,fill,inner sep=2pt,color=black]{};
			\node at (0,0.5)[circle,fill,inner sep=2pt,color=black]{};
			\node at (0,0)[circle,fill,inner sep=2pt,color=red]{};
			\node at (0,-0.5)[circle,fill,inner sep=2pt,color=black]{};
			\node at (0.5,0.5)[circle,fill,inner sep=2pt,color=black]{};
			\node at (0.5,0)[circle,fill,inner sep=2pt,color=black]{};
			\node at (0.5,-0.5)[circle,fill,inner sep=2pt,color=black]{};
			\node (A) at (-0.33,0.55) {{{$C_{-1,1}$}}};
			\node (A) at (-0.33,0.05) {{{$C_{-1,0}$}}};
			\node (A) at (-0.28,-0.45) {{{$C_{-1,-1}$}}};
			\node (A) at (0.12,0.55) {{{$C_{0,1}$}}};
			\node (A) at (0.12,0.05) {{{$C_{0,0}$}}};
			\node (A) at (0.16,-0.45) {{{$C_{0,-1}$}}};
			\node (A) at (0.63,0.55) {{{$C_{1,1}$}}};
			\node (A) at (0.63,0.05) {{{$C_{1,0}$}}};
			\node (A) at (0.66,-0.45) {{{$C_{1,-1}$}}};
		\end{tikzpicture}
	\end{subfigure}
	\caption
	{The illustration for the 9-point scheme in \eqref{stencil:regular} of \cref{thm:regular:interior}.}
	\label{Ckl:9:points}
\end{figure}
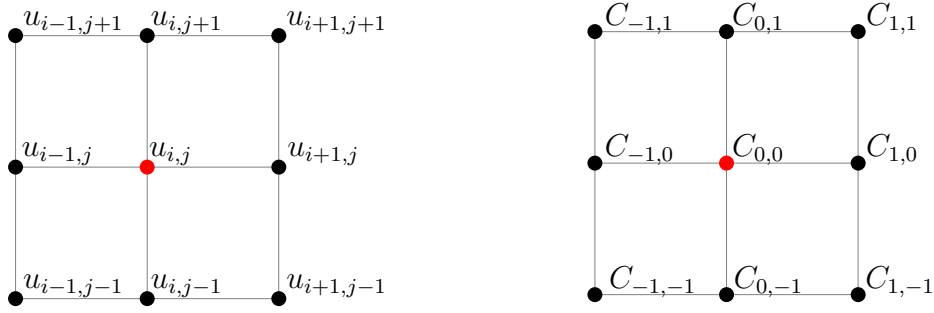

	 \begin{theorem}\label{thm:regular:interior}
Let a grid point $(x_i,y_j)$ be a regular point and $(x_i,y_j) \notin \partial \Omega$ and $(x_i^*,y_j^*)=(x_i,y_j)$.
	Then the following 9-point scheme centered at $(x_{i},y_j)$ (see \cref{Ckl:9:points}):
		\be\label{stencil:regular}
\begin{split}
	h^{-2}	\mathcal{L}_h u_h :=
&h^{-2} \sum_{k=-1}^{1}\sum_{\ell =-1}^{1} C_{k,\ell}(u_{h})_{i+k,j+\ell}
=h^{-2}\sum_{(m,n)\in \ind_{5}} f^{(m,n)} \sum_{k=-1}^1\sum_{\ell=-1}^1 C_{k,\ell} H_{7,m,n}(kh, \ell h),
\end{split}
		\ee
		achieves the sixth-order consistency for $-\nabla \cdot( a\nabla u)=f$ at the point $(x_i,y_j)$, where $\{c_{k,\ell,p}\}_{-1\le k,\ell\le 1, 0\le p\le 7}$ in the stencil coefficients $C_{k,\ell}:=\sum_{p=0}^{7} c_{k,\ell,p}h^p$
 is any nontrivial solution of
				 \be\label{Thm:regular:Equation:Form}
\begin{split}
& \sum_{k=-1}^1 \sum_{\ell=-1}^1  c_{k,\ell,d} G_{m,n}(k,\ell )=-\sum_{k=-1}^1 \sum_{\ell=-1}^1 \sum_{s=0}^{d-1} c_{k,\ell,s}
A^u_{k,\ell,m,n,s},\\
&\mbox{with} \quad
A^u_{k,\ell,m,n,s}:=\sum_{ \substack{(p,q)\in \ind_{7}^2 \setminus \ind_{m+n}^2 \\p+q=m+n+d-s}}\frac{a^{u}_{p,q,m,n}}  {p!q!} k^{p} \ell ^{q}\qquad\quad
\mbox{ for all } (m,n)\in \ind_{7-d}^1,
		\end{split}
		\ee	
for all $d=0,\ldots, 7$, where $ G_{m,n}$ and $ H_{7,m,n}$ are defined in  \eqref{Hmn}, $a^{u}_{p,q,m,n}$ can be computed through \eqref{recursive:au:initial}--\eqref{recursive:au}.
By the symbolic calculation,
 the linear system in \eqref{Thm:regular:Equation:Form} always has nontrivial solutions such that
\begin{itemize}
	\item[(i)]   All nontrivial solutions of  \eqref{Thm:regular:Equation:Form}  satisfy the sum condition \eqref{sum:condition} for any mesh size $h>0$;
	\item[(ii)] There must exist a nontrivial solution of  \eqref{Thm:regular:Equation:Form}   such that
	$\{C_{k,\ell}\}_{k,\ell=-1,0,1}$  satisfies the sign condition \eqref{sign:condition} for any mesh size $h>0$.
\end{itemize}
\end{theorem}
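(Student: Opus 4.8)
The plan is to verify, in order, three things: (a) the linear system \eqref{Thm:regular:Equation:Form} has nontrivial solutions for each cumulative-degree level $d$; (b) every such solution satisfies the sum condition for all $h>0$; (c) among them there is one that also satisfies the sign condition for all $h>0$. The overall architecture is a consequence-counting argument combined with a small explicit symbolic computation, since the theorem itself advertises that the verification is done ``by the symbolic calculation.''

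\medskip
\noindent\textbf{Step 1: Reduce to a linear-algebra statement about the $9$ unknowns $c_{k,\ell,0}$ and then propagate.} First I would observe that \eqref{Thm:regular:Equation:Form} is block-triangular in the degree index $d$: the equations at level $d$ determine $\{c_{k,\ell,d}\}_{-1\le k,\ell\le 1}$ in terms of the lower-order coefficients $\{c_{k,\ell,s}\}_{s<d}$, because the right-hand side only involves $s\le d-1$. So it suffices to: (i) solve the homogeneous level-$0$ system $\sum_{k,\ell} c_{k,\ell,0}\,G_{m,n}(k,\ell)=0$ for $(m,n)\in\ind_7^1$; and (ii) check that for each $d=1,\dots,7$ the (inhomogeneous) level-$d$ system in the $9$ unknowns $\{c_{k,\ell,d}\}$ is consistent given any fixed choice at lower levels. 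For (i), the relevant functions $G_{m,n}$ with $m\in\{0,1\}$ and $m+n\le 7$ number $\#\ind_7^1 = 8+7 = 15$, but evaluated on the $9$-point stencil $\{-1,0,1\}^2$ they span a space of dimension at most $9$; the key point is that the number of \emph{linearly independent} constraints among the $G_{m,n}(\cdot,\cdot)$ restricted to the stencil is exactly $8$, leaving a $1$-dimensional solution space for $c_{k,\ell,0}$. This is the classical fact underlying $9$-point compact schemes for second-order elliptic operators (the compact stencil kills all but one relation), and by \eqref{GM100} the functional $\sum c_{k,\ell,0}$ is proportional to the $G_{0,0}$-coefficient, which can be normalized to be the interior row. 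For (ii), consistency at each higher level follows from the \emph{same} rank count: the matrix on the left of \eqref{Thm:regular:Equation:Form} at level $d$ has rows indexed by $(m,n)\in\ind_{7-d}^1$, whose number drops as $d$ grows, so the left-hand map from $\R^9$ stays surjective onto the span of those rows; hence for \emph{any} right-hand side the level-$d$ system is solvable. I would make the rank claim precise by exhibiting the Vandermonde-type structure of the evaluation matrix $[G_{m,n}(k,\ell)]$ and its kernel.

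\medskip
\noindent\textbf{Step 2: The sum condition, claim (i).} Summing \eqref{Thm:regular:Equation:Form} over the special index $(m,n)=(0,0)$ for each $d$: since $G_{0,0}(k,\ell)\equiv 1$ by \eqref{Hmn}--\eqref{GM100}, the left side is $\sum_{k,\ell} c_{k,\ell,d}$, and the right side is $-\sum_{k,\ell}\sum_{s<d} c_{k,\ell,s} A^u_{k,\ell,0,0,s}$. The crucial observation is that $A^u_{k,\ell,0,0,s}$ sums $a^u_{p,q,0,0}/(p!q!)\,k^p\ell^q$ over $(p,q)\in\ind_7^2\setminus\ind_0^2$ with $p+q=d-s$; but from the recursion \eqref{recursive:au:initial}--\eqref{recursive:au} one checks $a^u_{p,q,0,0}=0$ whenever $p+q\ge 1$ (the $(0,0)$-coefficient of $u^{(p,q)}$ expanded in lower-$x$-order derivatives carries no zeroth-order term for $(p,q)\ne(0,0)$, because every application of \eqref{recursive} strictly lowers $m+n$ in the $u$-part or introduces an $f$-term). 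Hence $A^u_{k,\ell,0,0,s}=0$ for all $s<d$, so $\sum_{k,\ell} c_{k,\ell,d}=0$ for all $d=1,\dots,7$, and the level-$0$ identity gives $\sum_{k,\ell} c_{k,\ell,0}=0$ as well (the $(0,0)$ row forces it). Therefore every nontrivial solution has $\sum_k\sum_\ell c_{k,\ell,p}=0\ge 0$ for all $p$, which is \eqref{suff:nece:sum:cond}; by the boilerplate just before the theorem this yields the sum condition \eqref{sum:condition} for all $h>0$. (If instead some $\sum c_{k,\ell,p}$ turns out to be a strictly positive constant, the inequality still holds; either way claim (i) is immediate.)

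\medskip
\noindent\textbf{Step 3: The sign condition, claim (ii), and the main obstacle.} This is the genuinely computational part and the step I expect to be the bottleneck. Having pinned down the $1$-dimensional solution space at level $0$, I would fix the normalization so that $c_{0,0,0}>0$ and verify that the resulting $c_{k,\ell,0}$ for $(k,\ell)\ne(0,0)$ are $\le 0$ --- for the standard compact $9$-point Laplacian-type stencil these are the familiar weights $c_{0,0,0}=\tfrac{10}{3}$ (say), edge weights $-\tfrac{2}{3}$, corner weights $-\tfrac16$ up to scaling, which are manifestly of the right sign; the presence of the variable coefficient $a$ only enters at levels $d\ge 1$ through \eqref{recursive:au} (since $a^u_{p,q,m,n}$ at $d=0$ reduces to $\delta(m-p)\delta(n-q)$). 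Then, level by level for $d=1,\dots,7$, I would solve the (now inhomogeneous, but still rank-deficient) level-$d$ system, using the one free parameter to push the off-diagonal $c_{k,\ell,d}$ to be $\le 0$ and $c_{0,0,d}\ge 0$; because the right-hand side at level $d$ is a fixed polynomial in the derivatives $a^{(i,j)}$ and the already-chosen lower $c$'s, one must show the feasible region is nonempty. The honest difficulty is that, unlike the constant-coefficient case, the higher-order corrections $A^u_{k,\ell,m,n,s}$ depend on the unknown local values $a^{(i,j)}$, so ``there exists a sign-feasible solution'' is a statement that must hold \emph{uniformly over all admissible $a$}; the mechanism that saves us is that the free direction in each level-$d$ kernel is (a scalar multiple of) the same compact null vector as at level $0$ --- namely the vector with $+$ on the center, $0$ nowhere, and the mixed $-\tfrac16,-\tfrac23$ pattern on the ring --- so adding a large positive multiple of it to any particular solution simultaneously increases $c_{0,0,d}$ and decreases every off-diagonal $c_{k,\ell,d}$, eventually landing in the sign-feasible cone regardless of the $a$-dependent inhomogeneity. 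I would therefore: (1) record the explicit null vector $\nu=(\nu_{k,\ell})$ with $\nu_{0,0}>0$ and $\nu_{k,\ell}<0$ for $(k,\ell)\ne(0,0)$; (2) at each level $d$ write the general solution as $c^{(d)}=c^{(d)}_{\mathrm{part}}+t_d\nu$; (3) note the sign inequalities \eqref{suff:nece:sign:cond} for the entries of $c^{(d)}$ are satisfied once $t_d$ is chosen large enough, but one must also respect that the \emph{combined} polynomial $C_{k,\ell}=\sum_p c_{k,\ell,p}h^p$ be nontrivial, which is guaranteed since $c_{0,0,0}>0$ already. Carrying out (2)--(3) is where the heavy symbolic bookkeeping lives --- it is exactly what the phrase ``by the symbolic calculation'' in the theorem statement is pointing at --- and I would present it as a finite, machine-checkable computation rather than grinding it out by hand.
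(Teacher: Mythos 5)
Your overall architecture---the block-triangular recursion in the degree index $d$, the sum condition extracted from the $(m,n)=(0,0)$ row via \eqref{GM100}, and the sign condition obtained by translating a particular level-$d$ solution along the level-$0$ null vector $(-1,-4,-1,-4,20,-4,-1,-4,-1)$---is essentially the paper's own proof (compare \eqref{Matrix:Form:1}, \eqref{regular:sum:0}, and \eqref{ckl0:degree:0}--\eqref{ckl7:degree:7:interval}). Steps 2 and 3 are sound: $A^u_{k,\ell,0,0,s}=0$ does follow from \eqref{GM100} (equivalently, $a^u_{p,q,0,0}=0$ for all $(p,q)\ne(0,0)$, since $u^{(0,0)}$ never appears on the right-hand side of \eqref{recursive}), and since every $A_d$ in \eqref{A1:A7:regular} is a row-submatrix of $A_0$, its kernel contains the level-$0$ null vector; adding a sufficiently large positive multiple of the vector with $+20$ at the center and $-4,-1$ on the ring pushes any particular solution into the sign cone \eqref{suff:nece:sign:cond} while preserving the zero row sums, which is exactly the mechanism behind the paper's choices of the free parameters $c_{1,1,d}$.

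The genuine gap is in Step 1(ii), the solvability of the inhomogeneous level-$d$ systems. You assert that ``for any right-hand side the level-$d$ system is solvable'' because the left-hand map ``stays surjective onto the span of those rows.'' This does not work: $A_1=A_0(1:13,:)$ is a $13\times 9$ matrix of rank $8$ (its kernel is one-dimensional, spanned by the same null vector as that of $A_0$, as \eqref{ckl1:degree:1} shows), so the map $C_1\mapsto A_1C_1$ has an $8$-dimensional range inside $\R^{13}$ and is nowhere near surjective. Solvability of $A_dC_d=b_d$ therefore imposes nontrivial compatibility conditions on $b_d=-\sum_{s<d}B_{d,s}C_s$ (five of them already at $d=1$), and these hold only because of the specific algebraic structure of the coefficients $A^u_{k,\ell,m,n,s}$ produced by \eqref{recursive:au:initial}--\eqref{recursive:au}; this is precisely what the paper verifies symbolically and records explicitly in \eqref{ckl1:degree:1}--\eqref{ckl7:degree:7}. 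A dimension count cannot substitute for that verification, so you must either carry out the symbolic check or give a structural reason why $b_d$ lies in the range of $A_d$. Separately, your plan never addresses the first assertion of the theorem, namely that a solution of \eqref{Thm:regular:Equation:Form} actually yields sixth-order consistency of \eqref{stencil:regular}; the paper obtains this by inserting \eqref{u:approx} into $\mathcal{L}_h u$ and matching powers of $h$ (see \eqref{Lh:u:regular}--\eqref{regular:separate:hmnd:noh:2}), and some version of that computation is required.
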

\noindent
\textbf{An efficient way to compute $\{C_{k,\ell}\}$ in \cref{thm:regular:interior}:}	
Obviously, the systems of linear equations in \eqref{Thm:regular:Equation:Form} for $d=0,\ldots, 7$ can be equivalently expressed in matrix forms:
\be\label{Thm:regular:Matrix:Form}
\begin{split}
&A_0 C_0=\textbf{0},\quad A_d C_d=-\sum_{s=0}^{d-1} B_{d,s}C_s, \quad \text{for} \quad 1\le d\le 6,
\quad \mbox{and}\quad A_7 C_7=0,\\
&\mbox{with}\quad C_d=\left(c_{-1,-1,d}, c_{-1,0,d}, c_{-1,1,d}, c_{0,-1,d},
c_{0,0,d}, c_{0,1,d}, c_{1,-1,d}, c_{1,0,d}, c_{1,1,d}\right)^{\textsf{T}},\quad d=0,\ldots,7,
\end{split}
\ee
where $B_{d,s}:=[A^u_{k,\ell,m,n,s}]_{(k,\ell)\in \{-1,0,1\}^2, (m,n)\in \ind_{7-d}^1}$ and
the $15\times 9$ matrix $A_0:=[G_{m,n}(k,\ell)]_{(k,\ell)\in \{-1,0,1\}^2, (m,n)\in \ind_7^1}$ in \eqref{Thm:regular:Matrix:Form} is given by
 {\tiny{
 		\be\label{A0:Regular}
 		A_0=\begin{pmatrix}
 1& 1& 1& 1& 1& 1& 1& 1& 1\\			
 -1& 0& 1& -1& 0& 1& -1& 0& 1\\
 -1& -1& -1& 0& 0& 0& 1& 1& 1\\
 0& -1/2& 0& 1/2& 0& 1/2& 0& -1/2& 0\\
 1& 0& -1& 0& 0& 0& -1& 0& 1\\			
1/3& 0& -1/3& -1/6& 0& 1/6& 1/3& 0& -1/3\\
-1/3& 1/6& -1/3& 0& 0& 0& 1/3& -1/6& 1/3\\
-1/6& 1/24& -1/6& 1/24& 0& 1/24& -1/6& 1/24& -1/6\\
0 & 0& 0 & 0& 0 & 0&  0& 0 & 0 \\
1/30& 0& -1/30& -1/120& 0& 1/120& 1/30& 0& -1/30\\
 1/30& -1/120& 1/30& 0& 0& 0& -1/30& 1/120& -1/30\\
 0& -1/720& 0& 1/720& 0& 1/720& 0& -1/720& 0\\
 -1/90& 0& 1/90& 0& 0& 0& 1/90& 0& -1/90\\
 -1/630& 0& 1/630& -1/5040& 0& 1/5040& -1/630& 0& 1/630\\
1/630& 1/5040& 1/630& 0& 0& 0& -1/630& -1/5040& -1/630\\
 		\end{pmatrix},
 		\ee}}	
and all other matrices $A_1,\ldots, A_7$ are sub-matrices of $A_0$ by deleting some rows of $A_0$ as follows:
 \be\label{A1:A7:regular}
 \begin{split}
&
A_1=A_0(1:13,:),\quad A_2=A_0(1:11,:),\quad A_3=A_0(1:9,:),\quad
  A_4=A_0(1:7,:),\qquad\\
&A_5=A_0(1:5,:), \quad\; A_6=A_0(1:3,:), \qquad A_7=A_0(1,:)=(1,\ldots,1),
  \end{split}
 \ee
where the submatrix $A_0(1:n,:)$ consists of the first $n$ rows of $A_0$.
 All nontrivial solutions of $A_0C_0=\textbf{0}$ in \eqref{Thm:regular:Matrix:Form} are given by $c_{-1,-1,0}=c_{-1,1,0}=c_{1,-1,0}=c_{1,1,0}$, $c_{-1,0,0}= c_{1,0,0}= c_{0,-1,0}=c_{0,1,0}=4c_{1,1,0}$, $c_{0,0,0}=-20c_{1,1,0}$ with the free parameter $c_{1,1,0}\in \R\backslash \{0\}$. We simply choose the trivial solution $C_7=\textbf{0}$ of $A_7C_7=0$ in \eqref{Thm:regular:Matrix:Form} so that all the nine stencil coefficients $\{C_{k,\ell}\}$ are polynomials (in terms of $h$) of degree at most $6$.

\noindent
\textbf{Stencil coefficients $\{C_{k,\ell}\}$ in \cref{thm:regular:interior} forming an M-matrix:} Because the solutions of $\{C_{k,\ell}\}_{k,\ell=-1,0,1}$ to \eqref{Thm:regular:Matrix:Form} are not unique, for our numerical experiments and for achieving the M-matrix property, we set some free parameters in $\{c_{k,\ell,d}\}$ in advance for uniqueness as follows:
\begin{enumerate}
\item[(S1)] Set $C_7=\textbf{0}$, pick a particular solution $C_0$ of $A_0C_0=\textbf{0}$ in
\eqref{Thm:regular:Matrix:Form} by
\[ c_{-1,-1,0}=c_{-1,1,0}=c_{1,-1,0}=c_{1,1,0}=-1, \quad  c_{-1,0,0}= c_{1,0,0}= c_{0,-1,0}=c_{0,1,0}=-4, \quad c_{0,0,0}=20,
\]
and further reduce some free parameters by artificially imposing
{\small{
\[
c_{1,0,4}=c_{1,1,4}, \quad  c_{0,1,5}=c_{1,-1,5}=c_{1,0,5}=c_{1,1,5}, \quad c_{-1,1,6}=c_{0,1,6}=c_{1,-1,6}=c_{1,0,6}=c_{1,1,6},\quad c_{0,0,6}=-8c_{1,1,6}.
\]}}

\item[(S2)] Recursively obtain $C_d:=\left(c_{-1,-1,d}, c_{-1,0,d}, c_{-1,1,d}, c_{0,-1,d},
c_{0,0,d}, c_{0,1,d}, c_{1,-1,d}, c_{1,0,d}, c_{1,1,d}\right)^{\textsf{T}}$ in the order $d=1,\ldots, 6$ by
solving $A_dC_d=b_d$ in
\eqref{Thm:regular:Matrix:Form} with the free parameter  $c_{1,1,d}\in \R$ and then
choose the free parameter $c_{1,1,d}$ to be the maximum value such that
	\be\label{regular:ckld}
	\begin{cases}
		c_{k,\ell,d}\ge 0, &\quad \mbox{if} \quad (k,\ell)=(0,0),\\
		c_{k,\ell,d}\le  0, &\quad \mbox{if} \quad (k,\ell)\ne(0,0).
	\end{cases}
	\ee
\end{enumerate}
The proof of \cref{thm:regular:interior} in \cref{proof:Appendix} guarantees the existence of  $c_{1,1,d}$ satisfying \eqref{regular:ckld}.
The proof of \cref{thm:regular:interior} further guarantees that the above unique 9-point FDM
must satisfy \eqref{sign:condition} and \eqref{sum:condition} for any mesh size $h>0$ to achieve the M-matrix property and achieve the sixth-order consistency at regular points.
For the special case that $a^{(m,n)}=0$ for all $2\le m+n\le 6$,
all the above nine unique stencil coefficients $C_{k,\ell}:=\sum_{p=0}^{7} c_{k,\ell,p}h^p$ in \cref{thm:regular:interior} are explicitly presented
in \eqref{particular:0}-\eqref{particular:6}
with the sixth-order consistency (but only second order if $a^{(m,n)}\ne 0$ for some $2\le m+n\le 6$) and satisfying the sign condition \eqref{sign:condition} and the sum condition \eqref{sum:condition} for any mesh size $h>0$.

For the sake of better readability,  the corresponding 4-point and 6-point FDMs on the boundary $\partial \Omega$ for various boundary conditions and their proofs are provided in \cref{Appendix:Boundary}.

\subsection{$13$-point stencils at irregular points}\label{hybrid:Irregular:points}

	We now discuss how to construct a $13$-point FDM with the fifth-order consistency at irregular points and derive the recursive solver to obtain stencil coefficients effectively.	
Let $(x_i,y_j)$ be an irregular point (i.e., both $d_{i,j}^+$ and $d_{i,j}^-$ are nonempty, see the left panel of \cref{fig:transfer}) and choose the base point $(x^*_i,y^*_j)\in \Gamma \cap (x_i-h,x_i+h)\times (y_j-h,y_j+h)$.
	By \eqref{base:pt},
	\be \label{base:pt:gamma}
		x_i^*=x_i-v_0h  \quad \mbox{and}\quad y_j^*=y_j-w_0h  \quad \mbox{with}\quad
		-1<v_0, w_0<1 \quad \mbox{and}\quad (x_i^*,y_j^*)\in \Gamma.
	\ee
	Let $a_{\pm}$, $u_{\pm}$ and $f_{\pm}$ represent the diffusion coefficient $a$, the solution $u$ and the source term $f$ in $\Omega_{\pm}$.
	Similarly to \eqref{ufmn2}, we define
	\begin{align*}
		& a_{\pm}^{(m,n)}:=\frac{\partial^{m+n} a_{\pm}}{ \partial^m x \partial^n y}(x^*_i,y^*_j),\qquad u_{\pm}^{(m,n)}:=\frac{\partial^{m+n} u_{\pm}}{ \partial^m x \partial^n y}(x^*_i,y^*_j),\qquad f_{\pm}^{(m,n)}:=\frac{\partial^{m+n} f_{\pm}}{ \partial^m x \partial^n y}(x^*_i,y^*_j).
	\end{align*}
	Similarly to \eqref{u:approx}, we have
	\be
	\begin{split}	 \label{u:approx:ir:key:2}
		u_\pm (x+x_i^*,y+y_j^*)
		& =\sum_{(m,n)\in \ind_{M}^{1}}
		u_\pm^{(m,n)} G^{\pm}_{M,m,n}(x,y) +\sum_{(m,n)\in \ind_{M-2}}
		f_\pm ^{(m,n)} H^{\pm}_{M,m,n}(x,y)+\bo(h^{M+1}),
	\end{split}
    \ee
	for $x,y\in (-2h,2h)$, where $\ind_{M-2}$ and $\ind_{M}^{1}$ are defined in \eqref{Sk} and  \eqref{indV12} respectively, $G^{\pm}_{M,m,n}(x,y)$ and $H^{\pm}_{M,m,n}(x,y)$ are obtained by  replacing $\{a^{(m,n)}: (m,n) \in \ind_{M}\}$ by $\{a_{\pm}^{(m,n)}: (m,n) \in \ind_{M}\}$  and replacing $M$ by $M-1$ in \eqref{Gmn} and \eqref{Hmn}. For the sake of presentation, we can replace $a^{u}_{p,q,m,n}$ by $a^{u_{\pm}}_{p,q,m,n}$ in \eqref{Gmn} and replace $a^{f}_{p,q,m,n}$ by $a^{f_{\pm}}_{p,q,m,n}$  in \eqref{Hmn}.
Near the point $(x_i^*,y_j^*)\in \Gamma$, the parametric equation $(r(t),s(t))$ of the interface $\Gamma$ can be written as:
	\be \label{parametric}
	x=r(t),\quad y=s(t), \quad	 x^*_i=r(t_k^*),\quad y^*_j=s(t_k^*), \quad (r'(t_k^*))^2+(s'(t_k^*))^2\ne0 \quad \mbox{for some } k\in \N,
	\ee
	where $r(t)$ and $s(t)$ are smooth functions. Similarly to the definition of the  9-point compact stencil in \eqref{Compact:Set}, we define the following extra 4-point set for the 13-point scheme (see \cref{Ckl:13:points}):
	 \be\label{13point:Set}
	\begin{split}
		& e_{i,j}^+:=\{(k,\ell) \; : \;
		(k,\ell)\in \{(-2,0),(2,0),(0,-2),(0,2)\}, \ \psi(x_i+kh, y_j+\ell h)> 0\},\\
		& e_{i,j}^-:=\{(k,\ell) \; : \;
		(k,\ell)\in  \{(-2,0),(2,0),(0,-2),(0,2)\}, \ \psi(x_i+kh, y_j+\ell h)\le 0\}.
	\end{split}
	\ee

In the next \cref{thm:tranmiss:interface}, we present the transmission equation \eqref{tranmiss:cond} to transfer $u_{-}$ to $u_{+}$ at an irregular point (see \cref{fig:transfer}). Furthermore, we provide some results of the transmission coefficients $T^{u_{+}}_{m',n',m,n}$ in \eqref{tranmiss:cond} which are used to develop an efficient recursive way to obtain the stencil coefficients $\{C_{k,\ell}\}$ of the 13-point scheme with the fifth-order consistency in \cref{thm:13point:scheme}.
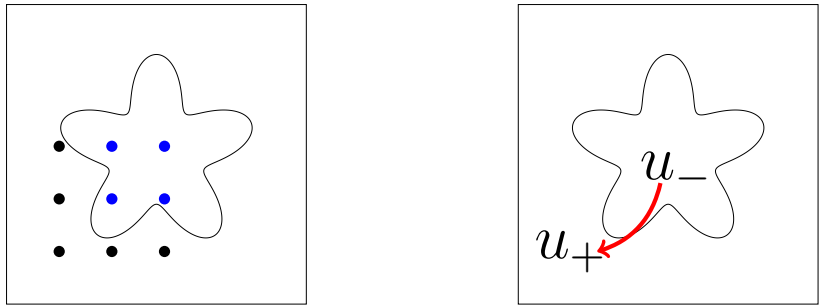
\begin{figure}[htbp]
	\centering	
	\begin{subfigure}[b]{0.3\textwidth}
		\hspace{-0.2cm}
		\begin{tikzpicture}[scale = 0.7]
				\begin{axis}[
				xmin=-3, xmax=3,
				ymin=-3, ymax=3,
				axis equal image,
				xtick=\empty,
				ytick=\empty]
				\addplot[data cs=polar,black,domain=0:360,samples=360,smooth] (x,{(1.5+0.5*sin(5*\x)) });
			\end{axis}
		    \node at (1,3)[circle,fill,inner sep=1.5pt,color=black]{};
			\node at (1,2)[circle,fill,inner sep=1.5pt,color=black]{};
			\node at (1,1)[circle,fill,inner sep=1.5pt,color=black]{};
			\node at (2,3)[circle,fill,inner sep=1.5pt,color=blue]{};
			\node at (2,2)[circle,fill,inner sep=1.5pt,color=blue]{};
			\node at (2,1)[circle,fill,inner sep=1.5pt,color=black]{};
		    \node at (3,3)[circle,fill,inner sep=1.5pt,color=blue]{};
			\node at (3,2)[circle,fill,inner sep=1.5pt,color=blue]{};
			\node at (3,1)[circle,fill,inner sep=1.5pt,color=black]{};
		\end{tikzpicture}
	\end{subfigure}		
	\begin{subfigure}[b]{0.3\textwidth}
		\hspace{1cm}
		\begin{tikzpicture}[scale = 0.7]
				\begin{axis}[
	xmin=-3, xmax=3,
	ymin=-3, ymax=3,
	axis equal image,
	xtick=\empty,
	ytick=\empty]
	\addplot[data cs=polar,black,domain=0:360,samples=360,smooth] (x,{(1.5+0.5*sin(5*\x)) });
\end{axis}

			\node (A)[scale=2] at (1,1) {$u_{+}$};
			\node (A)[scale=2] at (3,2.5) {$u_{-}$};
			\draw[ultra thick, red][->] (2.7,2.3) to[bend left] (1.5,1);
		\end{tikzpicture}
	\end{subfigure}
		\caption
	{An example for an irregular point (left) and the illustration for the transmission equation \eqref{tranmiss:cond} in \cref{thm:tranmiss:interface} (right).}
	\label{fig:transfer}
\end{figure}
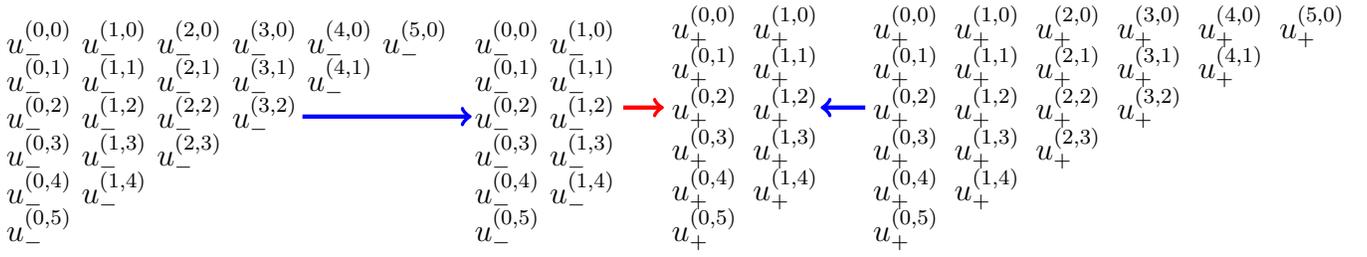
\begin{figure}[h]
	\centering
			\hspace{-30mm}
	\begin{subfigure}[b]{0.23\textwidth}
		\begin{tikzpicture}[scale = 0.50]
			\node (A) at (0,7) {$u_-^{(0,0)}$};
			\node (A) at (0,6) {$u_-^{(0,1)}$};
			\node (A) at (0,5) {$u_-^{(0,2)}$};
			\node (A) at (0,4) {$u_-^{(0,3)}$};
			\node (A) at (0,3) {$u_-^{(0,4)}$};
			\node (A) at (0,2) {$u_-^{(0,5)}$};
			\node (A) at (2,7) {$u_-^{(1,0)}$};
			\node (A) at (2,6) {$u_-^{(1,1)}$};
			\node (A) at (2,5) {$u_-^{(1,2)}$};
			\node (A) at (2,4) {$u_-^{(1,3)}$};
			\node (A) at (2,3) {$u_-^{(1,4)}$};
			\node (A) at (4,7) {$u_-^{(2,0)}$};
			\node (A) at (4,6) {$u_-^{(2,1)}$};
			\node (A) at (4,5) {$u_-^{(2,2)}$};
			\node (A) at (4,4) {$u_-^{(2,3)}$};
			\node (A) at (6,7) {$u_-^{(3,0)}$};
			\node (A) at (6,6) {$u_-^{(3,1)}$};
			\node (A) at (6,5) {$u_-^{(3,2)}$};
			\node (A) at (8,7) {$u_-^{(4,0)}$};
			\node (A) at (8,6) {$u_-^{(4,1)}$};
			\node (A) at (10,7) {$u_-^{(5,0)}$};
			\draw[ultra thick, blue][->] (7,5) to[left] (11.5,5);     	
		\end{tikzpicture}
	\end{subfigure}
	\begin{subfigure}[b]{0.23\textwidth}
		\hspace{18mm}
		\begin{tikzpicture}[scale = 0.50]
			\node (A) at (0,7) {$u_-^{(0,0)}$};
			\node (A) at (0,6) {$u_-^{(0,1)}$};
			\node (A) at (0,5) {$u_-^{(0,2)}$};
			\node (A) at (0,4) {$u_-^{(0,3)}$};
			\node (A) at (0,3) {$u_-^{(0,4)}$};
			\node (A) at (0,2) {$u_-^{(0,5)}$};
			\node (A) at (2,7) {$u_-^{(1,0)}$};
			\node (A) at (2,6) {$u_-^{(1,1)}$};
			\node (A) at (2,5) {$u_-^{(1,2)}$};
			\node (A) at (2,4) {$u_-^{(1,3)}$};
			\node (A) at (2,3) {$u_-^{(1,4)}$};	      	
		\end{tikzpicture}
		\end{subfigure}
			\hspace{-4mm}
		\begin{subfigure}[b]{0.23\textwidth}
		\begin{tikzpicture}[scale = 0.54]
			\node (A) at (0,7) {$u_+^{(0,0)}$};
			\node (A) at (0,6) {$u_+^{(0,1)}$};
			\node (A) at (0,5) {$u_+^{(0,2)}$};
			\node (A) at (0,4) {$u_+^{(0,3)}$};
			\node (A) at (0,3) {$u_+^{(0,4)}$};
			\node (A) at (0,2) {$u_+^{(0,5)}$};
			\node (A) at (2,7) {$u_+^{(1,0)}$};
			\node (A) at (2,6) {$u_+^{(1,1)}$};
			\node (A) at (2,5) {$u_+^{(1,2)}$};
			\node (A) at (2,4) {$u_+^{(1,3)}$};
			\node (A) at (2,3) {$u_+^{(1,4)}$};	
				\draw[ultra thick, red][->] (-2,5) to[left] (-1,5);     	
		\end{tikzpicture}
	\end{subfigure}
			\hspace{-18mm}
		\begin{subfigure}[b]{0.23\textwidth}
		\begin{tikzpicture}[scale = 0.54]
			\node (A) at (0,7) {$u_+^{(0,0)}$};
			\node (A) at (0,6) {$u_+^{(0,1)}$};
			\node (A) at (0,5) {$u_+^{(0,2)}$};
			\node (A) at (0,4) {$u_+^{(0,3)}$};
			\node (A) at (0,3) {$u_+^{(0,4)}$};
			\node (A) at (0,2) {$u_+^{(0,5)}$};
			\node (A) at (2,7) {$u_+^{(1,0)}$};
			\node (A) at (2,6) {$u_+^{(1,1)}$};
			\node (A) at (2,5) {$u_+^{(1,2)}$};
			\node (A) at (2,4) {$u_+^{(1,3)}$};
			\node (A) at (2,3) {$u_+^{(1,4)}$};
			\node (A) at (4,7) {$u_+^{(2,0)}$};
			\node (A) at (4,6) {$u_+^{(2,1)}$};
			\node (A) at (4,5) {$u_+^{(2,2)}$};
			\node (A) at (4,4) {$u_+^{(2,3)}$};
			\node (A) at (6,7) {$u_+^{(3,0)}$};
			\node (A) at (6,6) {$u_+^{(3,1)}$};
			\node (A) at (6,5) {$u_+^{(3,2)}$};
			\node (A) at (8,7) {$u_+^{(4,0)}$};
			\node (A) at (8,6) {$u_+^{(4,1)}$};
			\node (A) at (10,7) {$u_+^{(5,0)}$};
			\draw[ultra thick, blue][->] (-1,5) to[left] (-2.1,5);     	
		\end{tikzpicture}
	\end{subfigure}
	\caption
	{The illustration for the transmission equation \eqref{tranmiss:cond} in \cref{thm:tranmiss:interface} with $M=5$.}
	\label{fig:transfer:detail}
\end{figure}
	 \begin{theorem}\label{thm:tranmiss:interface}
		Let $u$ be the solution to the elliptic interface problem \eqref{Qeques2} and let  $\Gamma$ be parameterized near $(x_i^*,y_j^*)$ by \eqref{parametric}.
		Then the transmission equation for $u_{\pm}$ at $(x_i^*,y_j^*)$ (see \cref{fig:transfer,fig:transfer:detail}) is
\be \label{tranmiss:cond}
\begin{split}
	u_-^{(m',n')}&=\sum_{  \substack{ (m,n)\in \ind_{M}^{1} \\  m+n \le m'+n'} }T^{u_{+}}_{m',n',m,n}u_+^{(m,n)}+\sum_{(m,n)\in \ind_{M-2}} \left(T^+_{m',n',m,n} f_+^{(m,n)}
	+ T^-_{m',n',m,n} f_{-}^{(m,n)}\right)\\
	&+\sum_{p=0}^M T^{g}_{m',n',p} g^{(p)}
	+\sum_{p=0}^{M-1} T^{g_\Gamma}_{m',n',p} g_{\Gamma}^{(p)},\qquad \forall\; (m',n')\in \ind_{M}^{1},
\end{split}
\ee		
			\be\label{gp:gGammap}
			\begin{split}
&	g^{(p)}:=\frac{1}{p!} \frac{d^p}{dt^p} (g(t))
	\Big|_{t=t_k^*},\qquad g^{(p)}_{\Gamma}:=\frac{1}{p!} \frac{d^p}{dt^p} \left(g_\Gamma(t)\sqrt{(r'(t))^2+(s'(t))^2}\right)
	\Big|_{t=t_k^*},  \\
	&	r^{(p)}:= \frac{d^p}{dt^p} (r(t))
	\Big|_{t=t_k^*}, \qquad s^{(p)}:= \frac{d^p}{dt^p} (s(t))
	\Big|_{t=t_k^*},
	\end{split}
	\ee
		where all the transmission coefficients $T^{u_+}, T^{\pm}, T^{\gd}, T^{\gn}$ are uniquely determined by
		$r^{(p)}$, $s^{(p)}$  for $p=0,\ldots,M$ and $\{a_{\pm}^{(m,n)}: (m,n)\in \ind_{M-1}\}$.	Moreover, let  $T^{u_+}_{m',n',m,n}$ be the transmission coefficient of $u_+^{(m,n)}$ in \eqref{tranmiss:cond} with $(m,n)\in \ind_{M}^{1}$, $m+n=m'+n'$ and $(m',n')\in \ind_{M}^{1}$. Then $T^{u_+}_{m',n',m,n}$ only depends on $a_{\pm}^{(0,0)}$ and $(r'(t_k^*), s'(t_k^*))$ of \eqref{parametric}. Particularly, 	
		\be\label{T0000}
T^{u_+}_{0,0,0,0}=1
\quad \mbox{and}\quad
T^{u_+}_{m',n',0,0}=0 \quad \mbox{if }  (m',n')\ne (0,0).
		\ee	
	\end{theorem}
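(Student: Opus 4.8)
The plan is to derive \eqref{tranmiss:cond} by systematically encoding the two jump conditions $[u]=g$ and $[a\nabla u\cdot\nv]=g_\Gamma$ along the parametrized curve $\Gamma$, differentiating them in the curve parameter $t$, and then using the expansion \eqref{u:approx:ir:key:2} to re-express everything in terms of the partial-derivative ``unknowns'' $u_\pm^{(m,n)}$ with $(m,n)\in\ind_M^1$. First I would substitute the parametrization $x=r(t)$, $y=s(t)$ into the one-sided Taylor expansions: writing $x-x_i^*=r(t)-r(t_k^*)$ and $y-y_j^*=s(t)-s(t_k^*)$, each of $G^\pm_{M,m,n}$ and $H^\pm_{M,m,n}$ becomes a power series in $(t-t_k^*)$ whose coefficients are polynomials in $r^{(p)},s^{(p)}$ and in $\{a_\pm^{(i,j)}\}$. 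Thus $u_\pm$ restricted to $\Gamma$, and likewise the normal flux $a_\pm\nabla u_\pm\cdot\nv$ (using $\nv=(s'(t),-r'(t))/\sqrt{(r')^2+(s')^2}$), become power series in $(t-t_k^*)$ with coefficients that are linear in the unknowns $u_\pm^{(m,n)}$ and affine in the data $f_\pm^{(m,n)}$. The jump conditions then read as identities between two such power series: $[u]$ matches $\sum_p g^{(p)}(t-t_k^*)^p$ and, after multiplying through by $\sqrt{(r')^2+(s')^2}$ to clear the normalization, $[a\nabla u\cdot\nv]$ matches $\sum_p g_\Gamma^{(p)}(t-t_k^*)^p$, which is exactly why the weighted quantities in \eqref{gp:gGammap} appear.

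Next I would match coefficients of $(t-t_k^*)^p$ for $p=0,\ldots,M$ in the first identity and $p=0,\ldots,M-1$ in the second; this yields a linear system whose unknowns are $\{u_+^{(m,n)},u_-^{(m,n)}:(m,n)\in\ind_M^1\}$, with right-hand side built from $g^{(p)},g_\Gamma^{(p)},f_\pm^{(m,n)}$. The key structural observation, which I would exploit to solve for $u_-^{(m',n')}$ in terms of $u_+^{(m',n')}$ rather than inverting a large dense system, is the triangularity inherited from \eqref{u:approx:ir:key:2}: the leading coefficient in $(t-t_k^*)$ of $G^\pm_{M,m,n}$ restricted to $\Gamma$ is $(r')^m(s')^n/(m!n!)$ times $(t-t_k^*)^{m+n}$ plus higher-order terms, so the coefficient of $(t-t_k^*)^{m'+n'}$ in the matched identities involves only the unknowns $u_\pm^{(m,n)}$ with $m+n\le m'+n'$. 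Ordering the unknowns by total degree $m+n$ and then by $m$, the system becomes block lower-triangular in degree, and within each degree the two jump conditions (value and flux) suffice to eliminate the two new ``$\Om$-side'' unknowns $u_-^{(0,k)}$ and $u_-^{(1,k')}$ in terms of $\Op$-side unknowns of equal or lower degree — this is precisely the content of the constraint $m+n\le m'+n'$ in \eqref{tranmiss:cond} and of the index set $\ind_M^1$. The coefficients $T^{u_+},T^\pm,T^g,T^{g_\Gamma}$ are then read off by back-substitution, and being compositions of the $G^\pm,H^\pm$ data with the geometric data, they depend only on $r^{(p)},s^{(p)}$ and $\{a_\pm^{(m,n)}:(m,n)\in\ind_{M-1}\}$ as claimed.

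For the refined claims I would isolate the top-degree part. Fixing $m'+n'=m+n=:d$, the coefficient of $(t-t_k^*)^d$ in the value jump $[u]$ depends only on the leading terms $(r')^m(s')^n/(m!n!)$ (i.e.\ only on $(r'(t_k^*),s'(t_k^*))$, not on higher $r^{(p)},s^{(p)}$), and in the flux jump only on $a_\pm^{(0,0)}$ (since $a_\pm\nabla u_\pm\cdot\nv$ has its lowest-degree-$d$ part governed by $a_\pm^{(0,0)}$ times the leading gradient terms); hence $T^{u_+}_{m',n',m,n}$ for $m+n=m'+n'$ depends only on $a_\pm^{(0,0)}$ and $(r'(t_k^*),s'(t_k^*))$. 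Finally, \eqref{T0000} follows by inspection at degree $d=0$: the only degree-zero identity is $u_+^{(0,0)}-u_-^{(0,0)}=g^{(0)}$, and there is no degree-zero flux relation linking $u_-^{(0,0)}$ to $u_+^{(0,0)}$ beyond what the solution already satisfies, so $u_-^{(0,0)}=u_+^{(0,0)}-g^{(0)}$, giving $T^{u_+}_{0,0,0,0}=1$ and $T^{u_+}_{m',n',0,0}=0$ for $(m',n')\ne(0,0)$ since no higher-degree unknown $u_-^{(m',n')}$ can be forced by $u_+^{(0,0)}$ alone. I expect the main obstacle to be bookkeeping: verifying carefully that at each total degree $d$ exactly the two unknowns $u_-^{(0,\cdot)}$ and $u_-^{(1,\cdot)}$ are solvable from the two (value and flux) coefficient equations — i.e.\ that the relevant $2\times2$ sub-block is invertible for all admissible $(r'(t_k^*),s'(t_k^*))\ne(0,0)$ and $a_\pm^{(0,0)}>0$ — and that the elimination respects the degree ordering so the whole scheme is genuinely triangular; this invertibility is where the nondegeneracy hypotheses (A1), (A3) and the positivity of $a$ are essential.
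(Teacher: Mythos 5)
Your proposal follows essentially the same route as the paper: parametrize the two jump conditions along $\Gamma$, expand in powers of $(t-t_k^*)$, match coefficients degree by degree, and exploit the resulting block-triangular structure to solve a $2\times 2$ system for the two new unknowns $u_-^{(0,p)}$ and $u_-^{(1,p-1)}$ at each total degree $p$. The invertibility you flag as the main obstacle is exactly what the paper settles via the explicit determinant formula \eqref{GThan0}, which evaluates to $a_-^{(0,0)}p((r'(t_k^*))^2+(s'(t_k^*))^2)^p/(p!)^2>0$ under (A1) and (A3), so your plan is sound and complete once that computation is carried out.
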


By the proof of \cref{thm:tranmiss:interface},
the transmission coefficients $T^{u_+}, T^{\pm}, T^{\gd}, T^{\gn}$ are uniquely determined by solving  \eqref{interface:flux:1} and \eqref{interface:u:1} recursively in the order $p=1,\dots,M$,
 and  $u_{-}^{(0,0)}
=u_{+}^{(0,0)}-g^{(0)}$.
	Next, we provide the 13-point FDM  with the fifth-order consistency for interior irregular points.

\begin{figure}[h]
	\centering
	\hspace{-0.4cm}	
	\begin{subfigure}[b]{0.3\textwidth}
		\begin{tikzpicture}[scale = 2.2]
			\draw[help lines,step = 0.5]
			(-1,-1) grid (1,1);
			\draw[line width=1.5pt, blue]  plot [smooth,tension=0.8]
			coordinates {(-1,-0.4) (-0.5,0.15) (0,0.2) (0.3,1)};
			\node (A) at (0.3,-0.3) {\color{red}{$a_-$}};
			\node (A) at (-0.7,0.3) {\color{red}{$a_+$}};
			\node (A) at (0.37,0.3) {{\footnotesize{$\left[u\right]=g$}}};
			\node (A) at (0.51,0.1) { {\footnotesize{ $\left[{\color{red}{a}}\nabla  u \cdot \nv \right]=g_{\Gamma}$}}};
			\node (A) at (-0.7,-0.2) {\color{blue}$\Gamma$};
		\end{tikzpicture}
	\end{subfigure}
	\begin{subfigure}[b]{0.3\textwidth}
		\hspace{0.6cm}
		\begin{tikzpicture}[scale = 2.2]
			\draw[help lines,step = 0.5]
			(-1,-1) grid (1,1);
			\node at (-0.5,0.5)[circle,fill,inner sep=2pt,color=black]{};
			\node at (-0.5,0)[circle,fill,inner sep=2pt,color=black]{};
			\node at (-0.5,-0.5)[circle,fill,inner sep=2pt,color=black]{};
			\node at (0,0.5)[circle,fill,inner sep=2pt,color=black]{};
			\node at (0,0)[circle,fill,inner sep=2pt,color=red]{};
			\node at (0,-0.5)[circle,fill,inner sep=2pt,color=black]{};
			\node at (0.5,0.5)[circle,fill,inner sep=2pt,color=black]{};
			\node at (0.5,0)[circle,fill,inner sep=2pt,color=black]{};
			\node at (0.5,-0.5)[circle,fill,inner sep=2pt,color=black]{};
			\node (A) at (-0.31,0.6) {{\tiny{$C_{-1,1}$}}};
			\node (A) at (-0.31,0.1) {{\tiny{$C_{-1,0}$}}};
			\node (A) at (-0.26,-0.4) {{\tiny{$C_{-1,-1}$}}};
			\node (A) at (0.14,0.6) {{\tiny{$C_{0,1}$}}};
			\node (A) at (0.14,0.1) {{\tiny{$C_{0,0}$}}};
			\node (A) at (0.18,-0.4) {{\tiny{$C_{0,-1}$}}};
			\node (A) at (0.65,0.6) {{\tiny{$C_{1,1}$}}};
			\node (A) at (0.65,0.1) {{\tiny{$C_{1,0}$}}};
			\node (A) at (0.68,-0.4) {{\tiny{$C_{1,-1}$}}};
			\draw[line width=1.5pt, blue]  plot [smooth,tension=0.8]
			coordinates {(-1,-0.4) (-0.5,0.15) (0,0.2) (0.3,1)};
			\node (A) at (-0.7,-0.2) {\color{blue}$\Gamma$};
		\end{tikzpicture}
	\end{subfigure}	
	\begin{subfigure}[b]{0.3\textwidth}
		\hspace{1.1cm}
		\vspace{-0.1cm}
		\begin{tikzpicture}[scale = 2.2]
			\draw[help lines,step = 0.5]
			(-1,-1) grid (1,1);
			\node at (-1,0)[circle,fill,inner sep=2pt,color=black]{};
			\node at (0,-1)[circle,fill,inner sep=2pt,color=black]{};
			\node at (0,1)[circle,fill,inner sep=2pt,color=black]{};
			\node at (1,0)[circle,fill,inner sep=2pt,color=black]{};
			\node (A) at (-0.8,0.1) {{\tiny{$C_{-2,0}$}}};
			\node (A) at (0.17,-0.9) {{\tiny{$C_{0,-2}$}}};
			\node (A) at (0.14,0.9) {{\tiny{$C_{0,2}$}}};
			\node (A) at (0.85,0.1) {{\tiny{$C_{2,0}$}}};
			\draw[line width=1.5pt, blue]  plot [smooth,tension=0.8]
			coordinates {(-1,-0.4) (-0.5,0.15) (0,0.2) (0.3,1)};
			\node (A) at (-0.7,-0.2) {\color{blue}$\Gamma$};
		\end{tikzpicture}
	\end{subfigure}
	\caption
	{ The illustration for the 13-point scheme \eqref{13point:scheme} in \cref{thm:13point:scheme}. }
	\label{Ckl:13:points}
\end{figure}
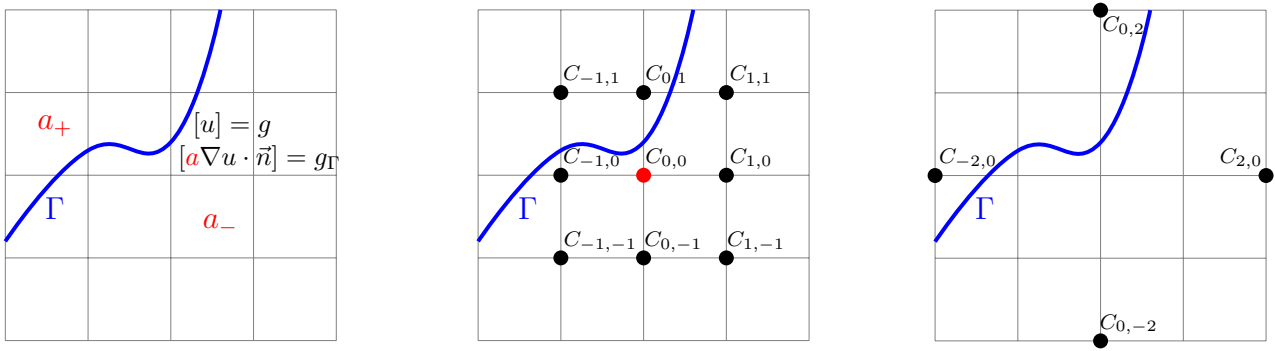	
	 \begin{theorem}\label{thm:13point:scheme}
Let $(x_i,y_j)=(x_i^*+v_0h,y_j^*+w_0h)$ be an interior irregular point with
		$(x_i^*,y_j^*)\in \Gamma$ and $-1<v_0,w_0<1$.
		Then
		the following 13-point scheme centered at  $(x_i,y_j)$ (see \cref{Ckl:13:points}):
		\be
		\begin{split}\label{13point:scheme}
		h^{-1}	\mathcal{L}_hu_h  & :=
h^{-1}	\Big( \sum_{k=-1}^{1}\sum_{\ell =-1}^{1} C_{k,\ell}(u_{h})_{i+k,j+\ell}  +  \sum_{k=-2,2} C_{k,0}(u_{h})_{i+k,j}+\sum_{\ell=-2,2} C_{0,\ell }(u_{h})_{i,j+\ell} \Big)\\
			&=\sum_{(m,n)\in \ind_{3} } f_+^{(m,n)}J^{+}_{m,n} + \sum_{(m,n)\in \ind_{3}} f_-^{(m,n)}J^{-}_{m,n} +\sum_{p=0}^5 \gd^{(p)}J^{\gd}_{p} + \sum_{p=0}^4 \gn^{(p)}J^{\gn}_{p},
		\end{split}
	\ee
		achieves the fifth-order consistency for $\left[u\right]=g$ and $\left[a\nabla  u \cdot \nv \right]=g_{\Gamma}$ at $(x_i^*,y_j^*)\in \Gamma$, where $\{C_{k,\ell}\}_{k,\ell=-1,0,1}$, $\{C_{k,0}\}_{k=-2,2}$, and $\{C_{0,\ell}\}_{\ell=-2,2}$ are given by
$C_{k,\ell}:=\sum_{p=0}^{5} c_{k,\ell,p}h^p$ with $c_{k,\ell,p}\in \R$ such that $\{c_{k,\ell,p}\}_{k,\ell=-1,0,1}$, $\{c_{k,0,p}\}_{k=-2,2}$, and $\{c_{0,\ell,p}\}_{\ell=-2,2}$ with $p=0,\ldots,5$ is any nontrivial solution of the linear system induced by the following equations
		\be\label{EQ:irregular}
	I^+_{m,n}+	\sum_{  \substack{ (m',n')\in \ind_{5}^{1} \\  m'+n' \ge m+n}} I^-_{m',n'} T^{u_+}_{m',n',m,n}=\bo(h^{6}), \quad \mbox{ for all }\; (m,n)\in \ind_{5}^1,
		\ee
		with
		\be\label{right:irregular}
			\begin{split}
				& I^{\pm}_{m,n}:=\sum_{(k,\ell)\in \tilde{d}_{i,j}^\pm}
				C_{k,\ell} G^{\pm}_{5,m,n}(v_1h,w_1 h), \quad J^{\pm}_{m,n}:=
				 J_{m,n}^{\pm,0}+J^{\pm,T}_{m,n},\quad v_1=v_0+k, \qquad w_1=w_0+\ell,\\
				& J^{\pm,0}_{m,n}:=h^{-1} \sum_{(k,\ell)\in \tilde{d}_{i,j}^\pm} C_{k,\ell} H^{\pm}_{5,m,n}(v_1h,w_1h), \qquad J^{\pm,T}_{m,n}:= h^{-1}
				\sum_{  \substack{ (m',n')\in \ind_{5}^{1} }} I^{-}_{m',n'} T^{\pm}_{m',n',m,n},\\
				& J^{\gd}_{p}:= h^{-1}
				\sum_{ \substack{ (m',n')\in \ind_{5}^{1} }} I^{-}_{m',n'} T^{\gd}_{m',n',p}, \qquad
				J^{\gn}_{p}:= h^{-1}
				\sum_{ \substack{ (m',n')\in \ind_{5}^{1} }} I^{-}_{m',n'} T^{\gn}_{m',n',p}, \qquad \tilde{d}_{i,j}^\pm =d_{i,j}^\pm \cup e_{i,j}^\pm.
			\end{split}
		\ee
	\end{theorem}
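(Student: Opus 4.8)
The idea is to compute the local truncation error of \eqref{13point:scheme}: substitute the exact solution $u$ of \eqref{Qeques2} into the left-hand side and show that, as soon as $\{c_{k,\ell,p}\}$ solves the linear system obtained by matching powers of $h$ in \eqref{EQ:irregular}, the result agrees with the right-hand side of \eqref{13point:scheme} up to $\bo(h^{5})$. First I would split the thirteen stencil nodes, via $\tilde{d}_{i,j}^{\pm}=d_{i,j}^{\pm}\cup e_{i,j}^{\pm}$, into those lying on the $\overline{\Op}$-side and those on the $\overline{\Om}$-side of $\Gamma$, evaluate $u$ there as $u_+$ resp.\ $u_-$, and Taylor expand each about the base point $(x_i^*,y_j^*)\in\Gamma$ by \eqref{u:approx:ir:key:2} with $M=5$ --- which is legitimate on a ball containing all thirteen nodes since, by (A1)--(A2), $u_\pm,a_\pm,f_\pm$ extend smoothly across $\Gamma$. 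Writing $v_1=v_0+k$, $w_1=w_0+\ell$, this expresses $\mathcal{L}_h u$ as $\sum_{(m,n)\in\ind_5^1}\bigl(u_+^{(m,n)}I^+_{m,n}+u_-^{(m,n)}I^-_{m,n}\bigr)+h\sum_{(m,n)\in\ind_3}\bigl(J^{+,0}_{m,n}f_+^{(m,n)}+J^{-,0}_{m,n}f_-^{(m,n)}\bigr)+\bo(h^{6})$, with $I^\pm_{m,n},J^{\pm,0}_{m,n}$ exactly as in \eqref{right:irregular} and the $\bo(h^6)$ remainder coming from each $C_{k,\ell}=\bo(1)$.

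Next I would eliminate the $u_-$ derivatives: by the transmission equation \eqref{tranmiss:cond} of \cref{thm:tranmiss:interface}, every $u_-^{(m',n')}$ with $(m',n')\in\ind_5^1$ is a combination of $u_+^{(m,n)}$ (with $(m,n)\in\ind_5^1$, $m+n\le m'+n'$) plus $f_\pm^{(m,n)}$, $g^{(p)}$, $g_\Gamma^{(p)}$ terms, with the $h$-independent coefficients $T^{u_+},T^{\pm},T^{g},T^{g_\Gamma}$. Substituting into $\sum_{(m',n')}u_-^{(m',n')}I^-_{m',n'}$ and regrouping, the coefficient of $u_+^{(m,n)}$ becomes $I^+_{m,n}+\sum_{(m',n')\in\ind_5^1,\,m'+n'\ge m+n}I^-_{m',n'}T^{u_+}_{m',n',m,n}$, while the $f_\pm^{(m,n)}$-coefficient gains the extra term $h\,J^{\pm,T}_{m,n}$ and the $g^{(p)}$-, $g_\Gamma^{(p)}$-coefficients become $h\,J^{g}_{p}$, $h\,J^{g_\Gamma}_{p}$; these match exactly $J^{\pm}_{m,n}=J^{\pm,0}_{m,n}+J^{\pm,T}_{m,n}$, $J^{g}_{p}$, $J^{g_\Gamma}_{p}$ of \eqref{right:irregular}, i.e.\ the right-hand side of \eqref{13point:scheme}. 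Dividing by $h$, the truncation error therefore reduces to
\[
\sum_{(m,n)\in\ind_5^1}u_+^{(m,n)}\,h^{-1}\Bigl(I^+_{m,n}+\!\!\sum_{\substack{(m',n')\in\ind_5^1\\ m'+n'\ge m+n}}\!\!I^-_{m',n'}T^{u_+}_{m',n',m,n}\Bigr)+\bo(h^{5}).
\]
Since the $u_+^{(m,n)}$, $(m,n)\in\ind_5^1$, are $h$-independent and can be prescribed independently (they play the role of Cauchy data for the PDE in $\Op$), this is $\bo(h^{5})$ if and only if each parenthesized quantity is $\bo(h^{6})$, which is precisely \eqref{EQ:irregular}. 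This proves the claimed fifth-order consistency for any nontrivial solution of the induced linear system.

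It then remains to show that this system has nontrivial solutions and to set up the recursive solver. A key structural fact is that the reduction \eqref{recursive} forces $a^{u_\pm}_{p,q,m,n}=0$ whenever $p+q<m+n$, so each $G^\pm_{5,m,n}(v_1h,w_1h)$ begins at order $h^{m+n}$; hence the polynomial $P_{m,n}(h):=I^+_{m,n}+\sum_{m'+n'\ge m+n}I^-_{m',n'}T^{u_+}_{m',n',m,n}$ also begins at order $h^{m+n}$, so the requirement $P_{m,n}=\bo(h^6)$ contributes exactly $\max(0,6-m-n)$ scalar equations, for a total of $\sum_{(m,n)\in\ind_5^1}(6-m-n)=36$ equations in the $13\times6=78$ unknowns $\{c_{k,\ell,p}\}$. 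Matching powers of $h$ also shows that the $h^d$-block of equations involves only $\{c_{k,\ell,s}\}_{s\le d}$, so --- exactly as in \cref{thm:regular:interior} and \eqref{Thm:regular:Matrix:Form} --- the system decomposes into a sequence of small linear systems, one per power $h^0,\dots,h^5$, with coefficient matrices that are submatrices of a single fixed matrix and right-hand sides built from previously-computed coefficients; this is the recursive solver. The dimension count ($36<78$) shows the solution space is positive-dimensional and contains nontrivial members. The main obstacle I anticipate is the bookkeeping of the middle step: verifying that, after the transmission substitution, the $f_\pm,g,g_\Gamma$ parts collapse \emph{exactly} into the $J$'s of \eqref{right:irregular} while the $u_+$ parts assemble \emph{exactly} into \eqref{EQ:irregular}, and then checking (as in the regular case) that each small subsystem in the recursive decomposition is genuinely solvable, so that a nontrivial $\{c_{k,\ell,p}\}$ indeed exists.
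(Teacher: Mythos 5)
Your proposal is correct and follows essentially the same route as the paper's proof: Taylor expansion of $u_\pm$ about the base point via \eqref{u:approx:ir:key:2}, elimination of the $u_-^{(m',n')}$ through the transmission equation \eqref{tranmiss:cond} so that the $f_\pm,g,g_\Gamma$ contributions assemble into the $J$'s of \eqref{right:irregular} and the $u_+$ contributions into \eqref{EQ:irregular}, followed by the power-of-$h$ block decomposition into the recursive systems \eqref{Matrix:Form:irregular}. Your $36$-versus-$78$ dimension count is a nice supplement, but note it does not by itself yield a \emph{nontrivial} solution in the paper's sense (one with $C_0\neq\textbf{0}$ extendable through all blocks $A_dC_d=b_d$); that still requires the surjectivity of the $A_d$'s, which you correctly flag and which the paper settles by symbolic calculation.
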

%
\noindent	By the symbolic calculation,
the linear system in \eqref{EQ:irregular} always has nontrivial solutions. \\

\noindent
\textbf{An efficient way to compute $\{C_{k,\ell}\}$ in \cref{thm:13point:scheme}:}
From the proof of \cref{thm:13point:scheme},  we observe that \eqref{EQ:irregular}  can be equivalently expressed as
{\footnotesize{
\be\label{irregular:simplify3}
\begin{split}
	& \sum_{(k,\ell)\in \tilde{d}_{i,j}^+}c_{k,\ell,d} G_{m,n}(v_1,w_1)+	 \sum_{(k,\ell)\in \tilde{d}_{i,j}^-}  c_{k,\ell,d} \sum_{\substack{(m',n')\in \ind_{5}^1 \\  m'+n'=m+n} }
	T^{u_+}_{m',n',m,n} G_{m',n'}(v_1,w_1) \\
	&=-  \sum_{(k,\ell)\in \tilde{d}_{i,j}^+} 	 \sum_{s=0}^{d-1} c_{k,\ell,s} 	
	\sum_{ \substack{ (p,q)\in \ind_{5}^2 \setminus \ind_{m+n}^2 \\ p+q=m+n+d-s}} \frac{a^{u_{+}}_{p,q,m,n}}{p!q!} v_1^{p}w_1^{q}  -	 \sum_{(k,\ell)\in \tilde{d}_{i,j}^-} 	 \sum_{s=0}^{d-1} c_{k,\ell,s}	 \sum_{\substack{(m',n')\in \ind_{5}^1 \\ m'+n'=m+n+d-s} }
	T^{u_+}_{m',n',m,n} G_{m',n'}(v_1,w_1) \\
	&\quad -	\sum_{(k,\ell)\in \tilde{d}_{i,j}^-} 	 \sum_{s=0}^{d-1} c_{k,\ell,s}	 \sum_{\substack{(m',n')\in \ind_{5}^1 \\ m'+n'\ge m+n } }
	T^{u_+}_{m',n',m,n}  \sum_{ \substack{ (p,q)\in \ind_{5}^2 \setminus \ind_{m'+n'}^2 \\ p+q=m+n+d-s}} \frac{a^{u_{-}}_{p,q,m',n'}}{p!q!}v_1^{p}w_1^{q}, \quad \mbox{for all } (m,n)\in \ind_{5-d}^1,  0\le d \le 5.
\end{split}
\ee}}
%
%
%
The system of linear equations in \eqref{irregular:simplify3} can be further equivalently expressed as follows:
\be\label{Matrix:Form:irregular}
\begin{split}
&A_0C_0=\textbf{0},\quad
A_1 C_1=b_1,\quad A_2 C_2=b_2,\quad A_3 C_3=b_3,\quad  A_4 C_4=b_4,\quad A_5 C_5=0\quad \mbox{with }\\
&C_d=\left(c_{-1,-1,d},
c_{-1,0,d},
c_{-1,1,d},
c_{0,-1,d},
c_{0,0,d},
c_{0,1,d},
c_{1,-1,d},
c_{1,0,d},
c_{1,1,d},
c_{-2,0,d},
c_{2,0,d},
c_{0,-2,d},
c_{0,2,d}\right)^{\textsf{T}},
\end{split}
\ee
where $b_d, d=1,\ldots,4$ depend on $\{ C_i\}_{i=0}^{d-1}$, $\{a_{\pm}^{(i,j)}\}_{(i,j) \in \ind_{4}}$,  $\{ (r^{(p)}, s^{(p)})\}_{ 1\le p\le 5}$ in  \eqref{gp:gGammap} and $(v_0,w_0)$ in \eqref{base:pt:gamma}.
\cref{thm:tranmiss:interface} shows that $T^{u_+}_{m',n',m,n}$ only depends on $a_{\pm}^{(0,0)}$ and $(r'(t_k^*), s'(t_k^*))$ in \eqref{parametric} if $(m,n)\in \ind_{M}^{1}$, $m+n=m'+n'$ and $(m',n')\in \ind_{M}^{1}$. So the left-hand side of \eqref{irregular:simplify3} implies
 all matrices $A_0, \ldots, A_5$ in \eqref{Matrix:Form:irregular} only depend on $a_{\pm}^{(0,0)}$, $(r'(t_k^*), s'(t_k^*))$ in \eqref{parametric} and $(v_0,w_0)$ in \eqref{base:pt:gamma} such that
$A_0$ is a $11\times 13$ matrix and all other matrices $A_1,\ldots, A_5$ are sub-matrices of $A_0$ by deleting some rows of $A_0$ as follows:
\be\label{All:Ad:irregular}
\begin{split}
&A_1=A_0(1:9, :),\quad A_2=A_0(1:7,:),\quad
A_3=A_0(1:5,:),\quad A_4=A_0(1:3,:),\quad\\
&A_5=A_0(1,:)=(1 \quad 1 \quad  1 \quad 1 \quad 1 \quad  1 \quad 1 \quad 1 \quad  1 \quad 1 \quad 1 \quad  1 \quad 1).
\end{split}
\ee
%
%
\noindent
\textbf{The explicit expression of $A_0$ in \eqref{Matrix:Form:irregular}--\eqref{All:Ad:irregular}:}
Recall that $v_1=v_0+k$, $w_1=w_0+\ell$ in \eqref{right:irregular}. Let $\xi_1=\tfrac{1}{2}(w_{1}^2-v_{1}^2)$, $\xi_2=v_{1}w_{1}$, $\xi_3=\tfrac{1}{6}w_{1}(w_{1}^2-3v_{1}^2)$, $\xi_4=\tfrac{1}{6}v_1(3w_{1}^2-v_{1}^2)$, $\xi_5=\tfrac{1}{24}(w_{1}^4-6v_{1}^2w_{1}^2+v_{1}^4)$, $\xi_6=\tfrac{1}{6}v_{1}w_{1}(w_{1}^2-v_{1}^2)$, $\xi_7=\tfrac{1}{120}w_{1}(w_{1}^4-10v_{1}^2w_{1}^2+5v_{1}^4)$, $\xi_8=\tfrac{1}{120}v_{1}(v_{1}^4-10v_{1}^2w_{1}^2+5w_{1}^4)$, and define two $11\times 1$ matrices  as follows
{\footnotesize{
\be\label{Apm:irregular}
\textsf{A}^{+}_{k,\ell}:=\begin{pmatrix}
	1\\
	w_{1}\\
	v_{1}\\
	\xi_1\\
	\xi_2\\
	\xi_3\\
	\xi_4\\
	\xi_5\\
	\xi_6\\
	\xi_7\\
	\xi_8\\
\end{pmatrix}, \qquad \textsf{A}^{-}_{k,\ell}:=\begin{pmatrix}
1\\
T^{u_{+}}_{0,1,0,1}w_{1}+T^{u_{+}}_{1,0,0,1}v_{1}\\
T^{u_{+}}_{0,1,1,0}w_{1}+T^{u_{+}}_{1,0,1,0}v_{1}\\
T^{u_{+}}_{0,2,0,2}\xi_{1}+T^{u_{+}}_{1,1,0,2}\xi_{2}\\
T^{u_{+}}_{0,2,1,1}\xi_{1}+T^{u_{+}}_{1,1,1,1}\xi_{2}\\
T^{u_{+}}_{0,3,0,3}\xi_{3}+T^{u_{+}}_{1,2,0,3}\xi_{4}\\
T^{u_{+}}_{0,3,1,2}\xi_{3}+T^{u_{+}}_{1,2,1,2}\xi_{4}\\
T^{u_{+}}_{0,4,0,4}\xi_{5}+T^{u_{+}}_{1,3,0,4}\xi_{6}\\
T^{u_{+}}_{0,4,1,3}\xi_{5}+T^{u_{+}}_{1,3,1,3}\xi_{6}\\
T^{u_{+}}_{0,5,0,5}\xi_{7}+T^{u_{+}}_{1,4,0,5}\xi_{8}\\
T^{u_{+}}_{0,5,1,4}\xi_{7}+T^{u_{+}}_{1,4,1,4}\xi_{8}\\
\end{pmatrix},
\ee}}
where $T^{u_{+}}$ is the coefficient of $u_{+}$ in \eqref{tranmiss:cond}. By \cref{thm:tranmiss:interface}, all $T^{u_{+}}$ in \eqref{Apm:irregular}  only depend on $a_{\pm}^{(0,0)}$ and $(r'(t_k^*), s'(t_k^*))$.
Now, by the left-hand side of \eqref{irregular:simplify3}, the $11\times 13$ matrix $A_0$ for the irregular point in \cref{Ckl:13:points}  is
{\small{
\be\label{A0:example}
A_0=(\textsf{A}^{-}_{-1,-1} \quad \textsf{A}^{-}_{-1,0}\quad \textsf{A}^{+}_{-1,1} \quad \textsf{A}^{-}_{0,-1} \quad \textsf{A}^{-}_{0,0} \quad \textsf{A}^{+}_{0,1} \quad \textsf{A}^{-}_{1,-1} \quad \textsf{A}^{-}_{1,0} \quad \textsf{A}^{-}_{1,1} \quad \textsf{A}^{+}_{-2,0} \quad \textsf{A}^{-}_{2,0} \quad \textsf{A}^{-}_{0,-2} \quad \textsf{A}^{+}_{0,2}).
\ee}}
The matrix $A_0$ for other irregular points can be obtained straightforwardly by using $\textsf{A}^{\pm}_{k,\ell}$ in \eqref{Apm:irregular}.

\noindent
\textbf{Stencil coefficients $\{C_{k,\ell}\}$ in \cref{thm:13point:scheme} for numerical tests:}
	To verify the 13-point scheme \eqref{13point:scheme} of  \cref{thm:13point:scheme} for numerical tests in \cref{sec:numerical}, we obtain a unique solution $\{C_{k,\ell}\}$ by
\begin{itemize}	
	\item choosing $(x_i^*,y_j^*)\in \Gamma$ to be the orthogonal projection of  $(x_i,y_j)$ (see the right panel of \cref{fig:3choices});
	\item setting $c_{0,0,0}=1$ and $C_5=\textbf{0}$;
\item first solving $A_0C_0=0$ for $C_0$, and then solving $A_d C_d=b_d$ in \eqref{Matrix:Form:irregular} recursively in the order $d=1,\ldots,4$ by the MATLAB Package $\texttt{mldivide}(\textsf{A},\textsf{b})$.
\end{itemize}
Note that if we use the MATLAB Package $\texttt{mldivide}(\textsf{A},\textsf{b})$ to
solve $\textsf{A}\textsf{x}=\textsf{b}$ with infinitely many solutions, then it automatically sets free parameters to be $0$.

\begin{figure}[htbp]
	\centering	
	\hspace{-0.9cm}
	\begin{subfigure}[b]{0.3\textwidth}
		\begin{tikzpicture}[scale = 2.1]
			\draw[help lines,step = 1]
			(1,1) grid (3,3);
			\node at (2,2)[circle,fill,inner sep=2pt,color=black]{};
			\node (A) at (0.9,1.8) {\color{blue}$\Gamma$};	
			\node (A) at (2.3,1.8) {$(x_i,y_j)$};
			\draw[line width=1.5pt, blue]  plot [smooth,tension=0.8]
			coordinates { (1,1.8) (2,2.7) (2.3,3)};
			\node (A) at (1.2,2.4) {$\Omega_{+}$};	
			\node (A) at (2.5,1.2) {$\Omega_{-}$};
			\node at (1.25,2)[circle,fill,inner sep=2pt,color=red]{};
			\node (A) at (1.5,1.8) {$(x_i^*,y_j^*)$};
			 \draw[decorate,decoration={brace,amplitude=2.5mm},xshift=0pt,yshift=10pt, thick] (1.25,1.65) -- node [black,midway,yshift=0.6cm]{} (2,1.65);
			\node (A) at (1.8,2.2) {$|v_0|h$};	
			\node (A) at (2.3,2.35) {$w_0=0$};
		\end{tikzpicture}
	\end{subfigure}
	\begin{subfigure}[b]{0.3\textwidth}
		\hspace{0.4cm}
		\begin{tikzpicture}[scale = 2.1]
			\draw[help lines,step = 1]
			(1,1) grid (3,3);
			\node at (2,2)[circle,fill,inner sep=2pt,color=black]{};
			\node (A) at (0.9,1.8) {\color{blue}$\Gamma$};	
			\node (A) at (2.3,1.8) {$(x_i,y_j)$};
			\draw[line width=1.5pt, blue]  plot [smooth,tension=0.8]
			coordinates { (1,1.8) (2,2.7) (2.3,3)};
			\node (A) at (1.2,2.4) {$\Omega_{+}$};	
			\node (A) at (2.5,1.2) {$\Omega_{-}$};
			\node at (2,2.7)[circle,fill,inner sep=2pt,color=red]{};
			\node (A) at (1.7,2.75) {$(x_i^*,y_j^*)$};	
			 \draw[decorate,decoration={brace,amplitude=2.5mm},xshift=0pt,yshift=10pt,thick] (2,2.35) -- node [black,midway,yshift=0.6cm]{} (2,1.65);
			\node (A) at (2.4,2.35) {$|w_0|h$};
			\node (A) at (1.6,1.9) {$v_0=0$};		
		\end{tikzpicture}
	\end{subfigure}
	\begin{subfigure}[b]{0.3\textwidth}
		\hspace{0.8cm}
		\begin{tikzpicture}[scale = 2.1]
			\draw[help lines,step = 1]
			(1,1) grid (3,3);
			\node at (2,2)[circle,fill,inner sep=2pt,color=black]{};
			\node (A) at (0.9,1.8) {\color{blue}$\Gamma$};	
			\node (A) at (2.3,1.8) {$(x_i,y_j)$};
			\draw[line width=1.5pt, blue]  plot [smooth,tension=0.8]
			coordinates { (1,1.8) (2,2.7) (2.3,3)};
			\node (A) at (1.2,2.4) {$\Omega_{+}$};	
			\node (A) at (2.5,1.2) {$\Omega_{-}$};
			\draw    (2,2) -- (1.67,2.4);
			\node at (1.67,2.4)[circle,fill,inner sep=2pt,color=red]{};
			\node (A) at (1.5,2.55) {$(x_i^*,y_j^*)$};	
			\draw    (1.67,2.4) -- (1.67,2);
			 \draw[decorate,decoration={brace,amplitude=2.5mm},xshift=0pt,yshift=10pt,thick] (2,2.05) -- node [black,midway,yshift=0.6cm]{} (2,1.65);
			\node (A) at (2.4,2.2) {$|w_0|h$};
			\draw    (1.67,2.4) --  (2,2.4);
			 \draw[decorate,decoration={brace,mirror,amplitude=2mm},xshift=0pt,yshift=10pt, thick] (1.67,1.65) -- node [black,midway,yshift=0.6cm]{} (2,1.65);
			\node (A) at (1.75,1.75) {$|v_0|h$};
		\end{tikzpicture}
	\end{subfigure}
	\caption
	{Three choices of $(x_i^*,y_j^*)$: $(x_i^*,y_j^*)=(x_i^*,y_j)$ (left), $(x_i^*,y_j^*)=(x_i,y_j^*)$ (middle) and $(x_i^*,y_j^*)\in \Gamma$ is the orthogonal projection of $(x_i,y_j) $ (right).}
	\label{fig:3choices}
\end{figure}
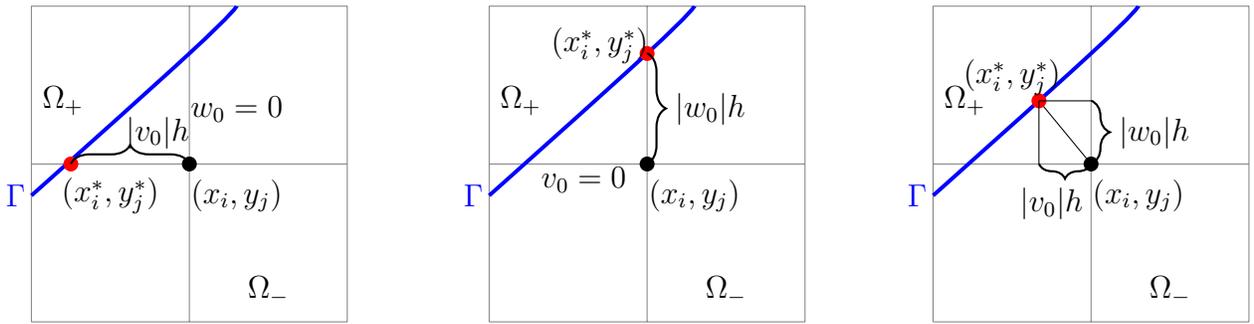

		\begin{remark}		
In this paper, we use $(x_i^*,y_j^*)\in \Gamma$ as the orthogonal projection of  $(x_i,y_j)$ (see the right panel of \cref{fig:3choices}). To obtain the orthogonal projection, we discretize the interface curve $\Gamma$ by a mesh of size $\frac{h}{2^4}$, and select the point on $\Gamma$ that is closest to   $(x_i,y_j)$. Theoretically, the choice of $(x_i^*,y_j^*) \in \Gamma$ within the stencil centered at $(x_i,y_j)$ will not affect the accuracy order. Two simple choices are $(x_i^*,y_j^*)=(x_i-v_0h, y_j)$ with $w_0=0$ or
$(x_i^*,y_j^*)=(x_i, y_j-w_0 h)$ with $v_0=0$, see left and middle panels of \cref{fig:3choices}. Among these two points, we can choose the one closest to the point $(x_i, y_j)$.
\end{remark}

\subsection{Estimate high order (partial) derivatives only using function values}\label{Estimate:derivatives}

To achieve the high order consistency for all grid points, our FDMs in \cref{subsec:regular,hybrid:Irregular:points,Appendix:Boundary} theoretically
use high order (partial) derivatives of the coefficient function $a$, the source term $f$, the interface curve $\Gamma$, two jump functions $g,g_{\Gamma}$, and functions on $\partial \Omega$. To avoid explicitly and symbolically computing such derivatives, numerically (but without losing accuracy and performance), we always use function values to estimate required high order (partial) derivatives in  this paper by the moving least-squares method in \cite{DLevin1998}.
Let $\textsf{Z}:=(\textsf{z}_1,\dots,\textsf{z}_{\textsf{K}})$, where $\textsf{K}\in \N$ and every $\textsf{z}_{\textsf{k}}$ is a 1D/2D point for $1\le \textsf{k}\le \textsf{K}$. For a 1D/2D point $\textsf{z}^*$, we define a $\textsf{K} \times \textsf{K}$ diagonal matrix  $\textsf{D}:=2 \textsf{diag}(\eta(\|\textsf{z}_1-\textsf{z}^*\|),\dots,\eta(\|\textsf{z}_{\textsf{K}}-\textsf{z}^*\|))$ with  $\eta(\textsf{r})=\exp(\textsf{r}^2/h^2)$,
and  define the space of polynomials of total degree $\le \textsf{M}$ as follows:
\[
\textsf{P}_\textsf{M}:=\begin{cases}
\textsf{span} \{ (x-\textsf{x}^*)^\textsf{m} :  0\le  \textsf{m}\le \textsf{M} \}, &\qquad \mbox{for the 1D case},\\
\textsf{span} \{ (x-\textsf{x}^*)^\textsf{m}(y-\textsf{y}^*)^\textsf{n} : 0\le \textsf{n}\le \textsf{M}-\textsf{m}  \text{ and } 0\le \textsf{m}\le \textsf{M}\} , &\qquad \mbox{for the 2D case}.
\end{cases}
\]
For the sake of better readability, we identify the linear space $\textsf{P}_\textsf{M}$ with a vector of the above basis elements $(\textsf{p}_1,\dots,\textsf{p}_\textsf{J})^{\textsf{T}}$ where $\textsf{J} \in \N$ is the dimension of $\textsf{P}_{\textsf{M}}$. Then we define a $\textsf{K} \times \textsf{J}$ matrix $\textsf{E}$ by
\[
\textsf{E}_{\textsf{k},\textsf{j}}:=\textsf{p}_\textsf{j}(\textsf{z}_\textsf{k}), \qquad \textsf{k}=1,\dots, \textsf{K}, \quad \textsf{j}=1,\dots, \textsf{J}.
\]
By \cite{DLevin1998}, the $\omega$th (partial) derivative of the 1D/2D function $\textsf{f}(\textsf{z})$ at the point $\textsf{z}^*$ can be approximated by
\be\label{est:deriva}
\textsf{f}^{(\omega)}(\textsf{z}^*)=(\textsf{f}(\textsf{z}_1),\dots,\textsf{f}(\textsf{z}_\textsf{K}))\textsf{D}^{-1}\textsf{E}(\textsf{E}^{\textsf{T}}\textsf{D}^{-1}\textsf{E})^{-1} (\textsf{p}_1^{(\omega)}(\textsf{z}^*),\dots,\textsf{p}_\textsf{J}^{(\omega)}(\textsf{z}^*))^{\textsf{T}}.
\ee
\noindent
\textbf{Details about our concrete choices of $\textsf{Z}, \textsf{P}_\textsf{M}, \textsf{z}^*$, and $(\textsf{x}^*,\textsf{y}^*)$ used in our numerical experiments:}
For numerical experiments in \cref{sec:numerical}, we shall employ the following settings to estimate high order (partial) derivatives using only function values:
\begin{itemize}	
	
	\item For 2D functions $a(x,y)$ and $f(x,y)$ used in \cref{thm:regular:interior}.	 Let $\textsf{Z}=\{(x_i\pm \textsf{i}\textsf{h},y_j\pm \textsf{j}\textsf{h}) : 0\le \textsf{i},\textsf{j}\le 2^2,  \textsf{h}=h/2^2\}$,  $\textsf{z}^*=(\textsf{x}^*,\textsf{y}^*)=(x_i,y_j)$. Then we use \eqref{est:deriva} with $\textsf{P}_\textsf{6}$ and  $\textsf{P}_\textsf{5}$ to approximate $\{a^{(m,n)}\}_{ (m,n)\in \ind_{6}}$ and $\{f^{(m,n)}\}_{ (m,n)\in  \ind_{5}}$,   respectively.

	\item For 2D functions $a(x,y)$ and $f(x,y)$ used in \cref{thm:13point:scheme}.	 Let $\textsf{Z}=\Omega_{\pm}\cap \{(x_i\pm \textsf{i}\textsf{h},y_j\pm \textsf{j}\textsf{h}) : 0\le \textsf{i},\textsf{j}\le 2^5,  \textsf{h}=h/2^5\}$,  $\textsf{z}^*=(x_i^*,y_j^*)$ and $(\textsf{x}^*,\textsf{y}^*)=(x_i,y_j)$. Then we use \eqref{est:deriva} with $\textsf{P}_\textsf{4}$ and  $\textsf{P}_\textsf{3}$ to approximate $\{a_{\pm}^{(m,n)} : (m,n)\in  \ind_{4}\}$ and $\{f_{\pm}^{(m,n)} : (m,n)\in  \ind_{3}\}$, respectively.

	\item For 1D functions $r(t),s(t),g(t)$ and $g_\Gamma(t)$ with $t\in \Gamma$ used in \cref{thm:13point:scheme}.
	
	\begin{itemize}	\item[1.] If the parametric equation $(r(t),s(t))$ of $\Gamma$ is equal to $(x,s(x))$.
		Let $\textsf{Z}=\{x_i^*\pm \textsf{i}\textsf{h} : 0\le \textsf{i}\le 5,  \textsf{h}=h/2^4\}$,  $\textsf{z}^*=\textsf{x}^*=x_i^*$. Then we use  \eqref{est:deriva} with $\textsf{P}_\textsf{6}$ to approximate  $\{ r^{(p)}\}_{p=0}^5$, $\{ s^{(p)}\}_{p=0}^5$, $\{ g^{(p)}\}_{p=0}^5$ and use \eqref{est:deriva} with $\textsf{P}_\textsf{5}$ to approximate $\{ g_{\Gamma}^{(p)}\}_{p=0}^4$. The corresponding procedure for  $(r(t),s(t))=(r(y),y)$ is straightforward.

		\item[2.] If  $(r(t),s(t))=(r(\theta),s(\theta))$ with $x^*_i=r(\theta^*)$ and $y^*_j=s(\theta^*)$.
		Let $\textsf{Z}=\{\theta^*\pm \textsf{i} \textsf{h} : 0\le \textsf{i}\le 5,  \textsf{h}=h/2^4\}$,  $\textsf{z}^*=\textsf{x}^*=\theta^*$. Then we  use \eqref{est:deriva} with $\textsf{P}_\textsf{6}$ to approximate $\{ r^{(p)}\}_{p=0}^5$, $\{ s^{(p)}\}_{p=0}^5$, $\{ g^{(p)}\}_{p=0}^5$ and  use \eqref{est:deriva} with $\textsf{P}_\textsf{5}$ to approximate $\{ g_{\Gamma}^{(p)}\}_{p=0}^4$.
		
\end{itemize}

	\item For  1D and 2D functions used in \cref{thm:Robin:Gamma1}.	Let $\textsf{Z}=\{(x_0+ \textsf{i}\textsf{h},y_j\pm \textsf{j}\textsf{h}) : 0\le \textsf{i},\textsf{j}\le 2^3,  \textsf{h}=h/2^3\}$,  $\textsf{z}^*=(\textsf{x}^*,\textsf{y}^*)=(x_0,y_j)$. Then we  use \eqref{est:deriva} with $\textsf{P}_\textsf{5}$ and  $\textsf{P}_\textsf{4}$ to approximate $\{a^{(m,n)}\}_{ (m,n)\in \ind_{5}}$ and $\{f^{(m,n)}\}_{ (m,n)\in  \ind_{4}}$  respectively,   use \eqref{est:deriva} with  $\Gamma_1\cap\textsf{Z}$ and $\textsf{P}_\textsf{5}$ to approximate $\{\alpha^{(n)}\}_{n=0}^5$ and $\{g_1^{(n)}\}_{n=0}^5$.

	\item For 1D and 2D functions used in \cref{thm:Corner:1}.	 Let $\textsf{Z}=\{(x_0+ \textsf{i}\textsf{h},y_0+ \textsf{j}\textsf{h}) : 0\le \textsf{i},\textsf{j}\le 2^4,  \textsf{h}=h/2^4\}$,  $\textsf{z}^*=(\textsf{x}^*,\textsf{y}^*)=(x_0,y_0)$. Then we  use \eqref{est:deriva} with $\textsf{P}_\textsf{5}$ and  $\textsf{P}_\textsf{4}$ to approximate $\{a^{(m,n)}\}_{ (m,n)\in \ind_{5}}$ and $\{f^{(m,n)}\}_{ (m,n)\in  \ind_{4}}$  respectively,   use \eqref{est:deriva} with  $(\Gamma_1\cup \Gamma_3 \cup (x_0,y_0))\cap\textsf{Z}$ and $\textsf{P}_\textsf{5}$ to approximate $\{\alpha^{(n)}\}_{n=0}^5$,  $\{g_1^{(n)}\}_{n=0}^5$, $\{\beta^{(m)}\}_{m=0}^5$, and $\{g_3^{(m)}\}_{m=0}^5$.
\end{itemize}

	\section{Numerical experiments}
	\label{sec:numerical}
Let $\Omega=(l_1,l_2)\times(l_3,l_4)$ with
$l_4-l_3=N_0(l_2-l_1)$ for some positive integer $N_0$. For a given $J\in \NN$, we define
$h:=(l_2-l_1)/N_1$ with $N_1:=2^J$ and let
$x_i=l_1+ih$ and
$y_j=l_3+jh$ for $i=0,1,\dots,N_1$ and $j=0,1,\dots,N_2$ with $N_2:=N_0N_1$.
Let
$u(x,y)$ be the exact solution of \eqref{Qeques2} and $(u_{h})_{i,j}$ be a numerical approximated solution at $(x_i, y_j)$ using the mesh size $h$.
Then we  quantify the order of convergence  of the proposed hybrid FDM  by the following errors
\[
\begin{split}
& \|u_h-u\|_\infty
:=\max_{0\le i\le N_1, 0\le j\le N_2} \left|(u_h)_{i,j}-u(x_i,y_j)\right|,\quad \text{if the exact solution $u$ is known},\\
& \|u_{h}-u_{h/2}\|_\infty:=\max_{0\le i\le N_1,0\le j\le N_2} \left|(u_{h})_{i,j}-(u_{h/2})_{2i,2j}\right|, \quad \text{if the exact solution $u$ is unknown}.
\end{split}
\]
Before presenting several numerical examples, we make some remarks. First of all, to set up our FDMs at an irregular point near a base point $(x_i^*, y_j^*)\in \Gamma$, we only need a local parametric equation describing $\Gamma$ near $(x_i^*,y_j^*)$, and
the uniqueness of \eqref{tranmiss:cond} guarantees that our proposed FDMs in this paper are independent of the choice of the local parametric equations of $\Gamma$.
Hence, the essential 1D data $\gd, \gn$ on the interface $\Gamma$ in \eqref{Qeques2} can be given by any chosen local parametric equation of $\Gamma$. Second, in some applications, the 1D data $\gd,\gn$ along $\Gamma$ only depend on the geometry (such as the curvature) of $\Gamma$. Our proposed FDMs can handle it easily.
We present \cref{hybrid:ex3}, where $\gd,\gn$ at any point $p\in \Gamma$ are functions of the curvature of $\Gamma$ at $p\in \Gamma$.
Third, though theoretically our proposed FDMs employ high order (partial) derivatives of given/known data, for all our numerical examples, we always use the
numerical technique stated in \cref{Estimate:derivatives} to estimate all needed high order (partial) derivatives by only using function values without losing accuracy and performance.
Fourth, we provide \cref{hybrid:ex1} where $\Gamma$ is described by a level set $\psi(x,y)=0$. The Implicit Function Theorem theoretically guarantees a local parametric equation near a base point and their associated derivatives can be computed without explicitly solving $\psi(x,y)=0$. Without using the Implicit Function Theorem, we can easily obtain some points $(x,y)\in \Gamma$ satisfying $\psi(x,y)=0$ (e.g., if $x$ is given, then the $y$ value(s) can be computed from $\psi(x,y)=0$ by Newton method) and then we can use the function values at these points to approximate the needed derivatives for our FDMs.

\subsection{Two numerical examples with known $u$}
%
%
%
\begin{example}\label{hybrid:ex1}
	\normalfont
Let $\Omega=(-2.5,2.5)^2$ and
the functions in \eqref{Qeques2} are given by
\begin{align*}
	& \Gamma=\{ (x,y) : \ x^4+2y^4-2=0\}, \\
	& \Omega_{+}=\{(x,y)\in \Omega : x^4+2y^4-2>0\},\qquad a_{+}=2+\sin(x)\sin(y), \\
	&\Omega_{-}=\{(x,y)\in \Omega : x^4+2y^4-2<0\},
	\qquad a_{-}=10^3(2+\sin(x)\sin(y)), \qquad g=-30, \qquad g_{\Gamma}=0,\\
	&u_{+}=\sin(2 x)\sin(2 y)(x^4+2y^4-2)+1,
	\qquad u_{-}=10^{-3}\sin(2 x)\sin(2 y)(x^4+2y^4-2)+31,\\
	& \tfrac{\partial u}{\partial \nv}+(\cos(y)+2)u =g_1 \text{ on } \Gamma_1, \quad u =g_2 \text{ on } \Gamma_2,\quad \tfrac{\partial u}{\partial \nv} +(\sin(x)+2) u=g_3 \text{ on } \Gamma_3, \quad
	u =g_4 \text{ on } \Gamma_4,	
\end{align*}
the other functions $f_{+}=f_{-}$, $g_1, \ldots,g_4$ in \eqref{Qeques2} can be obtained by plugging the above functions into \eqref{Qeques2}. {\color{blue}{$\|u_h\|_{\infty}=106.95$ with $J=9$.}}
	The numerical results are presented in \cref{hybrid:table:ex1,hybrid:table:ex2} and \cref{hybrid:fig:ex1}.	
\end{example}
\begin{table}[htbp]
	\caption{Performance in \cref{hybrid:ex1,hybrid:ex2,hybrid:ex3,hybrid:ex4}  of the proposed hybrid FDM.}
		\scalebox{0.81}{
	\centering
	\setlength{\tabcolsep}{1mm}{
		\begin{tabular}{c|c|c|c|c|c|c|c|c|c|c|c}
			\hline	
			 \multicolumn{3}{c|}{\cref{hybrid:ex1} with $h=\tfrac{5}{2^J}$} &
			 \multicolumn{3}{c|}{\cref{hybrid:ex2} with $h=2^{2-J}$} &
			 \multicolumn{3}{c|}{\cref{hybrid:ex3} with $h=\tfrac{3}{2^J}$} &
			 \multicolumn{3}{c}{\cref{hybrid:ex4} with $h=2^{2-J}$ } \\
			\cline{1-12}			
			$J$
			 & $\|u_{h}-u\|_{\infty}$
			
			&order &
			
		    $J$   & $\|u_{h}-u\|_{\infty}$
			
			&order   &
			
			$J$  & $\|u_{h}-u_{h/2}\|_{\infty}$
			
			&order   &
			
			$J$  & $\|u_{h}-u_{h/2}\|_{\infty}$
			
			&order  \\
			\hline
   &   &   &   &   &   &   &   &   &4   &1.31113E+06   & \\
5   &3.87139E+01   &   &5   &2.25635E+06   &   &   &   &   &5   &4.75213E+04   &4.79 \\
6   &7.69404E-01   &5.65   &6   &6.11924E+04   &5.20   &6   &1.31810E+02   &   &6   &6.84381E+02   &6.12 \\
7   &1.62588E-02   &5.56   &7   &5.49910E+02   &6.80   &7   &3.04318E+00   &5.44   &7   &5.23606E+00   &7.03\\
8   &1.57108E-04   &6.69   &8   &4.90656E+00   &6.81   &8   &4.78581E-02   &5.99   &8   &9.05642E-02   &5.85\\
9   &1.99369E-06   &6.30   &9   &1.03630E-01   &5.57   &9   &7.89042E-04   &5.92   &9   &1.18424E-03   &6.26\\
	\hline
   &    \textbf{Average Order:} &  \textbf{6.05}   &  &  \textbf{Average Order:} &  \textbf{6.09}   &   & \textbf{Average Order:}  &  \textbf{5.78}   &   & \textbf{Average Order:}  &  \textbf{6.01} \\		
			\hline
	\end{tabular}}
	\label{hybrid:table:ex1}}
\end{table}	
\begin{table}[htbp]
	\caption{ 	 CPU time to compute $u_h$ with $J=9$ in \cref{hybrid:ex1,hybrid:ex2,hybrid:ex3,hybrid:ex4} by the proposed hybrid FDM.}
	\scalebox{0.81}{
		\centering
		\setlength{\tabcolsep}{5mm}{
			\begin{tabular}{c|c|c|c|c}
				\hline	
				\multicolumn{1}{c|}{} &
				\multicolumn{1}{c|}{\cref{hybrid:ex1}} &
				\multicolumn{1}{c|}{\cref{hybrid:ex2}} &
				\multicolumn{1}{c|}{\cref{hybrid:ex3}} &
				\multicolumn{1}{c}{\cref{hybrid:ex4}}  \\
				\cline{1-5}			
			    Stencil generations (regular points)	& 3.970 minutes & 2.756 minutes & 2.261 minutes  & 2.258 minutes \\
				\hline
				Stencil generations	 (irregular points) & 1.701 minutes & 3.058 minutes & 2.064 minutes  & 4.122 minutes \\
				\hline
			    Form linear systems	& 0.104 seconds & 0.107 seconds &  0.096 seconds & 0.118 seconds \\
				\hline
				Solve linear systems	& 1.112 seconds & 1.265 seconds & 1.184 seconds & 1.357 seconds\\
				\hline
				Total 	& 5.691 minutes &   5.837 minutes &  4.347 minutes & 6.405 minutes \\
				\hline
		\end{tabular}}
		\label{hybrid:table:ex2}}
\end{table}	
\begin{figure}[htbp]
	\centering
	\begin{subfigure}[b]{0.24\textwidth}
	 \includegraphics[width=4.75cm,height=4.75cm]{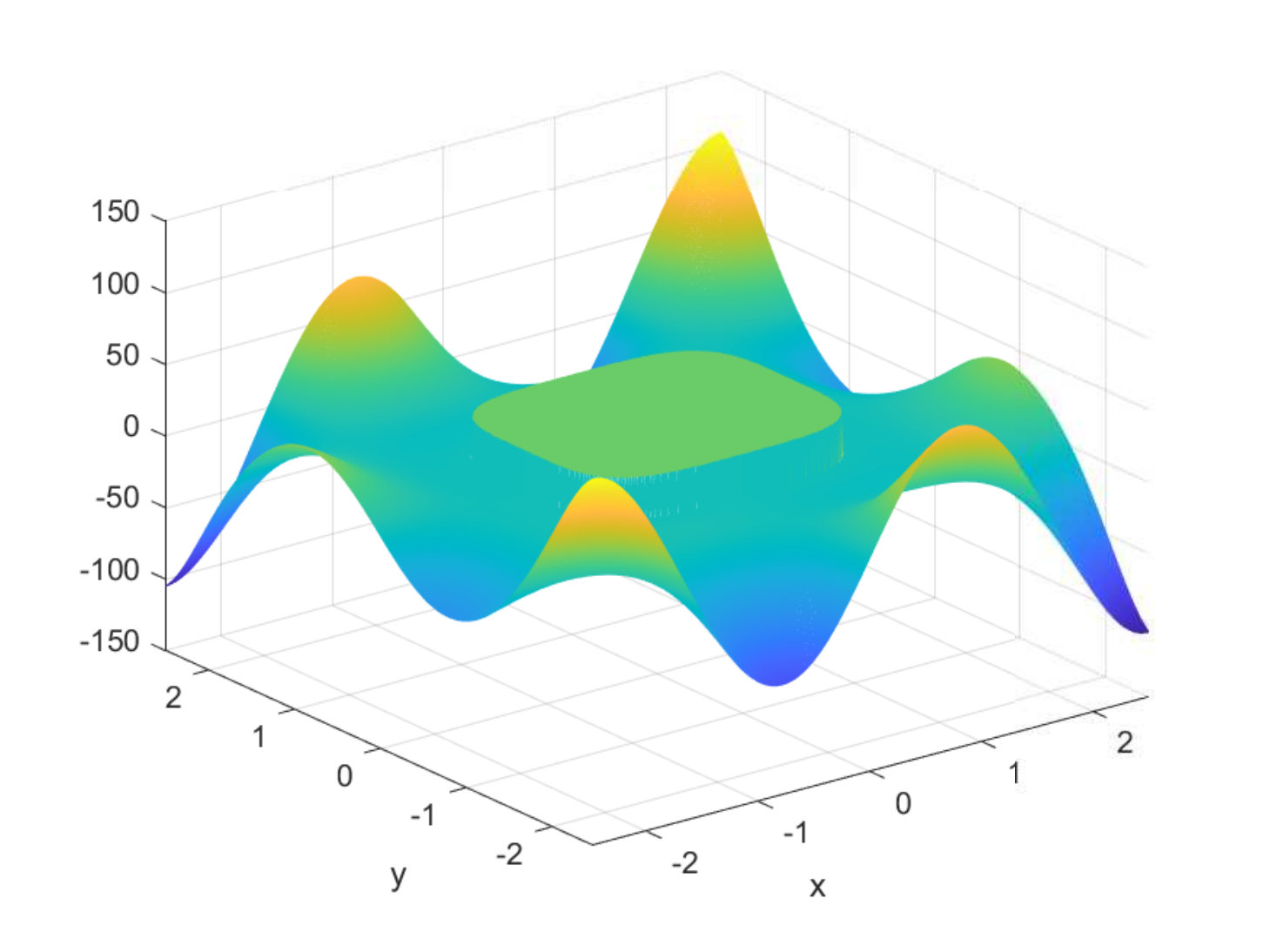}
\end{subfigure}
\begin{subfigure}[b]{0.24\textwidth}
	 \includegraphics[width=4.75cm,height=4.75cm]{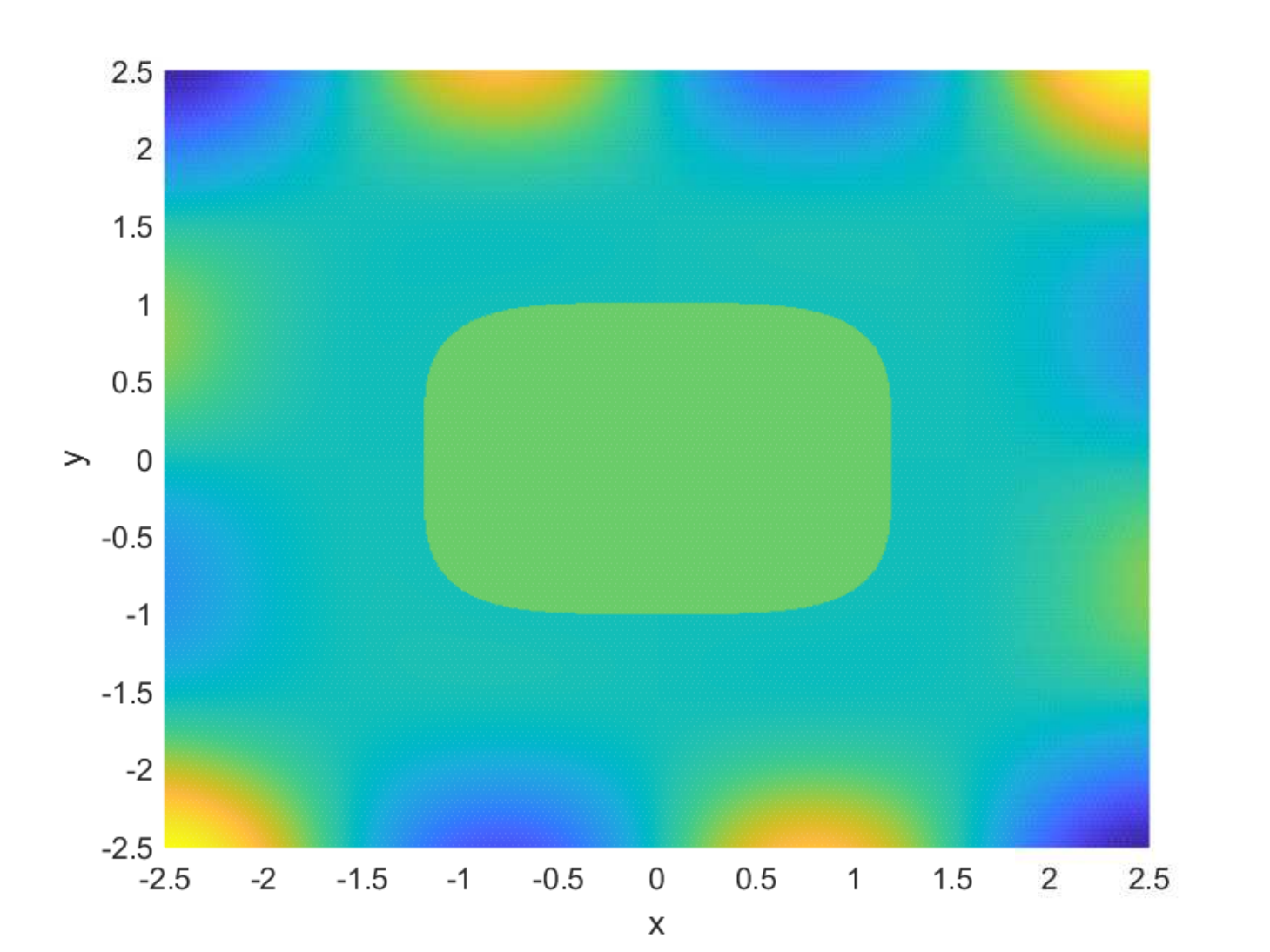}
\end{subfigure}
\begin{subfigure}[b]{0.24\textwidth}
	 \includegraphics[width=4.75cm,height=4.75cm]{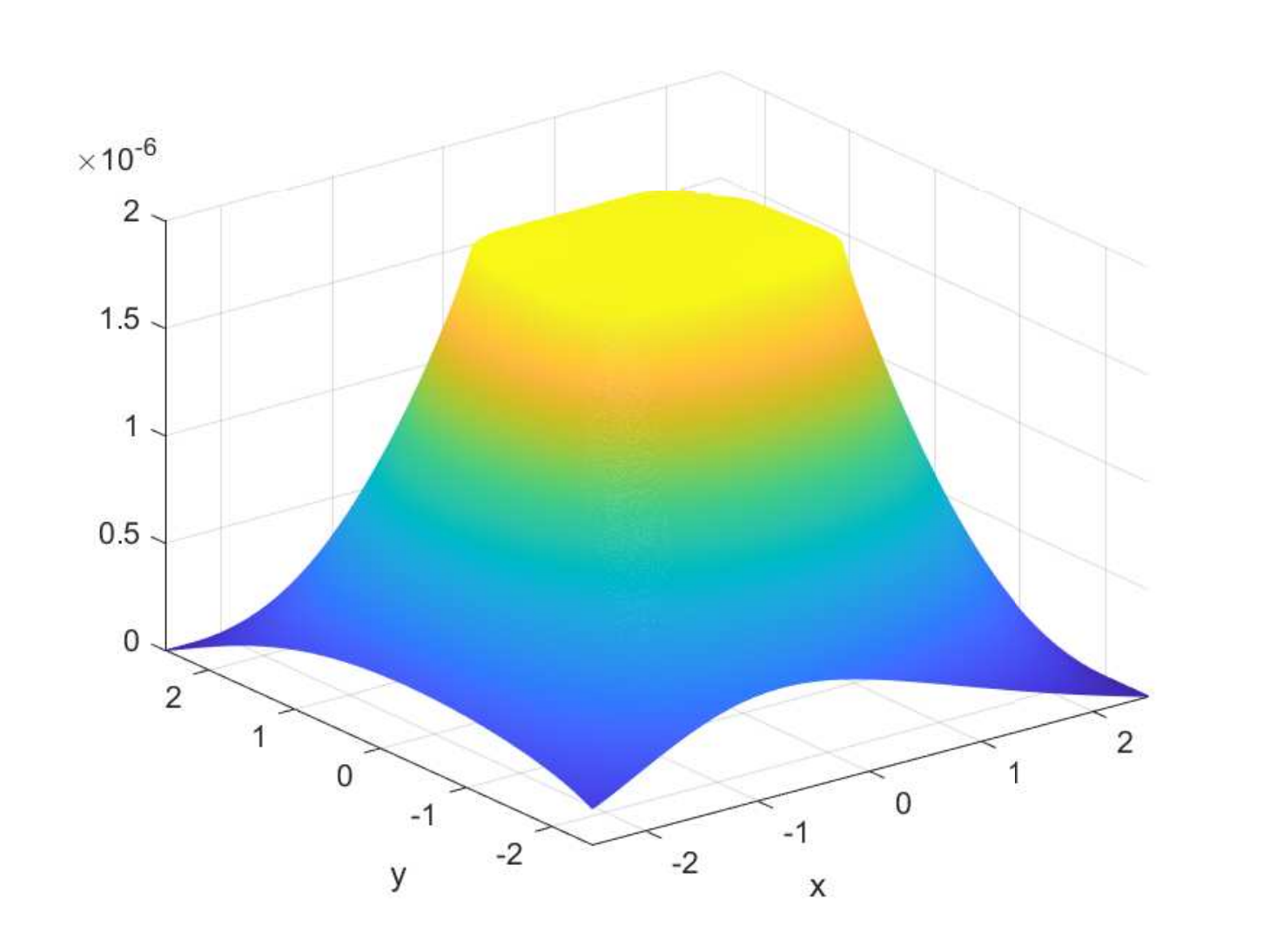}
\end{subfigure}
\begin{subfigure}[b]{0.24\textwidth}
	 \includegraphics[width=4.75cm,height=4.75cm]{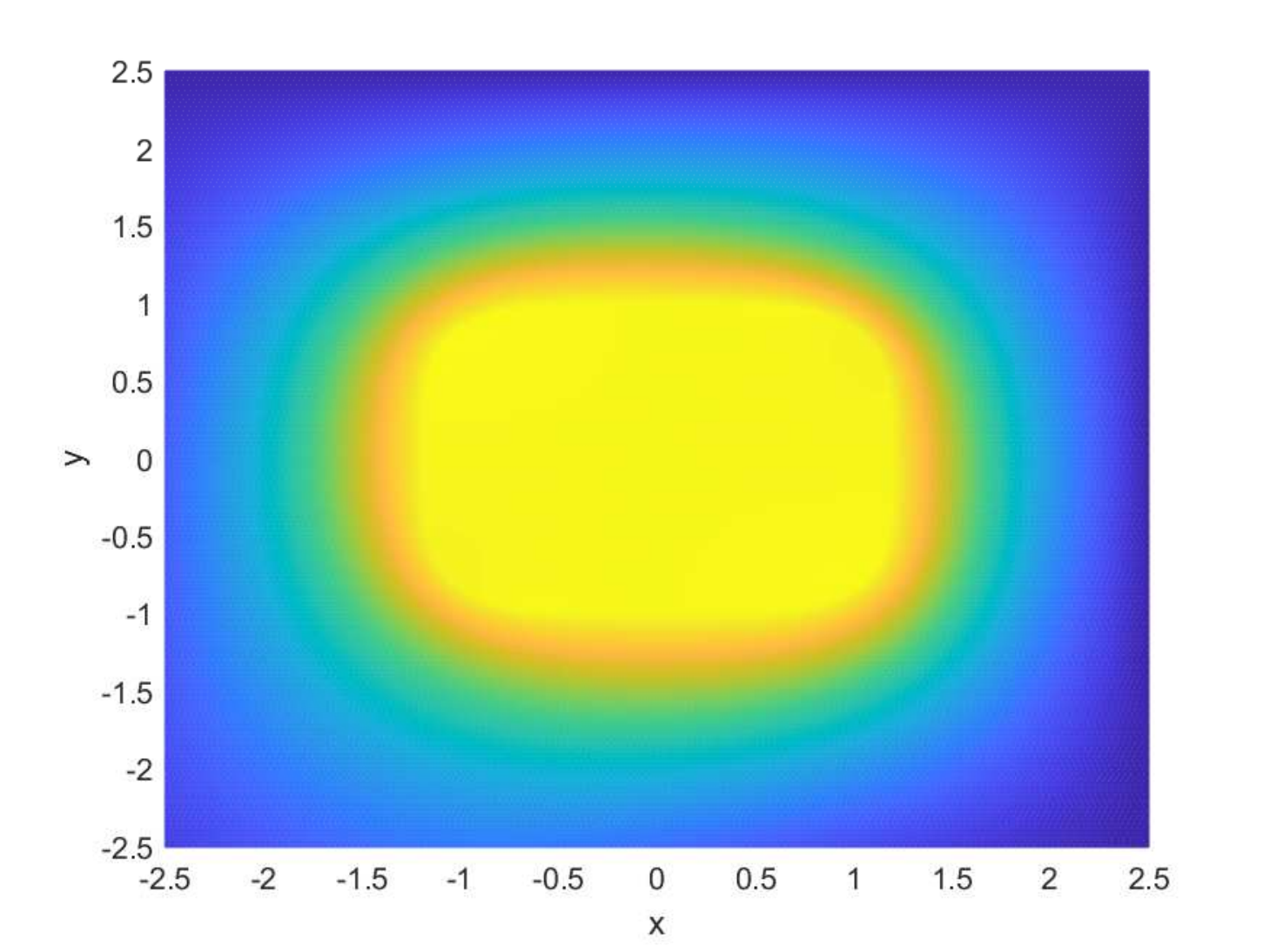}
\end{subfigure}
	\caption
	{Performance  of the  proposed hybrid FDM in case of  \cref{hybrid:ex1}: the numerical solution $(u_h)_{i,j}$  (first and second panels), and  $|(u_h)_{i,j}-u(x_i,y_j)|$ (third and fourth panels), at all grid points $(x_i,y_j)$ on $\overline{\Omega}=[-2.5,2.5]^2$ with $h=\tfrac{5}{2^{9}}$.
Note that the second and fourth panels are just the 2D color map formats corresponding to the first and third panels, respectively.
}
	\label{hybrid:fig:ex1}
\end{figure}	
%
%
%
\begin{example}\label{hybrid:ex2}
	\normalfont
	Let $\Omega=(-2,2)^2$ and
	the functions in \eqref{Qeques2} are given by
	\begin{align*}
		& \Gamma=\{ (x,y) : \ x(\theta)=(\pi/3+0.4\sin(8\theta))\cos(\theta),\ y(\theta)=(\pi/3+0.4\sin(8\theta))\sin(\theta)\}, \\
		& \Omega_{+}=\{(x,y)\in \Omega : x^2(\theta)+y^2(\theta)> (\pi/3+0.4\sin(8\theta))^2\},\qquad a_{+}=1, \qquad f_{+}=\cos(x),\\
		& \Omega_{-}=\{(x,y)\in \Omega : x^2(\theta)+y^2(\theta)< (\pi/3+0.4\sin(8\theta))^2\},\qquad a_{-}=10^{-3}, \qquad f_{-}=(3\pi)^2\sin(3\pi y),\\
	& u_{+}=\cos(x),
\qquad  u_{-}=10^3\sin(3\pi y)+1500,\\
& u =g_1=\cos(-2) \text{ on } \Gamma_1, \quad u =g_2=\cos(2) \text{ on } \Gamma_2,\quad u =g_3=\cos(x) \text{ on } \Gamma_3, \quad u =g_4=\cos(x) \text{ on } \Gamma_4,
	\end{align*}
the two jump functions  $g\ne0,g_{\Gamma}\ne0$ in \eqref{Qeques2} can be obtained by plugging the above functions into \eqref{Qeques2}. Note that $g, g_{\Gamma}$ are two non-constant functions and {\color{blue}{ $\|u_h\|_{\infty}=2499.98$ with $J=9$.}}
	The numerical results are presented in \cref{hybrid:table:ex1,hybrid:table:ex2} and \cref{hybrid:fig:ex2}.	
\end{example}
\begin{figure}[htbp]
	\centering
	\begin{subfigure}[b]{0.24\textwidth}
		 \includegraphics[width=4.75cm,height=4.75cm]{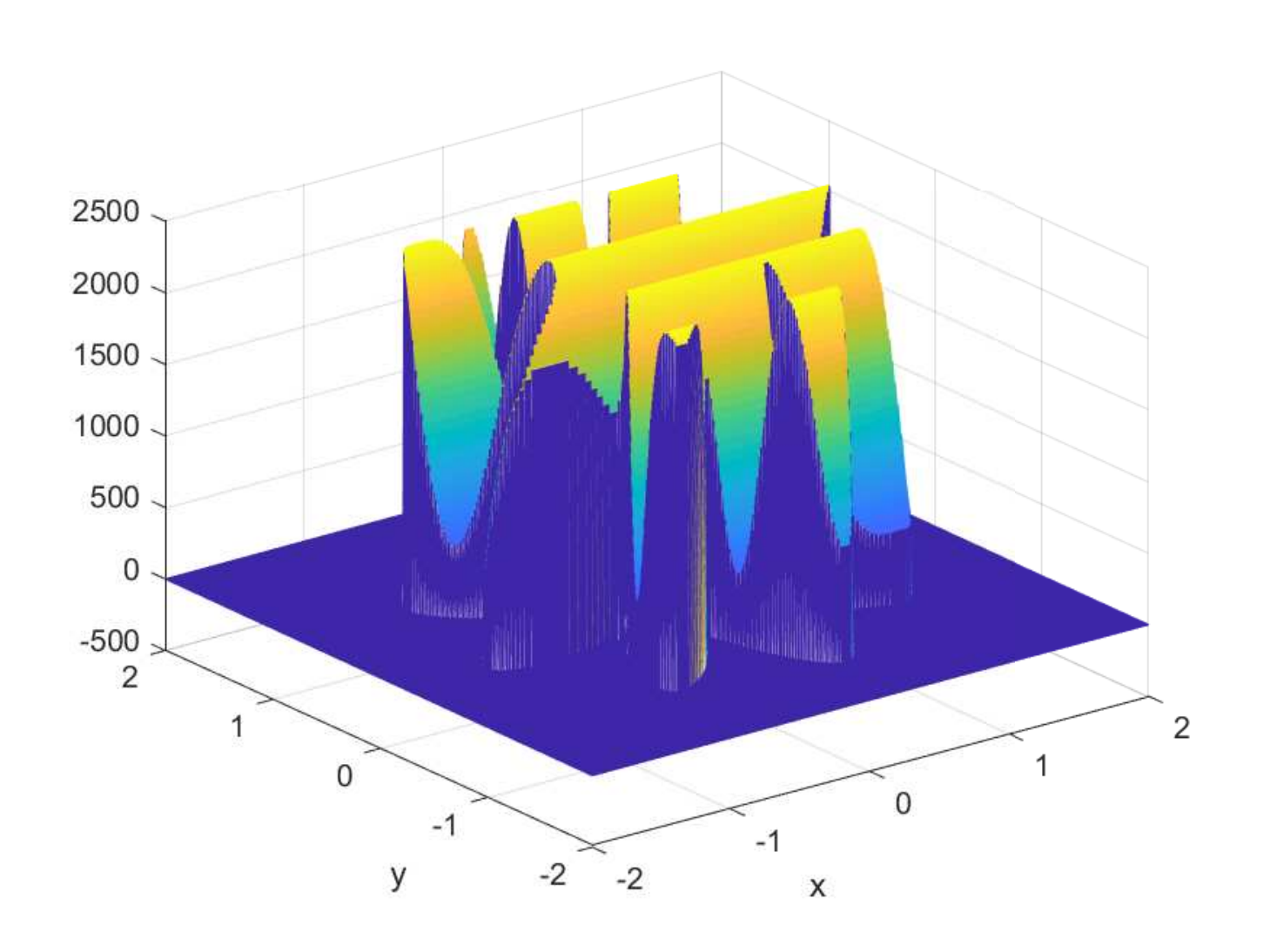}
	\end{subfigure}
	\begin{subfigure}[b]{0.24\textwidth}
		 \includegraphics[width=4.75cm,height=4.75cm]{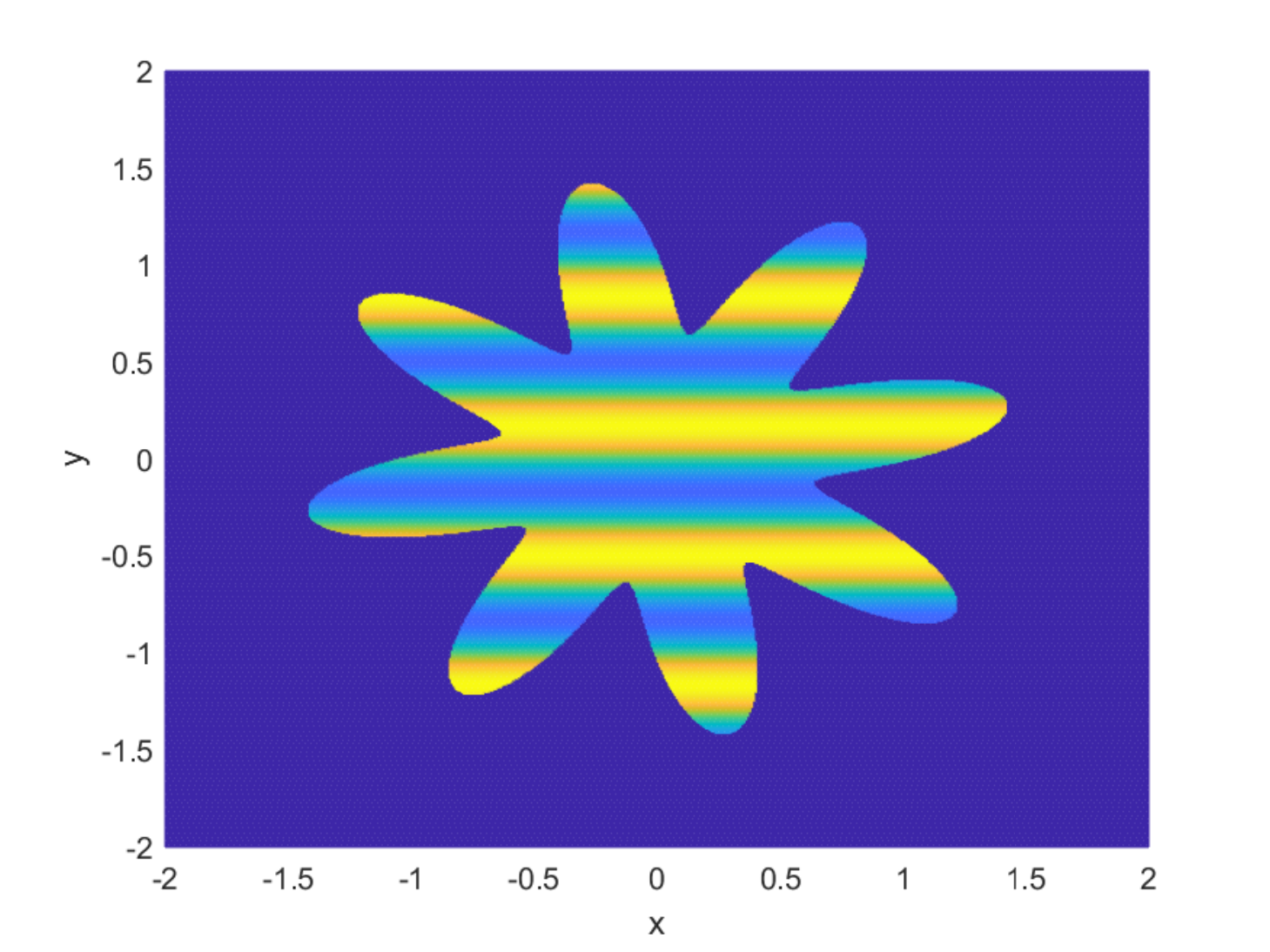}
	\end{subfigure}
	\begin{subfigure}[b]{0.24\textwidth}
		 \includegraphics[width=4.75cm,height=4.75cm]{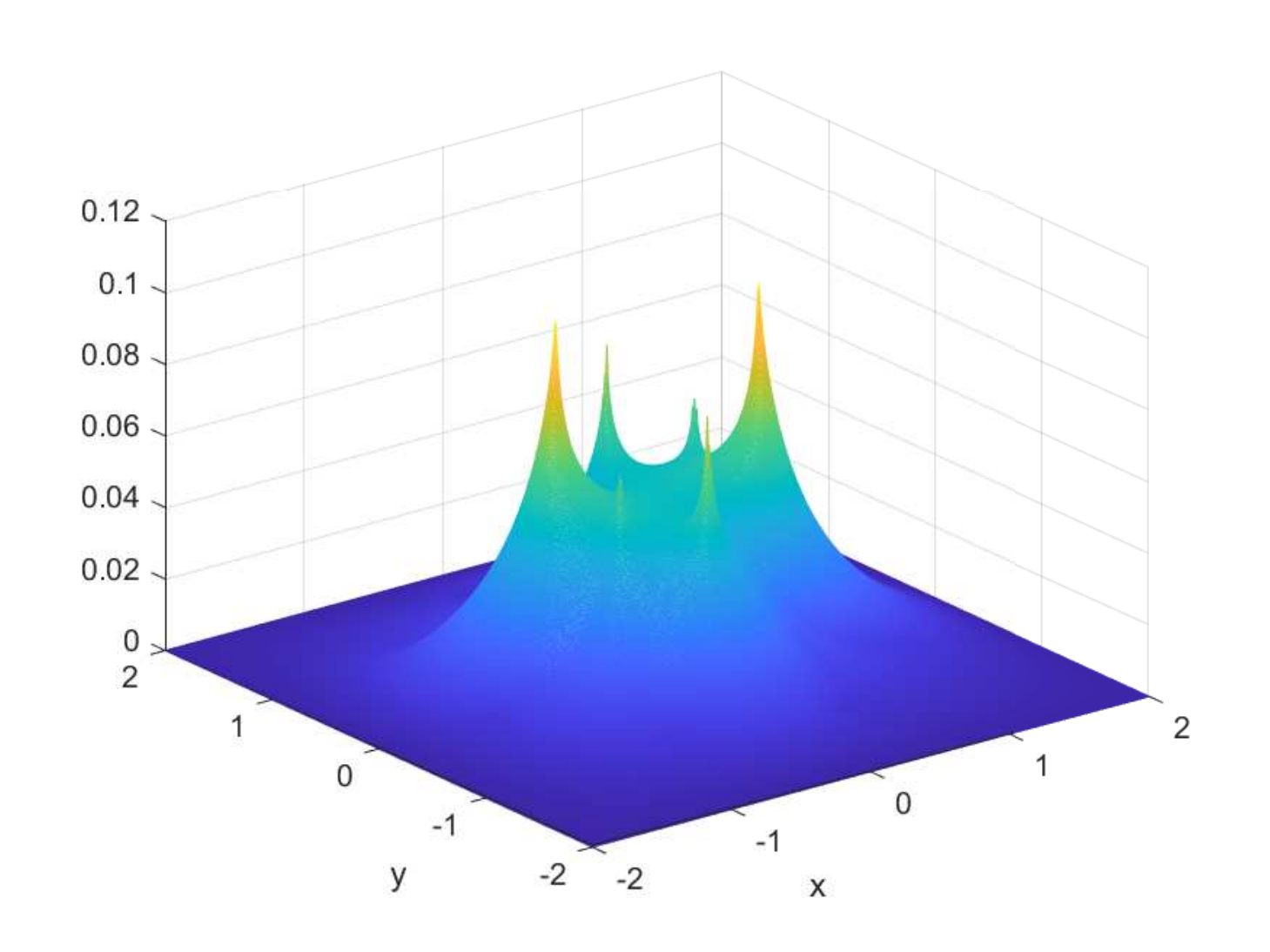}
	\end{subfigure}
	\begin{subfigure}[b]{0.24\textwidth}
		 \includegraphics[width=4.75cm,height=4.75cm]{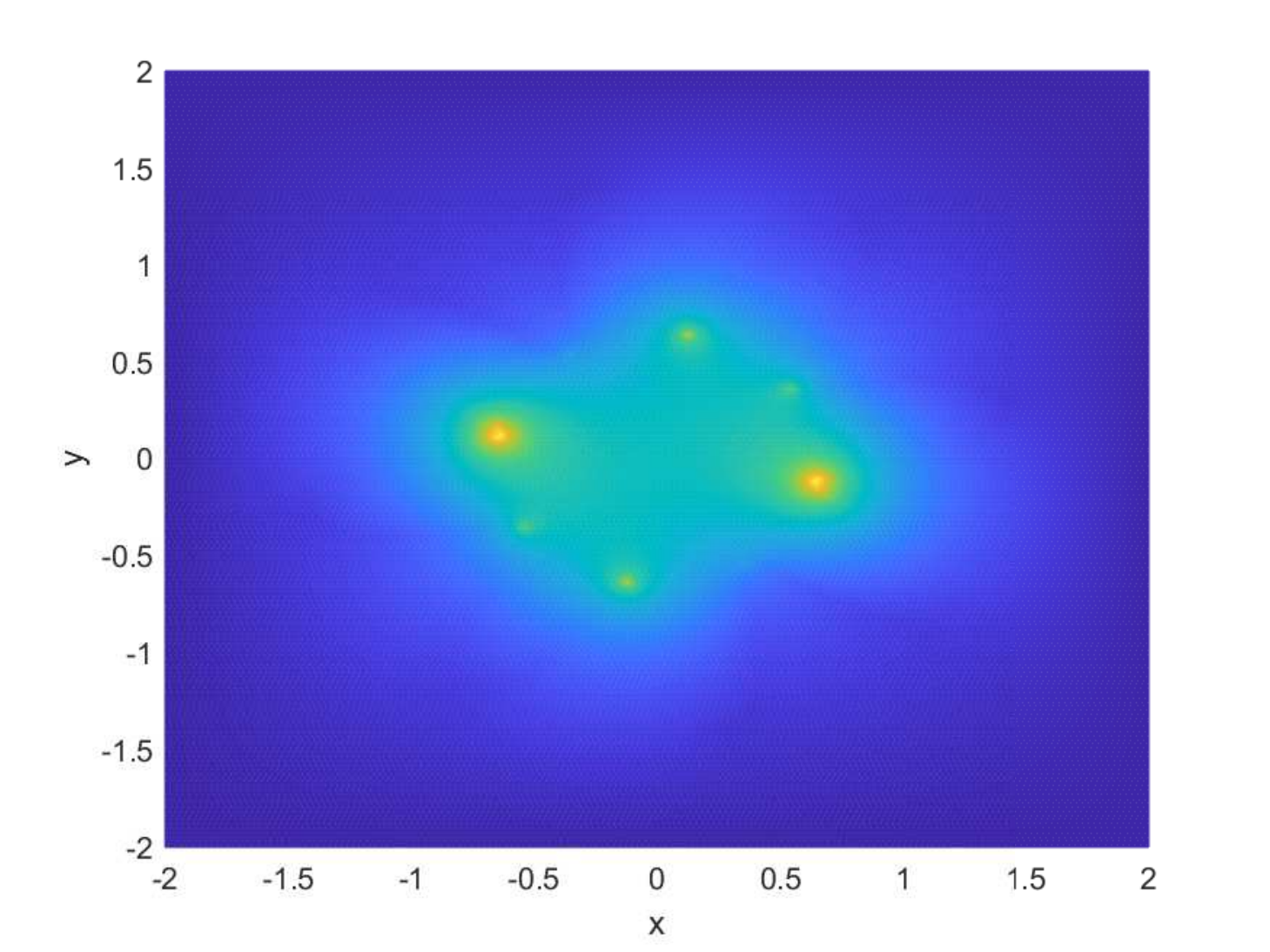}
	\end{subfigure}
	\caption
	{Performance of the proposed hybrid FDM in case of \cref{hybrid:ex2} : the numerical solution $(u_h)_{i,j}$ (first and second panels) and $|(u_h)_{i,j}-u(x_i,y_j)|$ (third and fourth panels) at all grid points $(x_i,y_j)$ on $\overline{\Omega}=[-2,2]^2$ with $h=2^{-7}$. Note that the second and fourth panels are just the 2D color map formats corresponding to the first and third panels, respectively.
}
	\label{hybrid:fig:ex2}
\end{figure}
\subsection{Two numerical examples with unknown $u$}
%
%

\begin{example}\label{hybrid:ex3}
	\normalfont
	Let $\Omega=(-3/2,3/2)^2$ and
	the functions in \eqref{Qeques2} are given by
	\begin{align*}
		& \Gamma=\{ (x,y) : \ x(\theta)=\cos(\theta),\ y(\theta)=1/2\sin(\theta)\}, \\
		& \Omega_{+}=\{(x,y)\in \Omega : x^2(\theta)+4y^2(\theta)> 1\},\qquad a_{+}=2+\sin(x+y),\qquad f_{+}=\cos(\pi x)\cos(\pi y),\\
		& \Omega_{-}=\{(x,y)\in \Omega : x^2(\theta)+4y^2(\theta)< 1\},		 \qquad a_{-}=10^{4}(2+\sin(x+y)),\qquad f_{-}=\sin( \pi(x-y)),\\
	    &g=\frac{|x'(\theta)y''(\theta)-x''(\theta)y'(\theta)|}{((x'(\theta))^2+(y'(\theta))^2)^{3/2}}-1, \qquad g_\Gamma=\frac{|x'(\theta)y''(\theta)-x''(\theta)y'(\theta)|}{((x'(\theta))^2+(y'(\theta))^2)^{3/2}}, \quad \mbox{for} \quad \theta\in [0,2\pi),\\
	& \tfrac{\partial u}{\partial \nv}+(\cos(y)+2)u =\sin(2\pi y) \text{ on } \Gamma_1, \qquad u =0 \text{ on } \Gamma_2,\\
	& \tfrac{\partial u}{\partial \nv} +(\sin(x)+2) u=\cos(\pi x) \text{ on } \Gamma_3, \qquad
u =0 \text{ on } \Gamma_4.
	\end{align*}
	Note that the exact solution $u$ is unknown in this example, and $\frac{|x'(\theta)y''(\theta)-x''(\theta)y'(\theta)|}{((x'(\theta))^2+(y'(\theta))^2)^{3/2}}$ is just the curvature of $\Gamma$ at the point $(x(\theta),y(\theta))\in \Gamma$. Therefore, the data $\gd,\gn$ on $\Gamma$ are functions depending only on the curvature of the interface $\Gamma$. {\color{blue}{$\|u_h\|_{\infty}=1.7822$ with $J=10$.}}
	The numerical results are presented in \cref{hybrid:table:ex1,hybrid:table:ex2} and \cref{hybrid:fig:ex3}.	
\end{example}
\begin{figure}[htbp]
	\centering
	\begin{subfigure}[b]{0.24\textwidth}
		 \includegraphics[width=4.66cm,height=4.66cm]{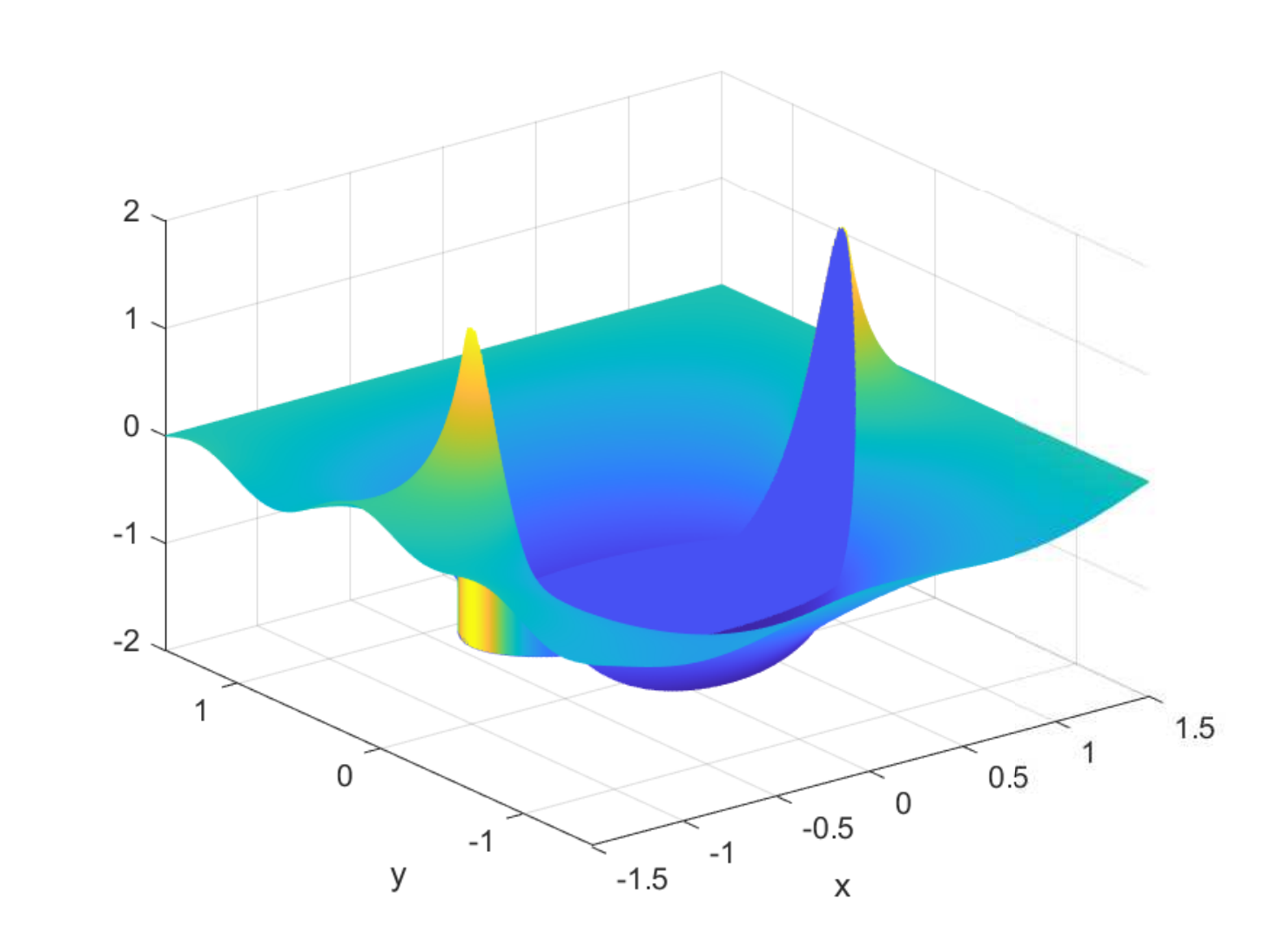}
	\end{subfigure}
	\begin{subfigure}[b]{0.24\textwidth}
		 \includegraphics[width=4.66cm,height=4.66cm]{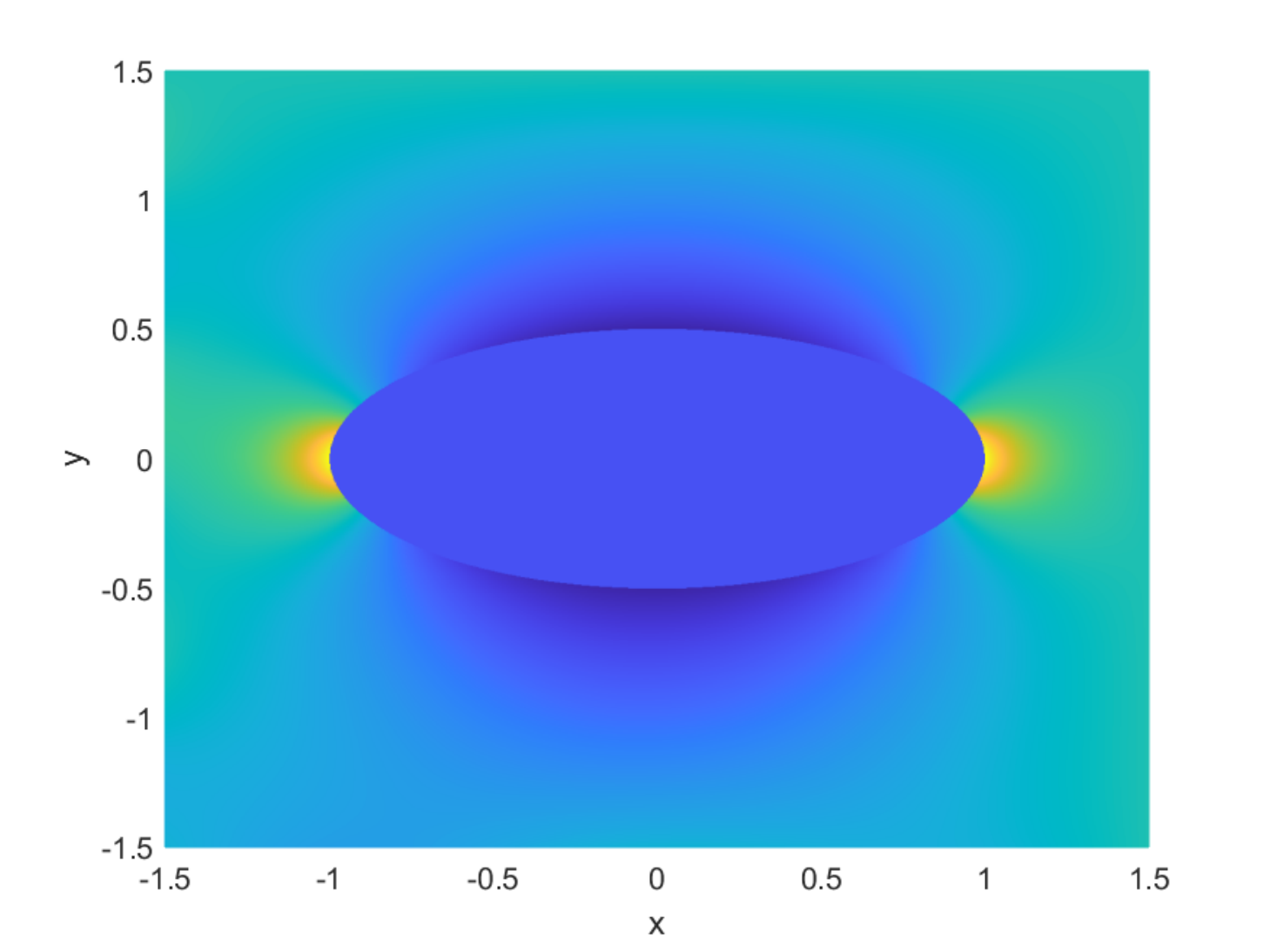}
	\end{subfigure}
	\begin{subfigure}[b]{0.24\textwidth}
		 \includegraphics[width=4.66cm,height=4.66cm]{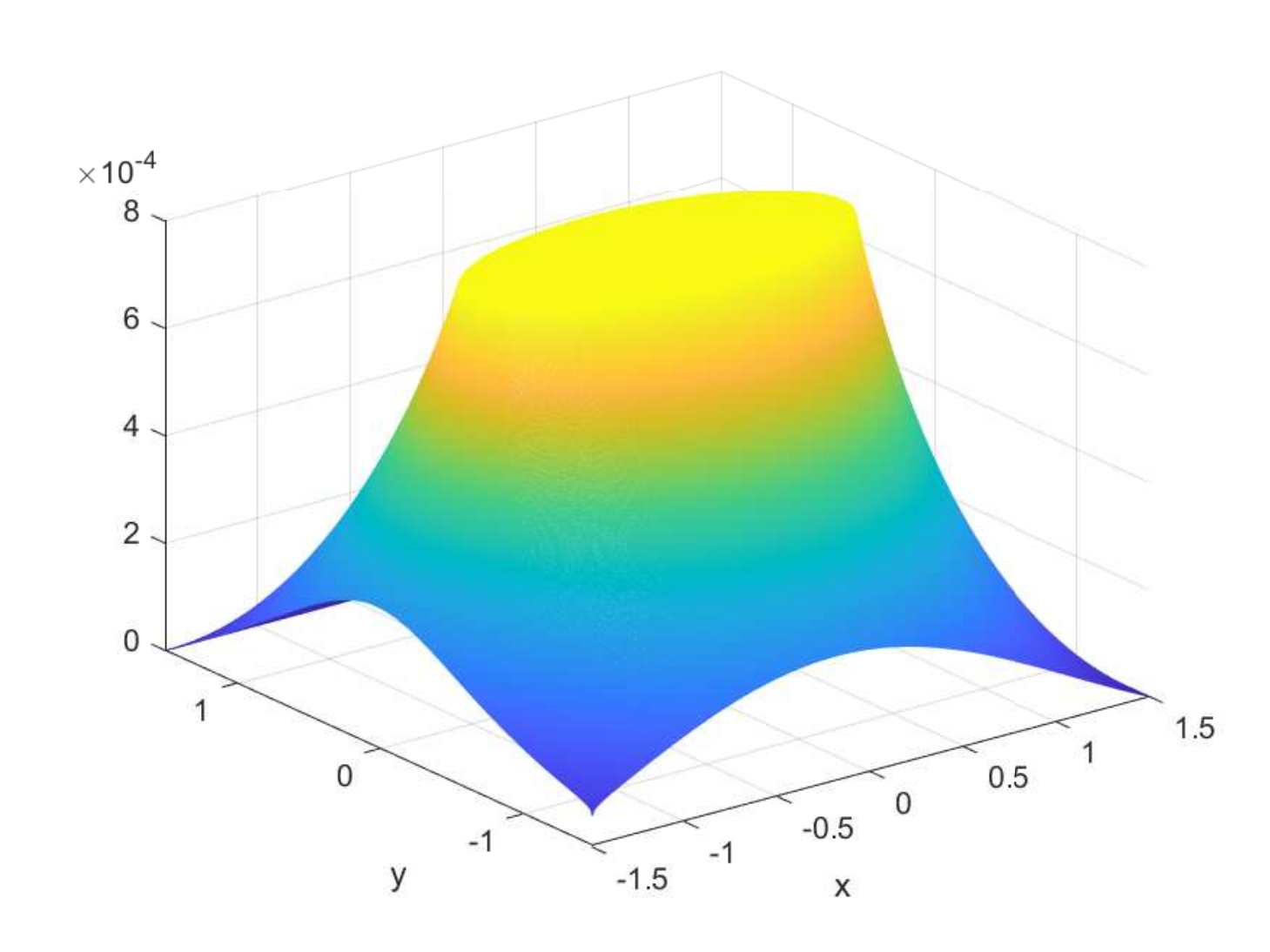}
	\end{subfigure}
	\begin{subfigure}[b]{0.24\textwidth}
		 \includegraphics[width=4.66cm,height=4.66cm]{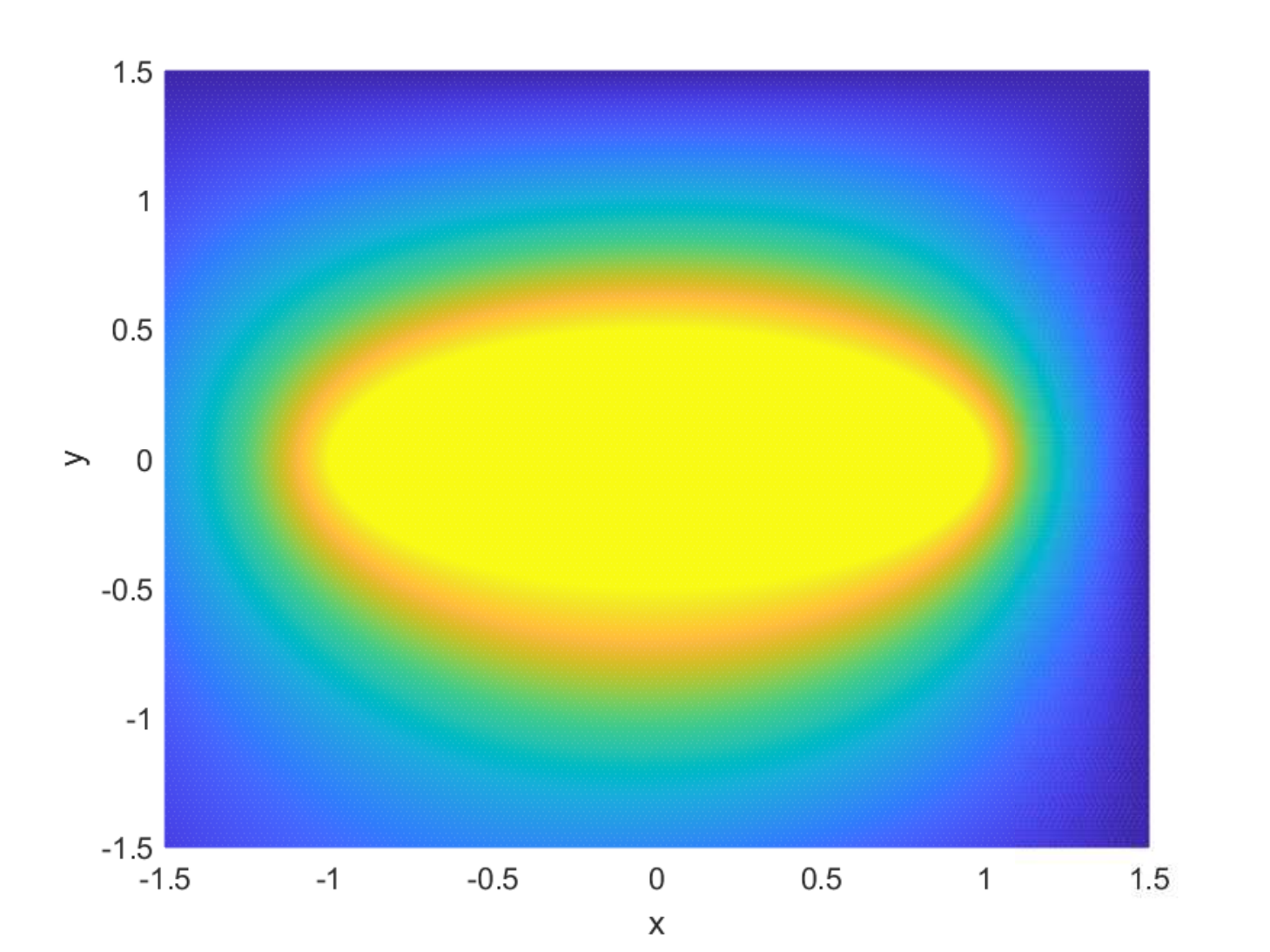}
	\end{subfigure}
	\caption
	{Performance of the proposed hybrid FDM in case of  \cref{hybrid:ex3}:   the numerical solution $(u_h)_{i,j}$ (first and second panels) at all grid points $(x_i,y_j)$ on $\overline{\Omega}=[-3/2,3/2]^2$ with $h=\tfrac{3}{2^{10}}$, and $|(u_h)_{i,j}-(u_{h/2})_{2i,2j}|$ (third and fourth panels) at all grid points $(x_i,y_j)$ on $\overline{\Omega}=[-3/2,3/2]^2$ with $h=\tfrac{3}{2^{9}}$.  Note that the second and fourth panels are just the 2D color map formats corresponding to the first and third panels, respectively.
}
	\label{hybrid:fig:ex3}
\end{figure}	
%
%
\begin{example}\label{hybrid:ex4}
	\normalfont
	Let $\Omega=(-2,2)^2$ and
	the functions in \eqref{Qeques2} are given by
	\begin{align*}
		& \Gamma=\{ (x,y) : \ x(\theta)=(\pi/3+0.4\sin(10\theta))\cos(\theta),\ y(\theta)=(\pi/3+0.4\sin(10\theta))\sin(\theta)\}, \\
		& \Omega_{+}=\{(x,y)\in \Omega : x^2(\theta)+y^2(\theta)> (\pi/3+0.4\sin(10\theta))^2\},\qquad a_{+}=10^{3}(2+\cos(x)\cos(y)),\\
		& \Omega_{-}=\{(x,y)\in \Omega : x^2(\theta)+y^2(\theta)< (\pi/3+0.4\sin(10\theta))^2\},		 \qquad a_{-}=2+\cos(x)\cos(y),\\
		&f_{+}=\sin(\pi x)\sin(\pi y),
		\qquad f_{-}=\cos(\pi x)\cos(\pi y), \\
		& g=-\sin(\theta)-1, \qquad g_{\Gamma}=\cos(\theta), \qquad \mbox{for} \qquad \theta\in [0,2\pi),\qquad  u =0 \text{ on } \partial\Omega.	
	\end{align*}
	Note that the exact solution $u$ is unknown in this example and {\color{blue}{$\|u_h\|_{\infty}=1.9992$ with $J=10$.}}
	The numerical results are presented in \cref{hybrid:table:ex1,hybrid:table:ex2} and \cref{hybrid:fig:ex4}.	
\end{example}
\begin{figure}[htbp]
	\centering
	\begin{subfigure}[b]{0.24\textwidth}
	 \includegraphics[width=4.75cm,height=4.75cm]{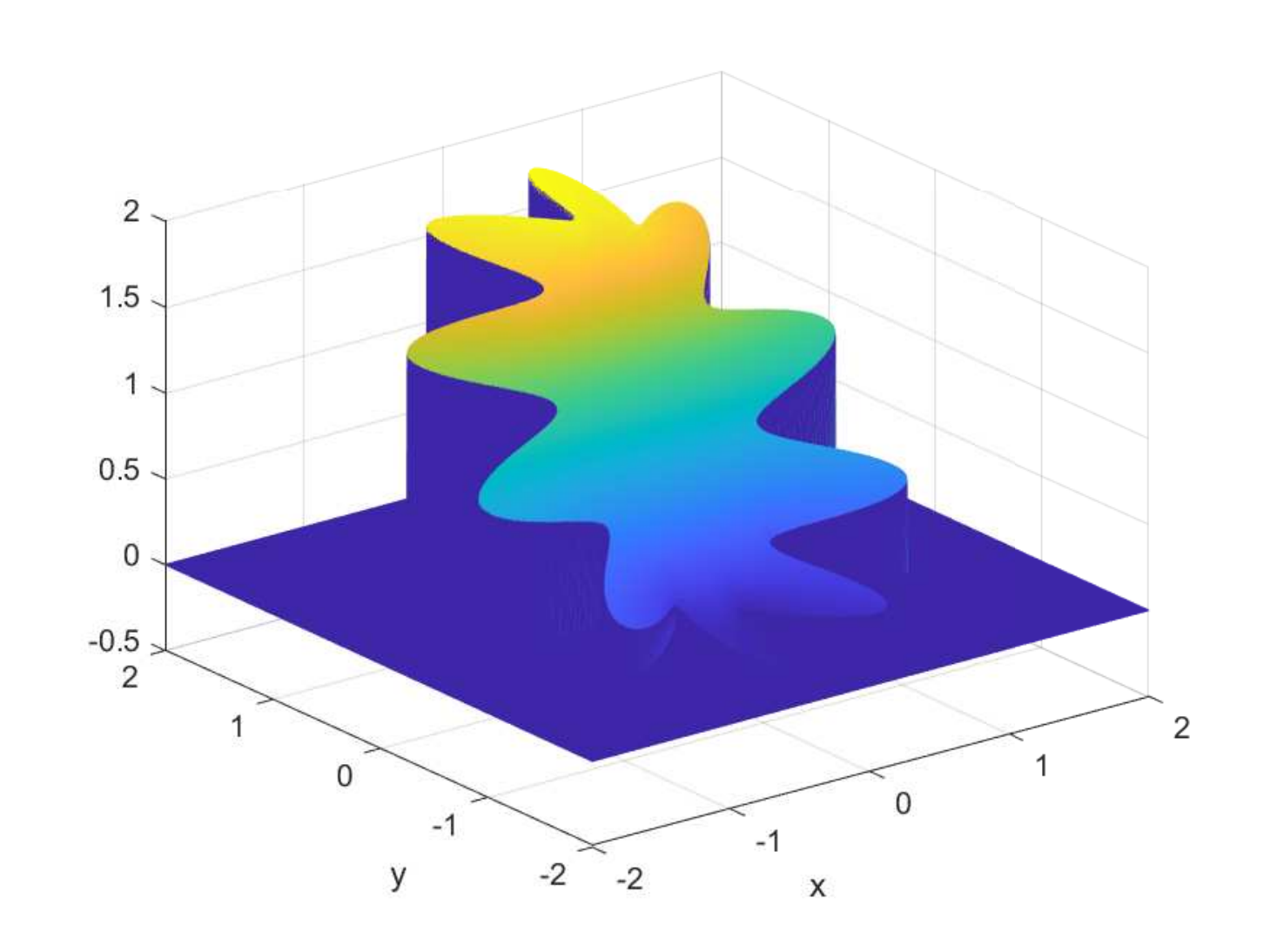}
\end{subfigure}
\begin{subfigure}[b]{0.24\textwidth}
	 \includegraphics[width=4.75cm,height=4.75cm]{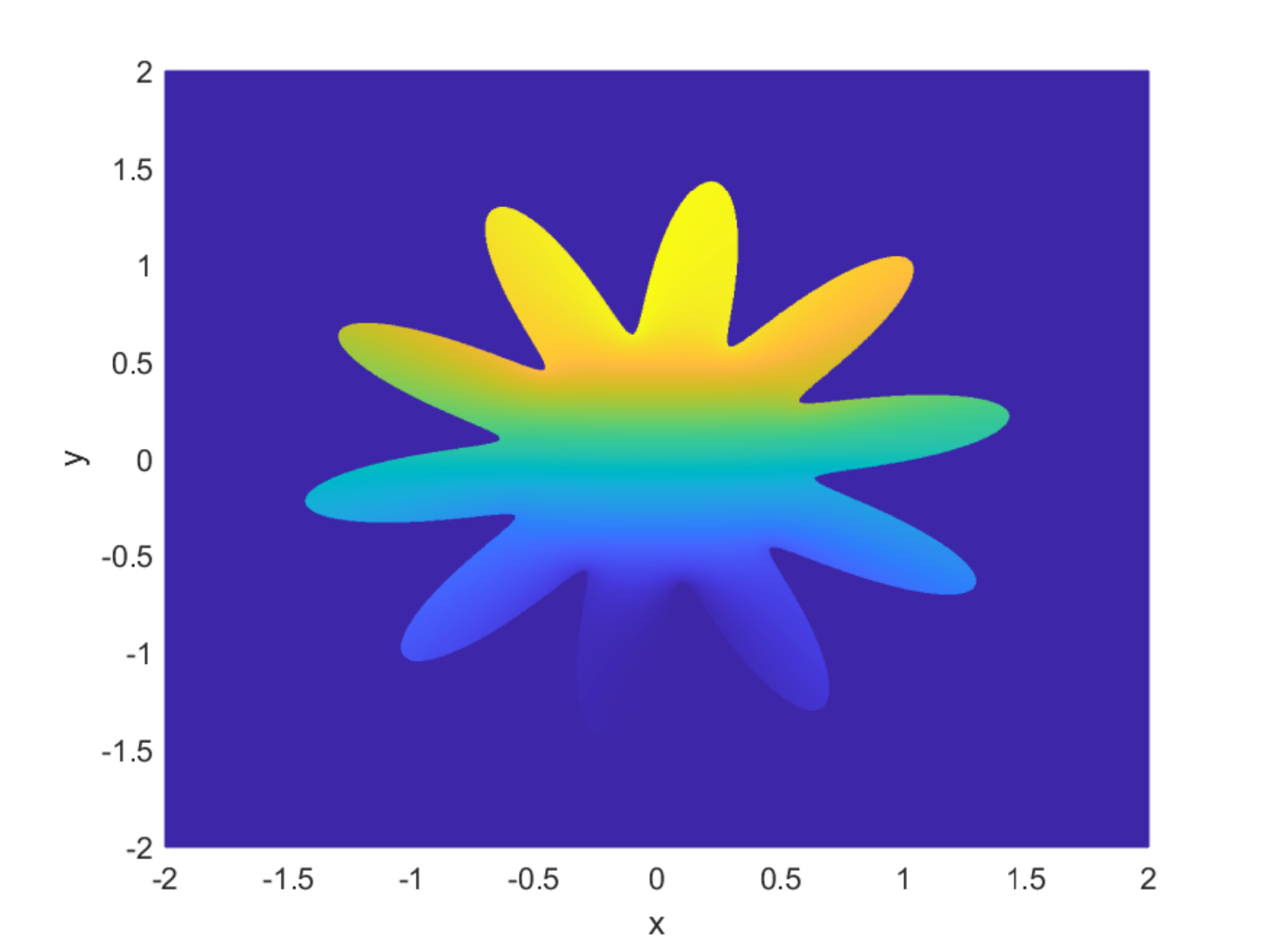}
\end{subfigure}
\begin{subfigure}[b]{0.24\textwidth}
	 \includegraphics[width=4.75cm,height=4.75cm]{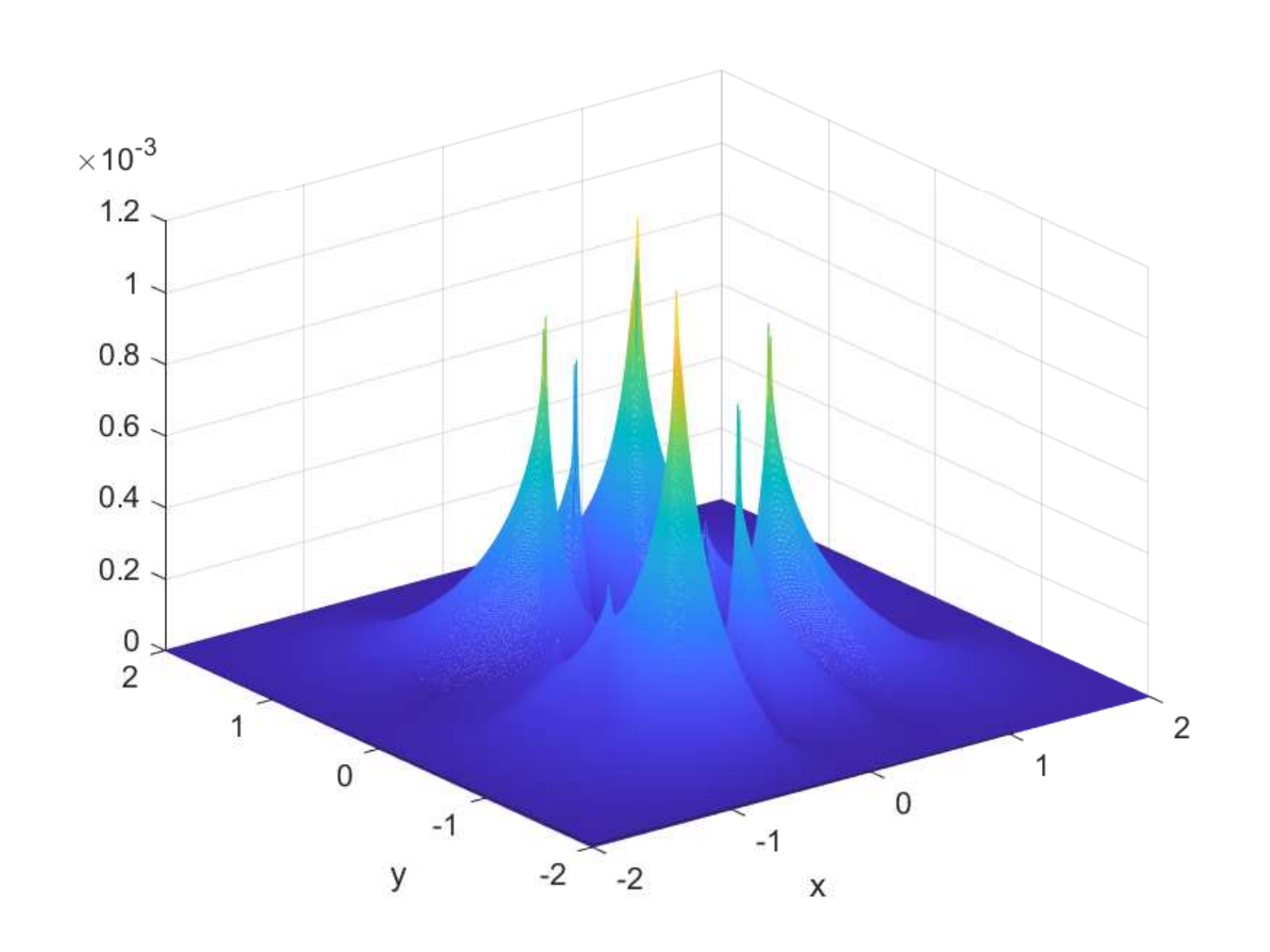}
\end{subfigure}
\begin{subfigure}[b]{0.24\textwidth}
	 \includegraphics[width=4.75cm,height=4.75cm]{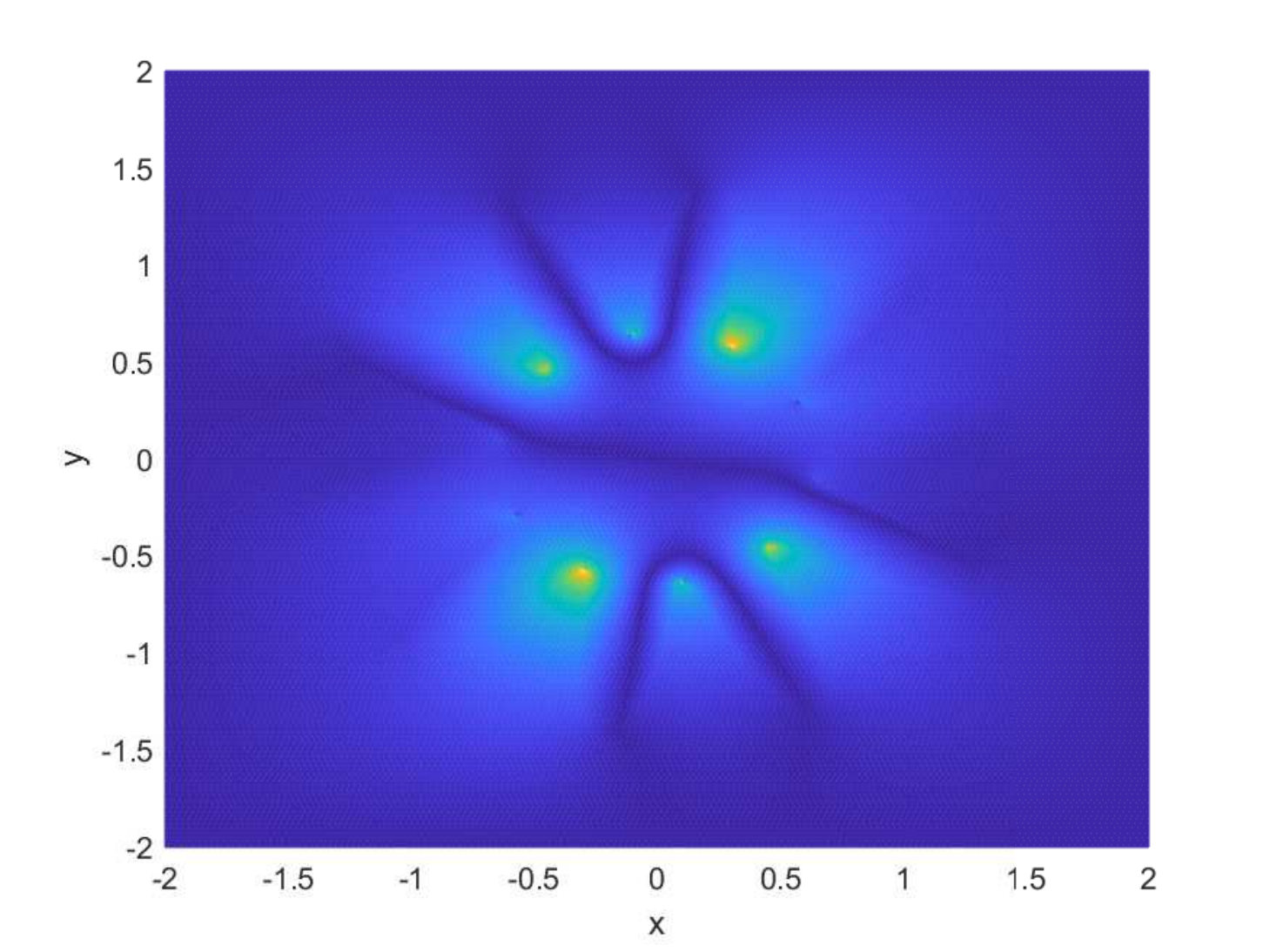}
\end{subfigure}
	\caption
	{Performance of the proposed hybrid FDM in case of  \cref{hybrid:ex4}:   the numerical solution $(u_h)_{i,j}$ (first and second panels) at all grid points $(x_i,y_j)$ on $\overline{\Omega}=[-2,2]^2$ with $h=2^{-8}$, and $|(u_h)_{i,j}-(u_{h/2})_{2i,2j}|$ (third and fourth panels) at all grid points $(x_i,y_j)$ on $\overline{\Omega}=[-2,2]^2$ with $h=2^{-7}$. Note that the second and fourth panels are just the 2D color map formats corresponding to the first and third panels, respectively.
}
	\label{hybrid:fig:ex4}
\end{figure}	
%
%
%
%
%
%
\begin{remark}
 We extended the method proposed in this paper to the Helmholtz interface problem $\Delta u+{\ka}^2u=f$ in $\Omega \setminus \Gamma$ with $\left[u\right]=g$ and  $\left[\nabla  u \cdot \nv \right]=g_{\Gamma}$ on $\Gamma$  in \cite{FHM21Helmholtz}. More precisely, \cite{FHM21Helmholtz} derived a fifth-order 9-point compact FDM for piecewise constant wavenumbers, and a sixth-order 9-point compact FDM with reduced
 pollution effect for constant wavenumbers. We also provided the results of four numerical experiments  that confirm the order of convergence: Examples 3.5 and 3.6 consider constant wavenumbers and Examples 3.7 and 3.8 consider discontinuous piecewise constant wavenumbers.
\end{remark}
		\begin{remark}		
			We consider complex  interface curves  $\Gamma$  in  \cref{hybrid:ex2,hybrid:ex4} (an eight-star interface and a ten-star interface). From \cref{hybrid:8and10Stars}, we observe that when the $J$ is small (i.e., the mesh size $h$ is coarse), the interface curves  $\Gamma$ have large curvatures within some 13-point stencils. So the errors observed in \cref{hybrid:ex2,hybrid:ex4} are large when $J$ is small (see  \cref{hybrid:table:ex1}). Motivated by the pollution minimization strategy in \cite{FHM21Helmholtz},  we plan to propose a new technique to tackle this issue in our future work.
\end{remark}
\begin{figure}[htbp]
	\centering
	\begin{subfigure}[b]{0.4\textwidth}
		\includegraphics[width=6cm,height=6cm]{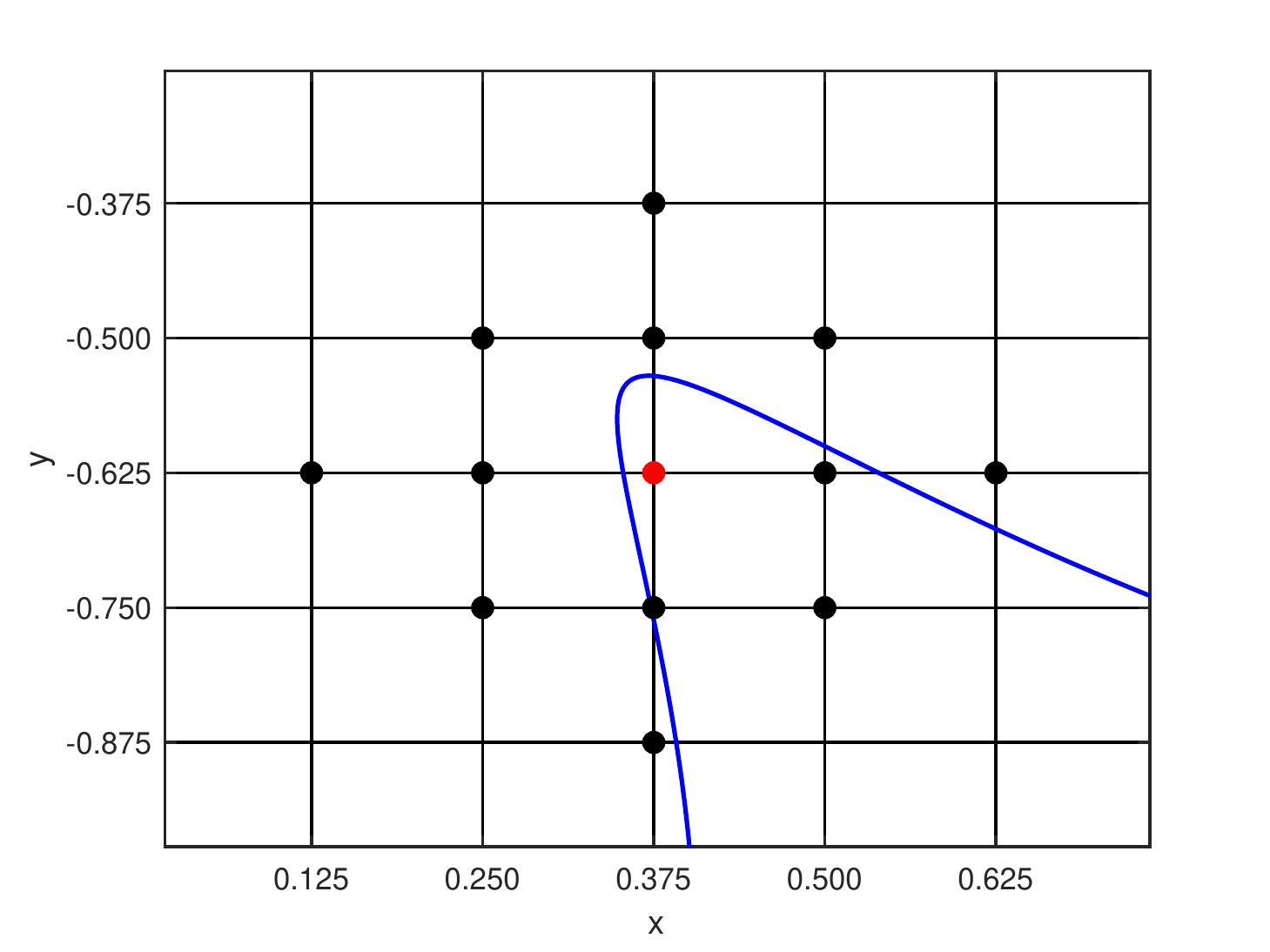}
	\end{subfigure}
	\begin{subfigure}[b]{0.4\textwidth}
		\includegraphics[width=6cm,height=6cm]{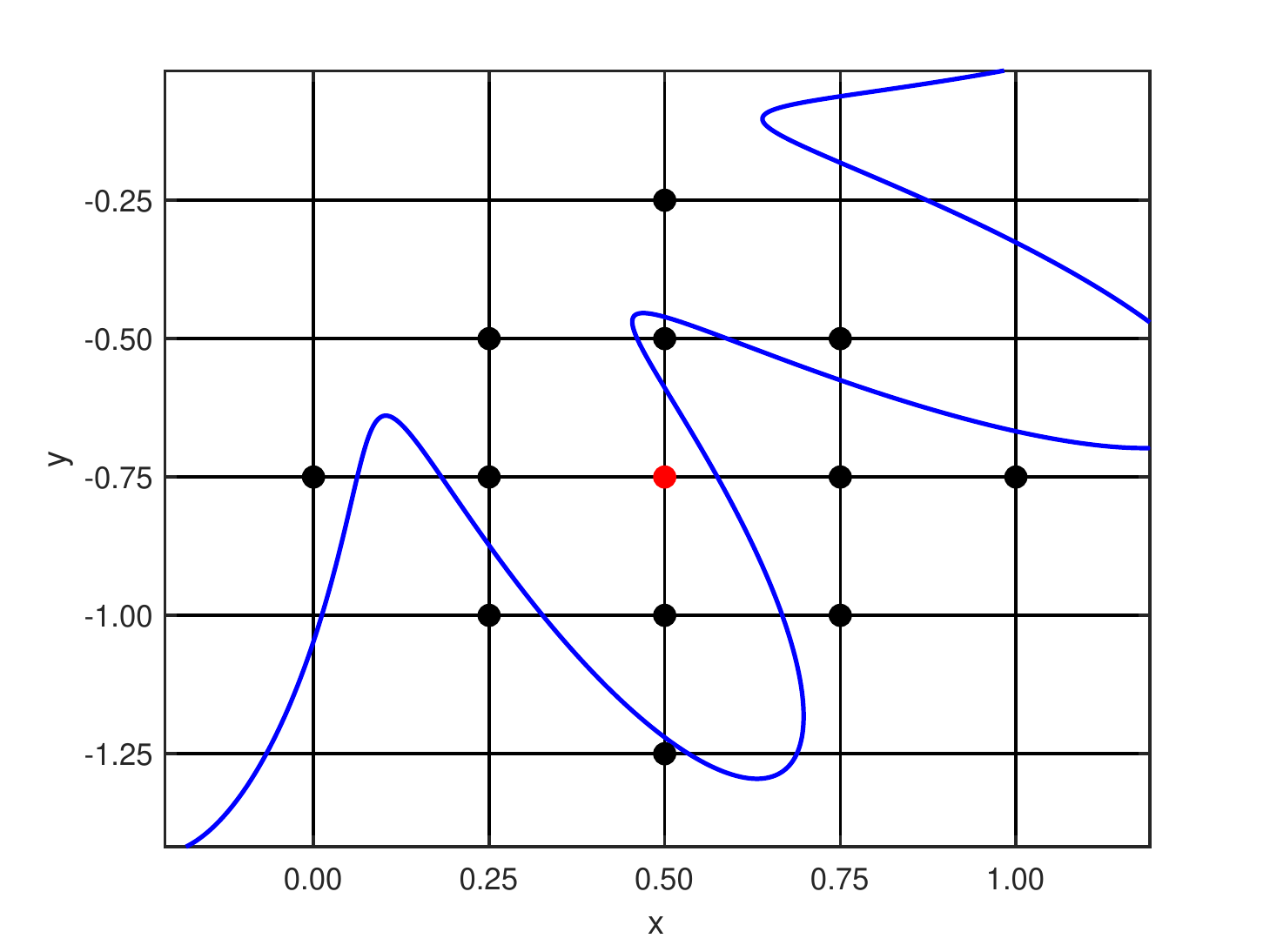}
	\end{subfigure}
		\hspace{-1cm}
	\caption
	{The 13-point scheme at the irregular point $(x_i,y_j)=(0.375,-0.625)$ for \cref{hybrid:ex2} with $J=5$ (left) and the 13-point scheme at the irregular point $(x_i,y_j)=(0.5,-0.75)$ for \cref{hybrid:ex4} with $J=4$ (right).
	}
	\label{hybrid:8and10Stars}
\end{figure}	
\section{Conclusion}\label{sec:hybrid:Conclu}
\noindent Regarding the proposed sixth-order compact FDM at regular points:
\begin{itemize}	
	
	\item	To our best knowledge,  thus far in the literature there were no schemes for solving  the elliptic equation $-\nabla \cdot( a\nabla u)=f$ with a scalar variable coefficient $a>0$, that could achieve fourth and higher order consistency and the M-matrix property for any $h>0$. We prove the existence of  a 9-point compact FDM with the sixth-order consistency, satisfying the M-matrix property for any $h>0$ without any mesh constraints and we explicitly construct such a scheme.
	
	\item
	We also derive the $6$-point/$4$-point compact FDMs with the sixth-order consistency and satisfying the M-matrix property for any $h>0$ at the side/corner boundary points for any mixed boundary conditions  under the proposed necessary condition of the boundary functions $\alpha, \beta$.
	
\item  The matrices ($A_d$ with $d=0,\dots,6$ or 7, see \eqref{A0:Regular}--\eqref{A1:A7:regular}, \eqref{A0:Gamma1}-\eqref{Ad:Gamma1} and \eqref{A0:Corner1}--\eqref{Ad:Corner1}) of linear systems to derive the above FDMs are constant matrices.   High order (partial) derivatives of $a,\alpha, \beta$ only appear on the right hand side of  linear systems.
\end{itemize}
Regarding the proposed fifth-order $13$-point FDM at irregular points:
\begin{itemize}	
	
\item	We propose a 13-point FDM  with the fifth-order consistency,  and our numerical experiments confirm the sixth-order  convergence in the $l_{\infty}$ norm of  the combined hybrid scheme.

\item To obtain the stencil coefficients of the 13-point FDM with the fifth-order consistency,
we explicitly derive a recursive solver to decompose the original linear system  into six smaller ones.  This significantly reduces the computational cost and make the implementation efficient.

\item The matrices ($A_d$ with $d=0,\dots,5$, see \eqref{Matrix:Form:irregular}--\eqref{A0:example}) of  linear systems to obtain the stencil coefficients of the above FDM only depend on  $a_{\pm}^{(0,0)}$, the first derivative of $\Gamma$ and $(v_0,w_0)$ in \eqref{base:pt:gamma}.   High order (partial) derivatives of $a_{\pm}$ and $\Gamma$ only appear on the right hand side of  linear systems.

\end{itemize}
Finally,
our proposed FDMs are independent of the choice of local parametric equations (or the level set functions) of $\Gamma$ near the base point and can handle the jump functions $\gd,\gn$ along $\Gamma$ which only depend on the geometry (such as curvature) of $\Gamma$.
Moreover,
we only use function values to numerically approximate (without losing accuracy) high order (partial) derivatives of the coefficient $a$, the source term $f$, the interface curve $\Gamma$, the two jump functions on $\Gamma$, and the functions on $\partial \Omega$.

\appendix
\section{Proofs of \cref{thm:regular:interior,thm:tranmiss:interface,thm:13point:scheme} and FDMs at boundary grid points}
\label{hybrid:sec:proofs}

\subsection{Proofs of \cref{thm:regular:interior,thm:tranmiss:interface,thm:13point:scheme}}\label{proof:Appendix}
We now prove \cref{thm:regular:interior,thm:tranmiss:interface,thm:13point:scheme}  stated in \cref{sec:sixord}.

\begin{proof}[\textbf{Proof of \cref{thm:regular:interior}}]
Applying the definition of $\mathcal{L}_h$ in
\eqref{stencil:regular} to the exact solution $u$, we have
	\[
	\begin{split}
		& h^{-2}	\mathcal{L}_h u= h^{-2} \sum_{k=-1}^{1} \sum_{\ell=-1}^{1} C_{k,\ell} u(x_i + kh, y_j + \ell h),
	\end{split}
	\]
	where the stencil coefficients $C_{k,\ell}$ are defined in \eqref{Ckell}, i.e., $C_{k,\ell}:=\sum_{p=0}^{M+1}c_{k,\ell,p} h^p$.
Using the established identity \eqref{u:approx} with the particular base point $(x_i^*, y^*_j):=(x_i,y_j)$, we have
\be\label{Lh:u:regular}
\begin{split}
	& h^{-2}	\mathcal{L}_h u   = 	 h^{-2} \sum_{(m,n)\in \ind_{M+1}^{1}} u^{(m,n)} I_{m,n} + \sum_{(m,n) \in \ind_{	 M-1}} f^{(m,n)} J_{m,n}   +\bo(h^{M}),
\end{split}
\ee
where
\be\label{Imn:regular}
\begin{split}
	I_{m,n}=\sum_{k=-1}^{1} \sum_{\ell=-1}^{1} C_{k,\ell}G_{M+1,m,n}  (kh, \ell h),\qquad J_{m,n}= h^{-2}\sum_{k=-1}^{1} \sum_{\ell=-1}^{1}	 C_{k,\ell} H_{M+1,m,n}  (kh, \ell h).
\end{split}
\ee	
%
%
%
%
Then \eqref{Lh:u:regular} and \eqref{stencil:regular}
yield
$h^{-2}	\mathcal{L}_h (u- u_h)=\bo(h^{M})$,
if $I_{m,n}$ in \eqref{Imn:regular} satisfies
%
%
	%
	%
	\be \label{regular:EQ:explicit}
	\sum_{k=-1}^1 \sum_{\ell=-1}^1 C_{k,\ell} G_{M+1,m,n}(kh, \ell h)=\bo(h^{M+2}) \quad \mbox{for all}  \quad (m,n)\in \ind_{M+1}^1.
	\ee
By the symbolic calculation,
\eqref{regular:EQ:explicit} has a nontrivial solution $\{C_{k,\ell}\}_{k,\ell=-1,0,1}$ if and only if $M\le 6$.
We now derive \eqref{Thm:regular:Equation:Form}  and prove items (i) and (ii) in \cref{thm:regular:interior}.
%
	First, \eqref{regular:EQ:explicit} with $m=n=0$, \eqref{GM100}, and \eqref{Ckell} lead to
	\be\label{regular:sum:0}
	\sum\limits_{k=-1}^{1} \sum\limits_{\ell=-1}^{1}
	c_{k,\ell,p}=0, \quad \mbox{for} \quad p=0,\dots, M+1.
	\ee
	So we proved the item (i) in \cref{thm:regular:interior}.
	By \eqref{Ckell} and \eqref{Gmn}, \eqref{regular:EQ:explicit} becomes
\be\label{regular:putGmn}
\begin{split}
	&	\sum_{k=-1}^1 \sum_{\ell=-1}^1 \sum_{p=0}^{M+1} c_{k,\ell,p}h^p \Big(G_{m,n}(kh,\ell h)+\sum_{(p,q)\in \ind_{M+1}^2 \setminus \ind_{m+n}^2 } \frac{a^{u}_{p,q,m,n}}{p!q!} (kh)^{p} (\ell h)^{q}  \Big)\\
	& =\bo(h^{M+2}), \qquad \text{for all }  (m,n)\in \ind_{M+1}^1,
\end{split}
\ee
i.e.,
		\be\label{regular:separate}
		\begin{split}
			&	\sum_{k=-1}^1 \sum_{\ell=-1}^1 \sum_{p=0}^{M+1} c_{k,\ell,p}h^p G_{m,n}(kh,\ell h)\\
			&+	\sum_{k=-1}^1 \sum_{\ell=-1}^1 \sum_{s=0}^{M+1} c_{k,\ell,s}h^s  \sum_{(p,q)\in \ind_{M+1}^2 \setminus \ind_{m+n}^2 } \frac{a^{u}_{p,q,m,n}}{p!q!} k^{p} \ell^{q} h^{p+q} =\bo(h^{M+2}), \quad  \text{for all }  (m,n)\in \ind_{M+1}^1.
		\end{split}
		\ee
Since  $(p,q)\in \ind_{M+1}^2 \setminus \ind_{m+n}^2$ implies $p+q>m+n$, we have that the degree of $h$ of $ k^{p} \ell^{q} h^{p+q}$ must be greater than $m+n$, if $ k^{p} \ell^{q} h^{p+q} \ne0$. By \eqref{Hmn},
the  degree of $h$ of $G_{m,n}(kh,\ell h)$ is $m+n$, if $G_{m,n}(kh,\ell h) \ne0$.
Consider non-zero terms $h^{m+n+d}$ in \eqref{regular:separate} with $0\le d \le M+1-m-n$, we deduce
\be\label{regular:separate:hmnd}
\begin{split}
	& \sum_{k=-1}^1 \sum_{\ell=-1}^1  c_{k,\ell,d}h^d G_{m,n}(kh,\ell h)+	 \sum_{k=-1}^1 \sum_{\ell=-1}^1 \sum_{s=0}^{M+1} c_{k,\ell,s}h^s \sum_{ \substack{(p,q)\in \ind_{M+1}^2 \setminus \ind_{m+n}^2 \\p+q=m+n+d-s}} \frac{a^{u}_{p,q,m,n}}{p!q!}  k^{p} \ell^{q} h^{p+q}\\
	&=\bo(h^{M+2}),\quad \mbox{ for all } 0\le d \le M+1-m-n \mbox{ and }  (m,n)\in \ind_{M+1}^1.
\end{split}
\ee
By \eqref{Sk}--\eqref{Hmn}, we can say that \eqref{regular:separate:hmnd} is equivalent to 	 
\be\label{regular:separate:hmnd:noh:1}
\begin{split}
	& \sum_{k=-1}^1 \sum_{\ell=-1}^1  c_{k,\ell,d}G_{m,n}(k,\ell )+	 \sum_{k=-1}^1 \sum_{\ell=-1}^1 \sum_{s=0}^{M+1} c_{k,\ell,s} \sum_{ \substack{(p,q)\in \ind_{M+1}^2 \setminus \ind_{m+n}^2 \\p+q=m+n+d-s}} \frac{a^{u}_{p,q,m,n}}{p!q!}  k^{p} \ell ^{q} =0,\\
	& \mbox{ for all } 0\le d \le M+1-m-n \mbox{ and }  (m,n)\in \ind_{M+1}^1.
\end{split}
\ee
Note that $(p,q)\in \ind_{M+1}^2 \setminus \ind_{m+n}^2$ results in $m+n+1\le p+q\le M+1$.
By $p+q=m+n+d-s$,  $1\le d-s\le M+1-m-n$, so $s \le d-1$. By the definition of $\ind_{M+1}^1$ in \eqref{indV12},  $0\le d \le M+1-m-n$ and  $(m,n)\in \ind_{M+1}^1$ can be equivalently rewritten as $(m,n)\in \ind_{M+1-d}^1$  and $0 \le d \le M+1$.
So \eqref{regular:separate:hmnd:noh:1} is equivalent to
\be\label{regular:separate:hmnd:noh:2}
\begin{split}
	& \sum_{k=-1}^1 \sum_{\ell=-1}^1  c_{k,\ell,d} G_{m,n}(k,\ell )+\sum_{k=-1}^1 \sum_{\ell=-1}^1 \sum_{s=0}^{d-1} c_{k,\ell,s} 	 A^u_{k,\ell,m,n,s}=0,\\
	&\mbox{with} \quad
	A^u_{k,\ell,m,n,s}:=\sum_{ \substack{(p,q)\in \ind_{M+1}^2 \setminus \ind_{m+n}^2 \\p+q=m+n+d-s}} \frac{a^{u}_{p,q,m,n}} {p!q!} k^{p} \ell ^{q}\qquad\quad
	\mbox{ for all } (m,n)\in \ind_{M+1-d}^1,
\end{split}
\ee		
%
%
for all $d=0,\ldots, M+1$.		
By \eqref{regular:putGmn}--\eqref{regular:separate:hmnd:noh:2},	 we can say that \eqref{regular:EQ:explicit} is equivalent to  \eqref{regular:separate:hmnd:noh:2}.
Note that $\sum_{s=0}^{d-1}$ in \eqref{regular:separate:hmnd:noh:2} is empty for $d=0$.
Now \eqref{Thm:regular:Equation:Form} in \cref{thm:regular:interior} can be seen from
\eqref{regular:separate:hmnd:noh:2} with $M=6$.

	The system of linear equations in \eqref{regular:separate:hmnd:noh:2} can be represented in the following matrix form:
	\be\label{Matrix:Form:2}
	\begin{pmatrix}
		A_0 & \textbf{0} & \textbf{0} & \cdots & \textbf{0} & \textbf{0}  \\
		B_{1,0} & A_1 & \textbf{0} & \cdots & \textbf{0}  & \textbf{0}\\
		B_{2,0} & B_{2,1} & A_2 & \cdots & \textbf{0}  & \textbf{0}\\
		\vdots & \vdots  & \vdots  & \ddots & \vdots  & \vdots \\
		B_{M,0} &  B_{M,1}  & B_{M,2}  & \cdots & A_{M} & \textbf{0}\\
		\textbf{0} & \textbf{0}  & \textbf{0}  & \cdots &  \textbf{0} & A_{M+1}\\
	\end{pmatrix}
	\begin{pmatrix}
		C_0  \\
		C_1  \\
		C_2  \\
		\vdots  \\
		C_M  \\
		C_{M+1}
	\end{pmatrix}=\textbf{0}, \text{ with }
	C_d=
	{\tiny{	
			\begin{pmatrix}
				c_{-1,-1,d} \\
				c_{-1,0,d} \\
				c_{-1,1,d}  \\
				c_{0,-1,d} \\
				c_{0,0,d} \\
				c_{0,1,d}  \\
				c_{1,-1,d} \\
				c_{1,0,d} \\
				c_{1,1,d}
			\end{pmatrix},}}
	\ee
	where $0\le d \le M+1$ and
	\[
	\begin{split}
	& A_d:=[G_{m,n}(k,\ell)]_{(k,\ell)\in \{-1,0,1\}^2, (m,n)\in \ind_{M+1-d}^1}, \text{ with } 0\le d \le M+1,\\
	& B_{d,s}:=[A^u_{k,\ell,m,n,s}]_{(k,\ell)\in \{-1,0,1\}^2, (m,n)\in \ind_{M+1-d}^1}, \text{ with } 1\le d \le M  \text{ and }  0\le s \le d-1.
\end{split}
	\]
The $G_{m,n}(x,y)$ in \eqref{Hmn} and $a^u_{p,q,m,n}$ in \eqref{upq1} imply that that every $A_d$ is a constant matrix, and every matrix $B_{d,s}$ only depends on $\{a^{(i,j)}: (i,j) \in \ind_{M}\}$.    See \eqref{A0:Regular} and \eqref{A1:A7:regular}  for $\{A_d: 0\le d\le 7 \}$ with $M=6$.  Clearly, \eqref{regular:EQ:explicit} is equivalent to
	\eqref{Matrix:Form:2}. For the sake of brevity, let $\textsf{A}\textsf{C}=\textbf{0}$ represent the whole homogeneous linear system in  \eqref{Matrix:Form:2} with $M=6$. Then
 	$\text{rank}(\textsf{A})=48$ and the dimension of the solution $\textsf{C}$ is 24 by the symbolic calculation. Clearly, \eqref{Matrix:Form:2} implies
	\be\label{Matrix:Form:1}
A_0 C_0=\textbf{0},\quad A_d C_d=b_d 	\text{ with }
b_d:=-\sum_{s=0}^{d-1}B_{d,s}C_{s} \text{ for } 1\le d\le M,
\quad \mbox{and}\quad A_{M+1} C_{M+1}=0.
	\ee
 So	
	$b_d$ depends on $\{ C_i : 0\le i \le d-1 \}$ and $\{a^{(i,j)}: (i,j) \in \ind_{M}\}$ for $1\le d\le M$.   By a direct calculation, we observe that  the dimension of the solution of
	\eqref{Matrix:Form:1} is 24 for $M=6$ (see the following   \eqref{ckl0:degree:0}, \eqref{ckl1:degree:1}, \eqref{ckl2:degree:2}, \eqref{ckl4:degree:4}, \eqref{ckl5:degree:5}, \eqref{ckl6:degree:6} and \eqref{ckl7:degree:7}).
Now	we can say that \eqref{Matrix:Form:2}  is equivalent to \eqref{Matrix:Form:1} for $M=6$, i.e., \eqref{regular:EQ:explicit}  is equivalent to \eqref{Matrix:Form:1} for $M=6$.

We now prove item (ii) in \cref{thm:regular:interior} by establishing the following \eqref{Matrix:Form:separate}--\eqref{ckl7:degree:7:interval}.
By $h> 0$,  the nontrivial $\{C_{k,\ell}\}_{k,\ell=-1,0,1}$ in \eqref{stencil:regular} satisfies the sign condition \eqref{sign:condition} for any $h$ if it satisfies \eqref{suff:nece:sign:cond}.
By \eqref{Matrix:Form:1} with $M=6$, we have
\be\label{Matrix:Form:separate}
A_0C_0=\textbf{0},\qquad A_dC_d=b_d,\quad d=1,\dots, 6,\qquad A_7C_7=0.
\ee
All solutions of $A_0C_0=\textbf{0}$ in \eqref{Matrix:Form:separate} with $M=6$ can be represented as
{\small{
\be\label{ckl0:degree:0}
\begin{split}
& c_{0,0,0}=-20c_{1,1,0},\quad
c_{-1,0,0}=c_{1,0,0}=c_{0,-1,0}=c_{0,1,0}=4c_{1,1,0},\quad c_{-1,-1,0}=c_{-1,1,0}=c_{1,-1,0}=c_{1,1,0},
\end{split}
\ee}}
where $c_{1,1,0}$ is free parameter.
Then $\{c_{k,\ell,0}\}_{k,\ell=-1,0,1}$ in \eqref{ckl0:degree:0} satisfies the condition in \eqref{suff:nece:sign:cond}  if and only if $c_{1,1,0}<0$.
All solutions of $A_1C_1=b_1$ in \eqref{Matrix:Form:separate} with $M=6$ can be represented as
{\small{
\be\label{ckl1:degree:1}
\begin{split}
& c_{0,0,1}=-20c_{1,1,1}+r_{1,1},\quad c_{-1,0,1}=4c_{1,1,1}+r_{1,2}, \quad  c_{1,0,1}=4c_{1,1,1}+r_{1,3}, \quad c_{0,-1,1}=4c_{1,1,1}+r_{1,4},\\
&   c_{0,1,1}=4c_{1,1,1}+r_{1,5},\quad  c_{-1,-1,1}=c_{1,1,1}+r_{1,6}, \quad c_{-1,1,1}=c_{1,1,1}+r_{1,7}, \quad c_{1,-1,1}=c_{1,1,1}+r_{1,8},
\end{split}
\ee}}
where $c_{1,1,1}$ is the free parameter, $\{r_{1,p}\}_{p=1,\dots,8}$ only depends on  $\{c_{k,\ell,0}\}_{k,\ell=-1,0,1}$ in \eqref{ckl0:degree:0} and $\{a^{(i,j)}: (i,j) \in \ind_{6}\}$ (a special \eqref{ckl1:degree:1} with explicit expressions is shown in \eqref{particular:0}-\eqref{particular:6}).
Then $\{c_{k,\ell,1}\}_{k,\ell=-1,0,1}$ in \eqref{ckl1:degree:1} satisfies the condition in \eqref{suff:nece:sign:cond}  if and only if
\be\label{ckl1:degree:1:interval}
 c_{1,1,1}\le \min \{\tfrac{r_{1,1}}{20}, \tfrac{-r_{1,2}}{4},\tfrac{-r_{1,3}}{4},\tfrac{-r_{1,4}}{4},\tfrac{-r_{1,5}}{4}, -r_{1,6}, -r_{1,7}, -r_{1,8},0 \}.
\ee
All solutions of $A_dC_d=b_d$ with $d=2,3$ in \eqref{Matrix:Form:separate} with $M=6$ can be represented as
{\small{
\be\label{ckl2:degree:2}
\begin{split}
	& c_{0,0,d}=-20c_{1,1,d}+r_{d,1},\quad c_{-1,0,d}=4c_{1,1,d}+r_{d,2}, \quad  c_{1,0,d}=4c_{1,1,d}+r_{d,3}, \quad c_{0,-1,d}=4c_{1,1,d}+r_{d,4},\\
	&   c_{0,1,d}=4c_{1,1,d}+r_{d,5},\  c_{-1,-1,d}=c_{1,1,d}+r_{d,6}, \  c_{-1,1,d}=c_{1,1,d}+r_{d,7}, \  c_{1,-1,d}=c_{1,1,d}+r_{d,8}, \  d=2,3,
\end{split}
\ee}}
where $c_{1,1,d}$  with $d=2,3$ is the free parameter, $\{r_{d,p}\}_{p=1,\dots,8}$  only depends on  $\{c_{k,\ell,s}\}_{k,\ell=-1,0,1}$ with $0\le s\le d-1$ in \eqref{ckl0:degree:0}, \eqref{ckl1:degree:1}, \eqref{ckl2:degree:2}  and $\{a^{(i,j)}: (i,j) \in \ind_{6}\}$ for $d=2,3$ (a special \eqref{ckl2:degree:2} with explicit expressions is shown in \eqref{particular:0}-\eqref{particular:6}).
Then $\{c_{k,\ell,d}\}_{k,\ell=-1,0,1}$ in \eqref{ckl2:degree:2} satisfies the condition in \eqref{suff:nece:sign:cond}  if and only if
\be\label{ckl2:degree:2:interval}
\begin{split}
 c_{1,1,d}\le \min \{\tfrac{r_{d,1}}{20}, \tfrac{-r_{d,2}}{4},\tfrac{-r_{d,3}}{4},\tfrac{-r_{d,4}}{4},\tfrac{-r_{d,5}}{4}, -r_{d,6}, -r_{d,7}, -r_{d,8},0  \},\quad d=2,3.
\end{split}
\ee
All solutions of $A_4C_4=b_4$ in \eqref{Matrix:Form:separate} with $M=6$ can be represented as
\be\label{ckl4:degree:4}
\begin{split}
	& c_{0,0,4}=-4c_{1,0,4}-4c_{1,1,4}+r_{4,1},\quad c_{-1,0,4}=c_{1,0,4}+r_{4,2}, \quad c_{0,-1,4}=c_{1,0,4}+r_{4,3},\\
	&   c_{0,1,4}=c_{1,0,4}+r_{4,4},\quad c_{-1,-1,4}=c_{1,1,4}+r_{4,5}, \quad c_{-1,1,4}=c_{1,1,4}+r_{4,6}, \quad c_{1,-1,4}=c_{1,1,4}+r_{4,7},
\end{split}
\ee
where $\{ c_{1,0,4}, c_{1,1,4} \}$ are the free parameters, $\{r_{4,p}\}_{p=1,\dots,7}$ only depends on  $\{c_{k,\ell,s}\}_{k,\ell=-1,0,1}$ with $0\le s\le 3$ in \eqref{ckl0:degree:0}, \eqref{ckl1:degree:1}, \eqref{ckl2:degree:2}  and $\{a^{(i,j)}: (i,j) \in \ind_{6}\}$ (a special \eqref{ckl4:degree:4} with explicit expressions is shown in \eqref{particular:0}-\eqref{particular:6}).
Then $\{c_{k,\ell,4}\}_{k,\ell=-1,0,1}$ in \eqref{ckl4:degree:4} satisfies the condition  \eqref{suff:nece:sign:cond}  if and only if
{\small{
\be\label{ckl4:degree:4:interval}
\begin{split}
	& c_{1,0,4}+c_{1,1,4}\le \tfrac{r_{4,1}}{4},\quad c_{1,0,4}\le \min \{ -r_{4,2}, -r_{4,3}, -r_{4,4},0 \},\quad c_{1,1,4}\le \min \{ -r_{4,5}, -r_{4,6}, -r_{4,7},0 \}.
\end{split}
\ee}}
One non-empty interval of \eqref{ckl4:degree:4:interval} is
\be\label{ckl4:degree:4:interval:special}
\begin{split}
	 c_{1,0,4}=c_{1,1,4},  \qquad c_{1,1,4}\le \min \{ \tfrac{r_{4,1}}{8}, -r_{4,2}, -r_{4,3}, -r_{4,4},-r_{4,5}, -r_{4,6}, -r_{4,7},0 \}.
\end{split}
\ee
All solutions of $A_5C_5=b_5$ in \eqref{Matrix:Form:separate} with $M=6$ can be represented as
\be\label{ckl5:degree:5}
\begin{split}
	& c_{0,0,5}=-2c_{0,1,5}-2c_{1,0,5}-4c_{1,1,5}+r_{5,1},\\
	& c_{-1,0,5}=2c_{0,1,5}-2c_{1,-1,5}-c_{1,0,5}+2c_{1,1,5}+r_{5,2}, \quad c_{0,-1,5}=c_{0,1,5}-2c_{1,-1,5}+2c_{1,1,5}+r_{5,3}, \\
	& c_{-1,-1,5}=-c_{0,1,5}+2c_{1,-1,5}+c_{1,0,5}-c_{1,1,5}+r_{5,4}, \quad c_{-1,1,5}=-c_{0,1,5}+c_{1,-1,5}+c_{1,0,5}+r_{5,5},
\end{split}
\ee
where $\{ c_{0,1,5}, c_{1,-1,5}, c_{1,0,5}, c_{1,1,5} \}$ are the free parameters, $\{r_{5,p}\}_{p=1,\dots,5}$ only depends on  $\{c_{k,\ell,s}\}_{k,\ell=-1,0,1}$ with $0\le s\le 4$ in \eqref{ckl0:degree:0}, \eqref{ckl1:degree:1}, \eqref{ckl2:degree:2}, \eqref{ckl4:degree:4}  and $\{a^{(i,j)}: (i,j) \in \ind_{6}\}$ (a special \eqref{ckl5:degree:5} with explicit expressions is shown in \eqref{particular:0}-\eqref{particular:6}).
Then $\{c_{k,\ell,5}\}_{k,\ell=-1,0,1}$ in \eqref{ckl5:degree:5} satisfies the condition in \eqref{suff:nece:sign:cond}  if and only if
\be\label{ckl5:degree:5:interval}
\begin{split}
	& -2c_{0,1,5}-2c_{1,0,5}-4c_{1,1,5}+r_{5,1}\ge 0, \quad 2c_{0,1,5}-2c_{1,-1,5}-c_{1,0,5}+2c_{1,1,5}+r_{5,2}\le 0,\\
	& c_{0,1,5}-2c_{1,-1,5}+2c_{1,1,5}+r_{5,3} \le 0, \quad-c_{0,1,5}+2c_{1,-1,5}+c_{1,0,5}-c_{1,1,5}+r_{5,4} \le 0,\\
	& -c_{0,1,5}+c_{1,-1,5}+c_{1,0,5}+r_{5,5} \le 0, \quad  c_{0,1,5}\le 0, \quad  c_{1,-1,5}\le 0, \quad c_{1,0,5}\le 0, \quad c_{1,1,5}\le 0.
\end{split}
\ee
One non-empty interval of \eqref{ckl5:degree:5:interval} is
\be\label{ckl5:degree:5:interval:special}
\begin{split}
	& c_{0,1,5}=c_{1,-1,5}=c_{1,0,5}=c_{1,1,5}, \qquad c_{1,1,5}\le \min \{\tfrac{r_{5,1}}{8}, -r_{5,2}, -r_{5,3}, -r_{5,4},-r_{5,5}, 0 \}.
\end{split}
\ee
All solutions of $A_6C_6=b_6$ in \eqref{Matrix:Form:separate} with $M=6$ can be represented as
\be\label{ckl6:degree:6}
\begin{split}
	& c_{-1,0,6}=-2c_{-1,1,6}-c_{0,0,6}-2c_{0,1,6}-c_{1,0,6}-2c_{1,1,6}+r_{6,1},\\
	& c_{0,-1,6}=-c_{0,0,6}-c_{0,1,6}-2c_{1,-1,6}-2c_{1,0,6}-2c_{1,1,6}+r_{6,2},\\
	& c_{-1,-1,6}=c_{-1,1,6}+c_{0,0,6}+2c_{0,1,6}+c_{1,-1,6}+2c_{1,0,6}+3c_{1,1,6}+r_{6,3},
\end{split}
\ee
where $\{  c_{-1,1,6}, c_{0,0,6}, c_{0,1,6}, c_{1,-1,6}, c_{1,0,6}, c_{1,1,6} \}$ are the free parameters, $\{r_{6,p}\}_{p=1,2,3}$ only depends on  $\{c_{k,\ell,s}\}_{k,\ell=-1,0,1}$ with $0\le s\le 5$ in \eqref{ckl0:degree:0}, \eqref{ckl1:degree:1}, \eqref{ckl2:degree:2}, \eqref{ckl4:degree:4}, \eqref{ckl5:degree:5} and $\{a^{(i,j)}: (i,j) \in \ind_{6}\}$ (a special \eqref{ckl6:degree:6} with explicit expressions is shown in \eqref{particular:0}-\eqref{particular:6}).
Then $\{c_{k,\ell,6}\}_{k,\ell=-1,0,1}$ in \eqref{ckl6:degree:6} satisfies the condition in \eqref{suff:nece:sign:cond}  if and only if
\be\label{ckl6:degree:6:interval}
\begin{split}
& -2c_{-1,1,6}-c_{0,0,6}-2c_{0,1,6}-c_{1,0,6}-2c_{1,1,6}+r_{6,1}\le 0,\\
& -c_{0,0,6}-c_{0,1,6}-2c_{1,-1,6}-2c_{1,0,6}-2c_{1,1,6}+r_{6,2} \le 0,\\
& c_{-1,1,6}+c_{0,0,6}+2c_{0,1,6}+c_{1,-1,6}+2c_{1,0,6}+3c_{1,1,6}+r_{6,3} \le 0,\\
& c_{-1,1,6}\le 0, \quad c_{0,0,6}\ge 0,\quad c_{0,1,6}\le 0, \quad c_{1,-1,6}	 \le 0, \quad c_{1,0,6}	\le 0, \quad c_{1,1,6}	\le 0.
\end{split}
\ee	
One non-empty interval of \eqref{ckl6:degree:6:interval} is
{\small{
\be\label{ckl6:degree:6:interval:special}
\begin{split}
	 & c_{-1,1,6}=c_{0,1,6}=c_{1,-1,6}=c_{1,0,6}=c_{1,1,6},\quad c_{0,0,6}=-8c_{1,1,6}, \quad c_{1,1,6}\le \min \{ -r_{6,1}, -r_{6,2}, -r_{6,3}, 0\}.
\end{split}
\ee}}
All solutions of $A_7C_7=0$ in \eqref{Matrix:Form:separate} with $M=6$ are
\be\label{ckl7:degree:7}
c_{0,0,7}=-c_{-1,0,7}-c_{1,0,7}-c_{0,-1,7}-c_{0,1,7}-c_{-1,-1,7}-c_{-1,1,7}-c_{1,-1,7}-c_{1,1,7},
\ee
where $\{ c_{-1,0,7}, c_{1,0,7}, c_{0,-1,7}, c_{0,1,7}, c_{-1,-1,7}, c_{-1,1,7}, c_{1,-1,7}, c_{1,1,7} \}$ are free parameters.
Then $\{c_{k,\ell,7}\}_{k,\ell=-1,0,1}$ in \eqref{ckl7:degree:7} satisfies the condition in \eqref{suff:nece:sign:cond}  if and only if
\be\label{ckl7:degree:7:interval}
\begin{split}
&c_{0,0,7}=-c_{-1,0,7}-c_{1,0,7}-c_{0,-1,7}-c_{0,1,7}-c_{-1,-1,7}-c_{-1,1,7}-c_{1,-1,7}-c_{1,1,7} \ge 0,\quad c_{1,1,7}\le0, \\
&  c_{-1,0,7}\le0, \quad c_{1,0,7}\le0, \quad c_{0,-1,7}\le0, \quad c_{0,1,7}\le0, \quad c_{-1,-1,7}\le0, \quad c_{-1,1,7}\le0, \quad c_{1,-1,7}\le0.
\end{split}
\ee	
One non-empty interval of \eqref{ckl7:degree:7:interval} is
$
 c_{k,\ell,7}=0$ for $k,\ell\in \{-1,0,1\}$.
By the symbolic calculation,  all  $r_{d,p}$ $\ne \pm\infty$ in \eqref{ckl1:degree:1:interval}--\eqref{ckl6:degree:6:interval:special}  by $a\ne 0$ in $\Omega$.
By \eqref{Matrix:Form:separate}--\eqref{ckl7:degree:7:interval}, we proved the item (ii) in \cref{thm:regular:interior}.
\end{proof}	
\noindent
\textbf{The explicit expressions for a particular $\{C_{k,\ell}\}$ in \cref{thm:regular:interior}:}
Let $a(x,y)$ be a linear function (i.e., all $a^{(m,n)}$ are 0 for $m+n\ge 2$), $r_1=a^{(1,0)}/a^{(0,0)}$ and $r_2=a^{(0,1)}/a^{(0,0)}$.
Then the explicit expressions of $C_{k,\ell}:=\sum_{p=0}^{7} c_{k,\ell,p}h^p$ in \eqref{stencil:regular} with the sixth-order  consistency and satisfying the sign condition \eqref{sign:condition} and the sum condition \eqref{sum:condition} for any mesh size $h$  are defined in the following \eqref{particular:0}--\eqref{particular:6} (we highlight the free parameters using the color red or blue to increase the visibility):
		\be\label{particular:0}
		\begin{split}
		&
		c_{0,0,0}=20,\   c_{-1,0,0}= c_{1,0,0}= c_{0,-1,0}=c_{0,1,0}=-4,\  c_{-1,-1,0}=c_{-1,1,0}=c_{1,-1,0}=c_{1,1,0}=-1,\\
		& c_{0,0,d}=-20c_{d}+r_{d,1},\quad c_{-1,0,d}=4c_{d}+r_{d,2}, \quad  c_{1,0,d}=4c_{d}+r_{d,3}, \quad c_{0,-1,d}=4c_{d}+r_{d,4}, \\ & c_{0,1,d}=4c_{d}+r_{d,5},\quad
		c_{-1,-1,d}=c_{d}+r_{d,6}, \quad c_{-1,1,d}=c_{d}+r_{d,7}, \quad c_{1,-1,d}=c_{d}+r_{d,8},\\
		& c_{1,1,d}=c_{d}, \quad	 \textcolor{red}{c_{d}}= \min\{r_{d,1}/20, -r_{d,2}/4,-r_{d,3}/4,-r_{d,4}/4,-r_{d,5}/4, -r_{d,6}, -r_{d,7}, -r_{d,8},0  \},
		\end{split}
		\ee
for $d=1,2,3$, where
		\[
		\begin{split}
		&	r_{1, 1} = -10(r_{1}+r_{2}),\quad  r_{1, 2} = 4r_{1}+2r_{2},\quad  r_{1, 3} = 2r_{2},\quad  r_{1, 4} = 2r_{1}+4r_{2},\\
		&  r_{1, 5} = 2r_{1}, \quad r_{1, 6} = r_{1}+r_{2},\quad  r_{1, 7} = r_{1},\quad  r_{1, 8} = r_{2},\\
		&  r_{2, p} = -r_{1, p}\textcolor{blue}{c_{1}}+r_{p+2},   \quad r_{3, p} = -r_{p+2}\textcolor{blue}{c_{1}}-r_{1, p}\textcolor{blue}{c_{2}}+r_{10+p},  \quad \text{for} \quad p=1,\dots,8,
		\end{split}
		\]
 and
{\footnotesize{
		\[
		\begin{split}
			&   r_{3} = 29/5(r_{1}^2+r_{2}^2)+15r_{1}r_{2},\quad r_{4} = -(39r_{1}^2+29r_{2}^2)/20-4r_{1}r_{2},\quad r_{5} = (r_{1}^2-29r_{2}^2)/20-2r_{1}r_{2},\\
			&   r_{6} = -(39r_{2}^2+29r_{1}^2)/20-4r_{1}r_{2},\quad r_{7} = (r_{2}^2-29r_{1}^2)/20-2r_{1}r_{2},\quad r_{8} = -(r_{1}+r_{2})^2/2, \\
			& r_{9} = -r_{1}(2r_{2}+r_{1})/2, \quad r_{10} = -r_{2}(2r_{1}+r_{2})/2,\quad  r_{11}=-127/30 (r_{1}^3+r_{2}^3)-251/15 (r_{1}^2r_{2}+r_{1}r_{2}^2),\\
			& r_{12}=(119r_{2}^3+569r_{1}^2r_{2})/120+(49r_{1}r_{2}^2+19r_{1}^3)/12, \quad r_{13}=(119r_{2}^3+209r_{1}^2r_{2})/120+(29r_{1}r_{2}^2-r_{1}^3)/10, \\
			&  r_{14}=(119r_{1}^3+569r_{1}r_{2}^2)/120+(49r_{1}^2r_{2}+19r_{2}^3)/12, \quad r_{15}=(119r_{1}^3+209r_{1}r_{2}^2)/120+(29r_{1}^2r_{2}-r_{2}^3)/10,\\
			& r_{16}=23/60 (r_{1}^3+r_{2}^3)+83/60(r_{1}^2r_{2}+r_{1}r_{2}^2),\  r_{17}=(23r_{1}^3+53r_{1}r_{2}^2)/60+r_{1}^2r_{2},\  r_{18}=(23r_{2}^3+53r_{1}^2r_{2})/60+r_{1}r_{2}^2.
		\end{split}
		\]}}
		\be
		\begin{split}
			& c_{0,0,4}=-8c_{4}+r_{4,1},\quad c_{-1,0,4}=c_{4}+r_{4,2}, \quad c_{0,-1,4}=c_{4}+r_{4,3}, \quad c_{0,1,4}=c_{4}+r_{4,4},\\
			& c_{-1,-1,4}=c_{4}+r_{4,5}, \quad c_{-1,1,4}=c_{4}+r_{4,6},\quad c_{1,-1,4}=c_{4}+r_{4,7},\quad  c_{1,0,4}=c_{1,1,4}=c_{4},\\
			& \textcolor{red}{c_{4}}=\min \{ r_{4,1}/8, -r_{4,2}, -r_{4,3}, -r_{4,4},-r_{4,5}, -r_{4,6}, -r_{4,7},0 \},
		\end{split}
		\ee
where
{\tiny{
		\[
		\begin{split}
			r_{4, 1} =	& (139r_{1}^3+293r_{1}^2r_{2}+154r_{1}r_{2}^2+8r_{2}^3)/30 \textcolor{blue}{c_{1}}-(6r_{1}^2+7r_{1}r_{2}) \textcolor{blue}{c_{2}}+2(5r_{1}+r_{2})\textcolor{blue}{c_{3}}+(413r_{1}^4-23r_{2}^4)/120+23/2r_{1}^2r_{2}^2\\
			&+(34r_{1}^3r_{2}+13r_{1}r_{2}^3)/3, \\
			r_{4, 2} =& (-(101r_{1}^3+71r_{1}r_{2}^2)/60-3r_{1}^2r_{2})\textcolor{blue}{c_{1}}+2r_{1}(r_{1}+r_{2})\textcolor{blue}{c_{2}}-4r_{1}\textcolor{blue}{c_{3}}-(133r_{1}^4+433r_{1}^3r_{2}+403r_{1}^2r_{2}^2+103r_{1}r_{2}^3)/120,\\
			r_{4, 3} =& (-131r_{1}^3-281r_{1}^2r_{2}-221r_{1}r_{2}^2-71r_{2}^3)/120\textcolor{blue}{c_{1}}+(3r_{1}+r_{2})(r_{1}+r_{2})/2\textcolor{blue}{c_{2}}-2(r_{1}+r_{2})\textcolor{blue}{c_{3}}-1/5r_{2}^4 -(109r_{1}^4\\
			 &+403r_{1}^2r_{2}^2+223r_{1}r_{2}^3+313r_{1}^3r_{2})/120, \\
			r_{4, 4} = & (131(r_{2}^3-r_{1}^3)+139(r_{1}r_{2}^2-r_{1}^2r_{2}))/120\textcolor{blue}{c_{1}}+3/2(r_{1}^2-r_{2}^2)\textcolor{blue}{c_{2}}+2(r_{2}-r_{1})\textcolor{blue}{c_{3}}+109/120(r_{2}^4-r_{1}^4)+7/4(r_{1}r_{2}^3 -r_{1}^3r_{2}), \\
			r_{4, 5} = & -23/60(r_{1}^2+60/23r_{1}r_{2}+r_{2}^2)(r_{1}+r_{2})\textcolor{blue}{c_{1}}+(r_{1}+r_{2})^2/2\textcolor{blue}{c_{2}}-(r_{1}+r_{2})\textcolor{blue}{c_{3}}-31/120(r_{1}^2+90/31r_{1}r_{2}+r_{2}^2)(r_{1}+r_{2})^2, \\
			r_{4, 6} = & (-(53r_{1}r_{2}^2+23r_{1}^3)/60-r_{1}^2r_{2})\textcolor{blue}{c_{1}}+(r_{1}+2r_{2})r_{1}/2\textcolor{blue}{c_{2}}-r_{1}\textcolor{blue}{c_{3}}-31/120r_{1}^4-13/10r_{1}^3r_{2}-83/60r_{1}^2r_{2}^2-4/5r_{1}r_{2}^3, \\
			r_{4, 7} = & (-(53r_{1}^2r_{2}+23r_{2}^3)/60-r_{1}r_{2}^2)\textcolor{blue}{c_{1}}+(r_{2}+2r_{1})r_{2}/2\textcolor{blue}{c_{2}}-r_{2}\textcolor{blue}{c_{3}}-31/120r_{2}^4-13/10r_{1}r_{2}^3-83/60r_{1}^2r_{2}^2-4/5r_{1}^3r_{2}.
		\end{split}
		\]}}
		\be
		\begin{split}
			& c_{0,0,5}=-8c_{5}+r_{5,1},\quad c_{-1,0,5}=c_{5}+r_{5,2}, \quad c_{0,-1,5}=c_{5}+r_{5,3}, \quad c_{-1,-1,5}=c_{5}+r_{5,4},\\
			&    c_{-1,1,5}=c_{5}+r_{5,5}, \quad c_{0,1,5}=c_{1,-1,5}=c_{1,0,5}=c_{1,1,5}=c_{5}, \\
			& \textcolor{red}{c_{5}}= \min \{r_{5,1}/8, -r_{5,2}, -r_{5,3}, -r_{5,4},-r_{5,5}, 0 \},
		\end{split}
		\ee
where
{\tiny{
		\[
		\begin{split}
			r_{5, 1} = & (-207r_{1}^4-743r_{1}^3r_{2}-823r_{1}^2r_{2}^2-473r_{1}r_{2}^3-76r_{2}^4)/120\textcolor{blue}{c_{1}}+(5/2r_{1}^3+11/2r_{1}^2r_{2}+4r_{1}r_{2}^2+r_{2}^3)\textcolor{blue}{c_{2}}-(3r_{1}^2+5r_{1}r_{2}+r_{2}^2)\textcolor{blue}{c_{3}}\\
			& +3(r_{1}+r_{2})\textcolor{blue}{c_{4}}-73/48r_{1}^5-749/120r_{1}^4r_{2}-521/48r_{1}^3r_{2}^2-437/48r_{1}^2r_{2}^3-43/12r_{1}r_{2}^4-49/80r_{2}^5, \\
			r_{5, 2} = & (217r_{1}^3r_{2}-85r_{1}^4+526r_{1}^2r_{2}^2+487r_{1}r_{2}^3+161r_{2}^4)/120\textcolor{blue}{c_{1}}+(r_{1}^3/2-3/2r_{2}^3-7/2r_{1}r_{2}^2-5/2r_{1}^2r_{2})\textcolor{blue}{c_{2}}+(2r_{2}^2-r_{1}^2+4r_{1}r_{2})\textcolor{blue}{c_{3}}\\
			& -3r_{2}\textcolor{blue}{c_{4}}+31/30r_{2}^5+241/60r_{1}r_{2}^4+869/120r_{1}^2r_{2}^3+1133/240r_{1}^3r_{2}^2+23/24r_{1}^4r_{2}-101/240r_{1}^5,\\
			r_{5, 3} = & (76r_{2}^4+397r_{1}r_{2}^3+526r_{1}^2r_{2}^2+307r_{1}^3r_{2})/120\textcolor{blue}{c_{1}}-3r_{2}(r_{1}^2+r_{1}r_{2}+1/3r_{2}^2)\textcolor{blue}{c_{2}}+r_{2}(4r_{1}+r_{2})\textcolor{blue}{c_{3}}-3r_{2}\textcolor{blue}{c_{4}}+49/80r_{2}^5\\
			 &+713/240r_{1}r_{2}^4+397/60r_{1}^2r_{2}^3+1283/240r_{1}^3r_{2}^2+481/240r_{1}^4r_{2},\\
			r_{5, 4} = & (292r_{1}^4+119r_{1}^3r_{2}-319r_{1}^2r_{2}^2-511r_{1}r_{2}^3-161r_{2}^4)/240\textcolor{blue}{c_{1}}+(r_{1}^2r_{2}/4-3/2r_{1}^3+3/2r_{1}r_{2}^2+3/4r_{2}^3)\textcolor{blue}{c_{2}}
			 +(2r_{1}^2-2r_{1}r_{2}\\&-r_{2}^2)\textcolor{blue}{c_{3}}+3/2(r_{2}-r_{1})\textcolor{blue}{c_{4}}+233/240r_{1}^5+671/480r_{1}^4r_{2}-77/480r_{1}^3r_{2}^2-469/160r_{1}^2r_{2}^3-311/160r_{1}r_{2}^4-31/60r_{2}^5,\\
			r_{5, 5} = & (292r_{1}^4+319r_{1}^3r_{2}-139r_{1}^2r_{2}^2-311r_{1}r_{2}^3-161r_{2}^4)/240\textcolor{blue}{c_{1}}+(r_{1}r_{2}^2-3/2r_{1}^3-r_{1}^2r_{2}/4+3/4r_{2}^3)\textcolor{blue}{c_{2}}+(r_{2}+2r_{1})(r_{1}\\
			 &-r_{2})\textcolor{blue}{c_{3}}+3/2(r_{2}-r_{1})\textcolor{blue}{c_{4}}+233/240r_{1}^5+301/160r_{1}^4r_{2}+91/96r_{1}^3r_{2}^2-175/96r_{1}^2r_{2}^3-701/480r_{1}r_{2}^4-31/60r_{2}^5.
		\end{split}
		\]}}
		\be\label{particular:6}
		\begin{split}
			& c_{0,0,6}=-8c_{6}, \quad c_{-1,0,6}=c_{6}+r_{6,1},\quad
			c_{0,-1,6}=c_{6}+r_{6,2},\quad
			c_{-1,-1,6}=c_{6}+r_{6,3}, \\
			& c_{-1,1,6}=c_{0,1,6}=c_{1,-1,6}=c_{1,0,6}=c_{1,1,6}=c_{6},\quad \textcolor{red}{c_{6}}= \min \{ -r_{6,1}, -r_{6,2}, -r_{6,3}, 0\},\  \text{ all } c_{k,\ell,7}=0,
		\end{split}
		\ee
where
{\tiny{
		\[
		\begin{split}
			r_{6, 1} = & (85r_{2}^5+166r_{2}^4r_{1}+642r_{2}^3r_{1}^2+616r_{2}^2r_{1}^3+377r_{2}r_{1}^4)/240\textcolor{blue}{c_{1}}-(r_{2}^4+3r_{1}r_{2}^3+7r_{1}^2r_{2}^2+7r_{1}^3r_{2})/4\textcolor{blue}{c_{2}}+r_{2}/2(5r_{1}^2+r_{1}r_{2}+r_{2}^2)\textcolor{blue}{c_{3}}\\
			 &-3/2r_{1}r_{2}\textcolor{blue}{c_{4}}+3r_{2}\textcolor{blue}{c_{5}}+101/480r_{2}^6+199/240r_{1}r_{2}^5+19/8r_{1}^2r_{2}^4+403/96r_{1}^3r_{2}^3+1519/480r_{1}^4r_{2}^2+189/160r_{1}^5r_{2}, \\
			r_{6, 2} = & (377r_{1}^5+616r_{2}r_{1}^4+642r_{2}^2r_{1}^3+166r_{2}^3r_{1}^2+85r_{2}^4r_{1})/240\textcolor{blue}{c_{1}}-(7r_{1}^4+7r_{1}^3r_{2}+3r_{1}^2r_{2}^2+r_{1}r_{2}^3)/4\textcolor{blue}{c_{2}}+r_{1}/2(5r_{1}^2+r_{1}r_{2}+r_{2}^2)\textcolor{blue}{c_{3}}\\
			& -3/2r_{1}^2\textcolor{blue}{c_{4}}+3r_{1}\textcolor{blue}{c_{5}}+189/160r_{1}^6+1519/480r_{1}^5r_{2}+403/96r_{1}^4r_{2}^2+19/8r_{1}^3r_{2}^3+199/240r_{1}^2r_{2}^4+101/480r_{1}r_{2}^5, \\
			r_{6, 3} = & (-377r_{1}^5-993r_{2}r_{1}^4-1258r_{2}^2r_{1}^3-808r_{2}^3r_{1}^2-251r_{2}^4r_{1}-85r_{2}^5)/240\textcolor{blue}{c_{1}}+(7r_{1}^4+14r_{1}^3r_{2}+10r_{1}^2r_{2}^2+4r_{1}r_{2}^3+r_{2}^4)/4\textcolor{blue}{c_{2}}\\
			 &-(5/2r_{1}^3+3r_{1}^2r_{2}+r_{1}r_{2}^2+r_{2}^3/2)\textcolor{blue}{c_{3}}+3/2r_{1}(r_{1}+r_{2})\textcolor{blue}{c_{4}}-3(r_{1}+r_{2})\textcolor{blue}{c_{5}}-189/160r_{1}^6-1043/240r_{1}^5r_{2}-589/80r_{1}^4r_{2}^2\\
			 &-631/96r_{1}^3r_{2}^3-769/240r_{1}^2r_{2}^4-499/480r_{1}r_{2}^5-101/480r_{2}^6.
		\end{split}
		\]}}
If $a$ is a positive constant, then all the above parameters $r$'s vanish and all the stencil coefficients $C_{k,\ell}$ are constants given in \eqref{particular:0}.

%
%
%
%
%
%
\begin{proof}[\textbf{Proof of \cref{thm:tranmiss:interface}}]
Replacing $x$ and $y$ by  $x-x_i^*$ and $y-y_j^*$ respectively in \eqref{u:approx:ir:key:2}, we have
\[
\begin{split}
	u_{\pm}(x,y)=\sum_{(m,n)\in \ind_{M}^{1}}
	u_\pm^{(m,n)} G^{\pm}_{M,m,n}(x-x_i^*,y-y_j^*) +\sum_{(m,n)\in \ind_{M-2}}
	f_\pm ^{(m,n)} H^{\pm}_{M,m,n}(x-x_i^*,y-y_j^*)+\bo(h^{M+1}),
\end{split}
\]
for $x\in (x_i^*-2h,x_i^*+2h)$ and  $y\in (y_j^*-2h,y_j^*+2h)$. Since $(x(t),y(t))=(r(t),s(t))$ is the parametric equation of $\Gamma$ and $(x_i^*,y_j^*)=(r(t_k^*),s(t_k^*))$, we have
\be\label{transs:flux}
\begin{split}
	&a_{\pm}(r(t), s(t)) \nabla u_\pm (r(t), s(t))\cdot(s'(t),-r'(t))\\
	&=
	\sum_{p=0}^{M-1}
	\Big(
	\sum_{(m,n)\in \ind_{M}^1}
	u_\pm^{(m,n)} \tilde{g}^{\pm}_{m,n,p}
	+\sum_{(m,n)\in \ind_{M-2}}
	f_\pm ^{(m,n)} \tilde{h}^{\pm}_{m,n,p}
	\Big) (t-t_k^*)^p+\bo((t-t_k^*)^{M}),
\end{split}
\ee
as $t \to t_k^*$,
where
\be\label{gmnhmn2}
\begin{split}
	&	\tilde{g}^{\pm}_{m,n,p}:=\frac{d^p( \widetilde{G}^{\pm}_{M,m,n}(r(t)-x_i^*,s(t)-y_j^*) \cdot(s'(t),-r'(t)))}{p!dt^p}\Big|_{t=t_k^*},\\
	 & \tilde{h}^{\pm}_{m,n,p}:=\frac{d^p( \widetilde{H}^{\pm}_{M,m,n}(r(t)-x_i^*,s(t)-y_j^*) \cdot(s'(t),-r'(t)))}{p!dt^p}\Big|_{t=t_k^*},
\end{split}
\ee
{\small{
		\begin{align*}
			& \widetilde{G}^{\pm}_{M,m,n}(x,y) = \nabla G^{\pm}_{M,m,n}(x,y)\sum_{(m,n)\in \ind_{M-1}} a_{\pm}^{(m,n)} \tfrac{x^m y^{n}}{m!n!},\ \ \widetilde{H}^{\pm}_{M,m,n}(x,y) = \nabla H^{\pm}_{M,m,n}(x,y)\sum_{(m,n)\in \ind_{M-1}} a_{\pm}^{(m,n)} \tfrac{x^m y^{n} }{m!n!}.
\end{align*}}}
By \eqref{gp:gGammap},
\be\label{transs:jump2}
g_\Gamma(t)\sqrt{(r'(t))^2+(s'(t))^2}
=\sum_{p=0}^{M-1} g_\Gamma^{(p)} (t-t_k^*)^p+\bo((t-t_k^*)^{M}),\quad \text{as} \quad t \to t_k^*.
\ee
So
$\left[a\nabla  u \cdot \nv \right]=\gn$ on $\Gamma$ with \eqref{transs:flux}--\eqref{transs:jump2} implies
\be \label{interface:flux:0}
\sum_{(m,n)\in \ind_{M}^1}
(u_+^{(m,n)}\tilde{g}^+_{m,n,p}-u_-^{(m,n)} \tilde{g}^-_{m,n,p})=g_{\Gamma}^{(p)}+
\sum_{(m,n)\in \ind_{M-2}} (f_-^{(m,n)}\tilde{h}^{-}_{m,n,p}-f_+^{(m,n)}
\tilde{h}^{+}_{m,n,p}),
\ee
where $p=0,\ldots,M-1$. By the definition of $\tilde{g}^{\pm}_{m,n,p}$ in \eqref{gmnhmn2}, \eqref{Gmn}, and $(r(t_k^*)-x_i^*,s(t_k^*)-y_j^*)=(0,0)$,
we have  $\tilde{g}^{\pm}_{m,n,p-1}=0$ for $m+n>p$. So \eqref{interface:flux:0} with $p$ being replaced by $p-1$ yields
\be \label{interface:flux:1}
\begin{split}
	&u_-^{(0,p)}\tilde{g}^{-}_{0,p,p-1}
	+u_-^{(1,p-1)} \tilde{g}^{-}_{1,p-1,p-1}
	=u_+^{(0,p)}\tilde{g}^{+}_{0,p,p-1}
	+u_+^{(1,p-1)} \tilde{g}^{+}_{1,p-1,p-1}+\sum_{(m,n)\in \ind_{p-1}^1}
	u_{+}^{(m,n)}\tilde{g}^{+}_{m,n,p-1}\\
	&\qquad \qquad-\sum_{(m,n)\in \ind_{p-1}^1}
u_{-}^{(m,n)}\tilde{g}^{-}_{m,n,p-1}-\sum_{(m,n)\in \ind_{M-2}} ( f_-^{(m,n)}\tilde{h}^{-}_{m,n,p-1}-f_+^{(m,n)}
	\tilde{h}^{+}_{m,n,p-1}) -g_{\Gamma}^{(p-1)},
\end{split}
\ee
where $p=1,\ldots,M$. By \eqref{gmnhmn2}, \eqref{Gmn} and $(r(t_k^*)-x_i^*,s(t_k^*)-y_j^*)=(0,0)$, we also have
%
\be\label{gmnhmntide}
\tilde{g}^{\pm}_{m,n,p-1}=a_{\pm}^{(0,0)}\frac{d^{p-1}( \nabla G_{m,n}(r(t)-x_i^*,s(t)-y_j^*) \cdot(s'(t),-r'(t)))}{ (p-1)! dt^{p-1}}\Big|_{t=t_k^*},
\ee
for $(m,n)\in\{(0,p),(1,p-1)\}$.
Similarly, $\left[u\right]=\gd$ on $\Gamma$ implies $u_{-}^{(0,0)}
=u_{+}^{(0,0)}-g^{(0)}$  and
\be \label{interface:u:1}
\begin{split}
	 &u_{-}^{(0,p)}g^{-}_{0,p,p}+u_{-}^{(1,p-1)}g^{-}_{1,p-1,p}
	 =u_{+}^{(0,p)}g^{+}_{0,p,p}+u_{+}^{(1,p-1)}g^{+}_{1,p-1,p} +\sum_{(m,n)\in \ind_{p-1}^1} u_{+}^{(m,n)}g^{+}_{m,n,p}\\
	&\qquad \qquad \qquad -\sum_{(m,n)\in \ind_{p-1}^1} u_{-}^{(m,n)}g^{-}_{m,n,p}-
	\sum_{(m,n)\in \ind_{M-2}} (f_-^{(m,n)}h^{-}_{m,n,p}-f_+^{(m,n)}
	h^{+}_{m,n,p})-g^{(p)},
\end{split}
\ee
for $p=1,\ldots,M$, where
{\small{
\be\label{gmnhmn0}
g^{\pm}_{m,n,p}:= \frac{d^p(G^{\pm}_{M,m,n}(r(t)-x_i^*,s(t)-y_j^*))}{p!dt^p}\Big|_{t=t_k^*},\ \ h^{\pm}_{m,n,p}:=\frac{d^p(H^{\pm}_{M,m,n}(r(t)-x_i^*,s(t)-y_j^*))}{p!dt^p}\Big|_{t=t_k^*},
\ee}}
%
\be\label{gmnhmn1}
g^{\pm}_{m,n,p}= \frac{d^p(G_{m,n}(r(t)-x_i^*,s(t)-y_j^*))}{p!dt^p}\Big|_{t=t_k^*}, \quad (m,n)\in\{(0,p),(1,p-1)\}.
\ee
By $(r'(t_k^*))^2+(s'(t_k^*))^2>0$, $a_{\pm}^{(0,0)}> 0$, \eqref{gmnhmntide} and \eqref{gmnhmn1},   \cite[(A.15)-(A.28)]{FHM21a}  implies
\be\label{GThan0}
g^{-}_{0,p,p}\tilde{g}^{-}_{1,p-1,p-1}- g^{-}_{1,p-1,p}
\tilde{g}^{-}_{0,p,p-1}=a_{-}^{(0,0)}p((r'(t_k^*))^2+(s'(t_k^*))^2)^p/(p!)^2>0, \quad  p=1,\ldots,M.
\ee
%

Now, \eqref{tranmiss:cond}  can be obtained by solving  \eqref{interface:flux:1} and \eqref{interface:u:1} recursively in the order $p=1,\dots,M$,
and  $u_{-}^{(0,0)}
=u_{+}^{(0,0)}-g^{(0)}$ (\eqref{GThan0} implies the uniqueness and existence of \eqref{tranmiss:cond}). Since all $\{u_{\pm}^{(m,n)}:(m,n)\in \ind_{M}^1\setminus \ind_{p}^1\}$ vanish in \eqref{interface:flux:1} and \eqref{interface:u:1}, we have $T^{u_{+}}_{m',n',m,n}=0$ for $m+n>m'+n'$ in
\eqref{tranmiss:cond}.

The uniqueness of \eqref{tranmiss:cond} and $u_{-}^{(0,0)}
=u_{+}^{(0,0)}-g^{(0)}$ lead to $T^{u_+}_{0,0,0,0}=1$. \eqref{GM100} implies $G^{\pm}_{M,0,0}(x,y)=1$ for all $M\in \NN$. Then, by \eqref{gmnhmn2} and \eqref{gmnhmn0}, 	 $\tilde{g}^{\pm}_{0,0,p-1}=g^{\pm}_{0,0,p}=0$ in \eqref{interface:flux:1} and \eqref{interface:u:1} for $1\le p$. So $T^{u_+}_{m',n',0,0}=0$ for  $(m',n')\ne (0,0)$ in \eqref{tranmiss:cond}. Thus we proved \eqref{T0000}.

By \eqref{Hmn}, \eqref{gmnhmntide}, \eqref{gmnhmn1} and $(r(t_k^*)-x_i^*,s(t_k^*)-y_j^*)=(0,0)$, we have that $\tilde{g}^{\pm}_{0,p,p-1},\tilde{g}^{\pm}_{1,p-1,p-1}$ in \eqref{interface:flux:1}  only depend on $a_{\pm}^{(0,0)}$, $(r'(t_k^*), s'(t_k^*))$, and $g^{\pm}_{0,p,p},g^{\pm}_{1,p-1,p}$ in \eqref{interface:u:1} only depend on $(r'(t_k^*), s'(t_k^*))$. So each $T^{u_+}_{m',n',m,n}$ with $m+n=m'+n'$ in \eqref{tranmiss:cond} only depends on $a_{\pm}^{(0,0)}$ and $(r'(t_k^*), s'(t_k^*))$.

\end{proof}
\begin{proof}[\textbf{Proof of \cref{thm:13point:scheme}}]
 In the following statement, \eqref{Lhu:irregular}--\eqref{Imn:irregular} are used to derive \eqref{13point:scheme}--\eqref{right:irregular}, \eqref{U00:coeff}--\eqref{irregular:simplify2} are used to  derive \eqref{irregular:simplify3} and \eqref{Matrix:Form:irregular}.\\	
	For $(x_i^*,y_j^*)\in \Gamma$ and $(x_i,y_j)=(x_i^*+v_0h,y_j^*+w_0h)$, we define
	\[
	h^{-1}\mathcal{L}_h u:=h^{-1} \Big(\sum_{k=-1}^1 \sum_{\ell=-1}^1
	C_{k,\ell}  u(x_i+kh,y_j+\ell h)+ \sum_{k=-2,2}
	C_{k,0}  u(x_i+kh,y_j)+  \sum_{\ell=-2,2}
	C_{0,\ell}  u(x_i,y_j+\ell h)\Big),
	\]
	where
	$C_{k,\ell}:=\sum_{p=0}^{M} c_{k,\ell,p}h^p$ with $c_{k,\ell,p} \in \R$.
	According to \eqref{u:approx:ir:key:2},
	\be\label{Lhu:irregular}
	\begin{split}
		&h^{-1}\mathcal{L}_h u
		=h^{-1}\sum_{(k,\ell)\in \tilde{d}_{i,j}^+}
		C_{k,\ell}    u(x_i^*+v_1h,y_j^*+w_1 h)  +h^{-1} \sum_{(k,\ell)\in \tilde{d}_{i,j}^-}
		C_{k,\ell}  u(x_i^*+v_1h,y_j^*+w_1 h)\\
		&=h^{-1} \sum_{(m,n)\in \ind_{M}^1}
		\Big(u_+^{(m,n)}  I^{+}_{m,n}+	 u_-^{(m,n)}  I^{-}_{m,n}\Big)+ \sum_{(m,n)\in \ind_{M-2}}
		\Big(	f_+ ^{(m,n)} J^{+,0}_{m,n}+
		f_- ^{(m,n)} J^{-,0}_{m,n}\Big)+\bo(h^{M}),\\
	\end{split}
	\ee
	with
	\be	\label{IJmn:irregular}
	\begin{split}
		& I^\pm_{m,n}:=\sum_{(k,\ell)\in \tilde{d}_{i,j}^\pm}
		C_{k,\ell} G^{\pm}_{M,m,n}(v_1h,w_1 h), \quad J^{\pm,0}_{m,n}:=h^{-1}\sum_{(k,\ell)\in \tilde{d}_{i,j}^\pm} C_{k,\ell}  H^{\pm}_{M,m,n}(v_1h, w_1 h),\\
		& v_1=v_0+k, \quad w_1=w_0+\ell,
		\quad \tilde{d}_{i,j}^\pm =d_{i,j}^\pm \cup e_{i,j}^\pm.
	\end{split}
	\ee
	\eqref{tranmiss:cond} implies
	\begin{align*}
		h^{-1} \sum_{(m',n')\in \ind_{M}^1}
		u_-^{(m',n')}   I^-_{m',n'}
		=&h^{-1} \sum_{(m,n)\in \ind_{M}^1} u_+^{(m,n)}
		J^{u_+,T}_{m,n}+
		\sum_{(m,n)\in \ind_{M-2}}
		f_+^{(m,n)} J^{+,T}_{m,n}\\
		&+\sum_{(m,n)\in \ind_{M-2}} f_-^{(m,n)} J^{-,T}_{m,n}+\sum_{p=0}^{M} g^{(p)}   J^{g}_p+\sum_{p=0}^{M-1} g^{(p)}_{\Gamma}   J^{g_\Gamma}_p,
	\end{align*}
	with
	\be
	\label{IJmn:2:irregular}
	\begin{split}
		&J^{u_+,T}_{m,n}:=
		\sum_{  \substack{ (m',n')\in \ind_{M}^{1} \\  m'+n' \ge m+n}} I^-_{m',n'} T^{u_+}_{m',n',m,n}, \quad J^{\pm,T}_{m,n}:=h^{-1}
		\sum_{(m',n')\in \ind_{M}^1} I^-_{m',n'} T^\pm_{m',n',m,n},\\
		&J^{g}_{p}:=h^{-1}
		\sum_{(m',n')\in \ind_{M}^1} I^-_{m',n'} T^{g}_{m',n',p},\quad
		J^{g_\Gamma}_{p}:=h^{-1}
		\sum_{(m',n')\in \ind_{M}^1} I^-_{m',n'} T^{g_\Gamma}_{m',n',p}.
	\end{split}
	\ee
	We define that
	\be \label{Lhuh:irregular}
	\begin{split}
		h^{-1}	\mathcal{L}_h u_h & :=h^{-1}	 \Big( \sum_{k=-1}^{1}\sum_{\ell =-1}^{1} C_{k,\ell}(u_{h})_{i+k,j+\ell}  +  \sum_{k=-2,2} C_{k,0}(u_{h})_{i+k,j}+\sum_{\ell=-2,2} C_{0,\ell }(u_{h})_{i,j+\ell} \Big)\\
		&=\sum_{(m,n)\in \ind_{M-2}} \left(f_-^{(m,n)}J^-_{m,n}
		+f_+^{(m,n)}J^+_{m,n}\right)+ \sum_{p=0}^{M} g^{(p)} J^{g}_p+\sum_{p=0}^{M-1} g^{(p)}_{\Gamma} J^{g_\Gamma}_p,
	\end{split}
	\ee
	where
	$J^{\pm}_{m,n}:= J_{m,n}^{\pm,0}+J^{\pm,T}_{m,n}$ with
 $J_{m,n}^{\pm,0}$ and $J^{\pm,T}_{m,n}$  in \eqref{IJmn:irregular} and \eqref{IJmn:2:irregular} respectively.
	We conclude from \eqref{Lhu:irregular}--\eqref{Lhuh:irregular} that
	$
	h^{-1} \mathcal{L}_h (u-u_h)=\bo(h^M),
	$
	if
	\be \label{Imn:irregular}
	I^+_{m,n}+J^{u_+,T}_{m,n}=\bo(h^{M+1}), \quad \mbox{ for all }\; (m,n)\in \ind_{M}^1,
	\ee
	where $I^{\pm}_{m,n}$ and $J^{u_+,T}_{m,n}$ are defined in \eqref{IJmn:irregular} and \eqref{IJmn:2:irregular} respectively.
By the symbolic calculation,
	\eqref{Imn:irregular} has a nontrivial solution $\{C_{k,\ell}\}$ if and only if $M\le 5$.
	 \eqref{IJmn:irregular}--\eqref{Imn:irregular} with $M=5$ yield \eqref{13point:scheme}--\eqref{right:irregular} in \cref{thm:13point:scheme}.	

	In order to derive \eqref{irregular:simplify3} and \eqref{Matrix:Form:irregular} to solve \eqref{Imn:irregular} efficiently, let us consider the following \eqref{U00:coeff}--\eqref{irregular:simplify2}:
	\eqref{IJmn:irregular}, \eqref{IJmn:2:irregular} and $m=n=0$ in \eqref{Imn:irregular} result in
	\be \label{U00:coeff}
	\sum_{(k,\ell)\in \tilde{d}_{i,j}^+}
	C_{k,\ell} G^{+}_{M,0,0}(v_1h,w_1 h)+	 \sum_{ \substack{ (m',n')\in \ind_{M}^{1} \\  m'+n' \ge 0}} I^-_{m',n'} T^{u_+}_{m',n',0,0}=\bo(h^{M+1}).
	\ee
	By \eqref{T0000},
	\eqref{U00:coeff} becomes
	\[\sum_{(k,\ell)\in \tilde{d}_{i,j}^+}
	C_{k,\ell} G^{+}_{M,0,0}(v_1h,w_1 h)+	 I^-_{0,0}=\bo(h^{M+1}).\]
	Using $I^-_{m,n}$ in \eqref{IJmn:irregular}, we obtain
	\be \label{U00:coeff:2}
	\sum_{(k,\ell)\in \tilde{d}_{i,j}^+}
	C_{k,\ell} G^{+}_{M,0,0}(v_1h,w_1 h)+	 \sum_{(k,\ell)\in \tilde{d}_{i,j}^-}
	C_{k,\ell} G^{-}_{M,0,0}(v_1h,w_1 h)=\bo(h^{M+1}).
	\ee		
By \eqref{GM100},
	$G^{\pm}_{M,0,0}(x,y)=1$.
	$C_{k,\ell}:=\sum_{p=0}^{M} c_{k,\ell,p}h^p$ and \eqref{U00:coeff:2} lead to
	\be\label{ckl:sum:irregular}
	\sum_{(k,\ell)\in \tilde{d}_{i,j}^+}
	c_{k,\ell,p} +	 \sum_{(k,\ell)\in \tilde{d}_{i,j}^-}
	c_{k,\ell,p} =0,	\quad \mbox{for} \quad p=0,\dots, M,
	\ee
	i.e.,
	\be\label{ckl:sum:2:irregular}	
	\sum_{k=-2,2} c_{k,0,p}+\sum_{\ell=-2,2} c_{0,\ell,p}+\sum_{k=-1}^{1} \sum_{\ell=-1}^{1}
	c_{k,\ell,p}=0, \quad \mbox{for} \quad p=0,\dots, M.
	\ee	
	By  \eqref{IJmn:irregular} and \eqref{IJmn:2:irregular}, \eqref{Imn:irregular} becomes
	\[
	\begin{split}
		&\sum_{(k,\ell)\in \tilde{d}_{i,j}^+}
		C_{k,\ell} G^{+}_{M,m,n}(v_1h,w_1h)+	 \sum_{ \substack{ (m',n')\in \ind_{M}^1 \\  m'+n' \ge m+n}}  T^{u_+}_{m',n',m,n} \sum_{(k,\ell)\in \tilde{d}_{i,j}^-}
		C_{k,\ell} G^{-}_{M,m',n'}(v_1h,w_1h) =\bo(h^{M+1}),
	\end{split}
	\]
	for all $(m,n)\in \ind_{M}^1$.
By $C_{k,\ell}:=\sum_{p=0}^{M} c_{k,\ell,p}h^p$, \eqref{Imn:irregular} is equivalent to
	{\small{
	\be \label{irregular:plug}
	\begin{split}
		&\sum_{(k,\ell)\in \tilde{d}_{i,j}^+}
		\sum_{p=0}^{M} c_{k,\ell,p}h^p G^{+}_{M,m,n}(v_1h,w_1h)	 + \sum_{(k,\ell)\in \tilde{d}_{i,j}^-} \sum_{p=0}^{M} c_{k,\ell,p}h^p\sum_{ \substack{ (m',n')\in \ind_{M}^1 \\  m'+n' \ge m+n}}
		T^{u_+}_{m',n',m,n} G^{-}_{M,m',n'}(v_1h,w_1h)=\bo(h^{M+1}),
	\end{split}
	\ee	}}
	for all $(m,n)\in \ind_{M}^1$.	
	By \eqref{Gmn},	
	\eqref{irregular:plug} is equivalent to
	\be \label{irregular:plug:all}
	\begin{split}
		&\sum_{(k,\ell)\in \tilde{d}_{i,j}^+}
		\sum_{p=0}^{M} c_{k,\ell,p}h^p G_{m,n}(v_1h,w_1 h)+\sum_{(k,\ell)\in \tilde{d}_{i,j}^+}
		\sum_{p=0}^{M} c_{k,\ell,p}h^p G^{2,+}_{m,n}(v_1h,w_1h)\\
		&+	\sum_{(k,\ell)\in \tilde{d}_{i,j}^-} \sum_{p=0}^{M} c_{k,\ell,p}h^p \sum_{ \substack{ (m',n')\in \ind_{M}^1 \\  m'+n' \ge m+n}}
		T^{u_+}_{m',n',m,n} G_{m',n'}(v_1h,w_1 h) \\
		&+	\sum_{(k,\ell)\in \tilde{d}_{i,j}^-} \sum_{p=0}^{M} c_{k,\ell,p}h^p \sum_{ \substack{ (m',n')\in \ind_{M}^1 \\  m'+n' \ge m+n}}
		T^{u_+}_{m',n',m,n} G^{2,-}_{m',n'}(v_1h,w_1h)=\bo(h^{M+1}),
	\end{split}
	\ee	
	for all $(m,n)\in \ind_{M}^1$,
	where
	$G^{2,\pm}_{m',n'}(x,y)=\sum_{(p,q)\in \ind_{M}^2 \setminus \ind_{m'+n'}^2 }a^{u_{\pm}}_{p,q,m',n'}  \frac{x^{p} y^{q}}{p!q!}$.
	For the sake of brevity, we define that $\textsf{degree}(f)$ is the degree of $f$ of $h$.
	By $\textsf{degree}(G_{m,n}(v_1h,w_1h))= m+n$, and $\textsf{degree}(G^{2,\pm}_{m,n}(v_1h,w_1h))> m+n$,
 non-zero terms of $h^{m+n+d}$ in \eqref{irregular:plug:all} with $0\le d \le M-m-n$ and $(m,n)\in \ind_{M}^1$ lead to
	\be\label{irregular:simplify1}
	\begin{split}
		& \sum_{(k,\ell)\in \tilde{d}_{i,j}^+}c_{k,\ell,d}h^d G_{m,n}(v_1h,w_1h)+  \sum_{(k,\ell)\in \tilde{d}_{i,j}^+} 	 \sum_{s=0}^{M} c_{k,\ell,s}
		h^s  \sum_{ \substack{ (p,q)\in \ind_{M}^2 \setminus \ind_{m+n}^2 \\ p+q=m+n+d-s}} \frac{a^{u_{+}}_{p,q,m,n}}{p!q!}  v_1^{p}w_1^{q} h^{p+q}  \\
		&+	\sum_{(k,\ell)\in \tilde{d}_{i,j}^-} 	 \sum_{s=0}^{M} c_{k,\ell,s}h^s \sum_{\substack{(m',n')\in \ind_{M}^1 \\ m'+n'\ge m+n \\ m'+n'=m+n+d-s} }
		T^{u_+}_{m',n',m,n} G_{m',n'}(v_1h,w_1h) \\
		&+	\sum_{(k,\ell)\in \tilde{d}_{i,j}^-} 	 \sum_{s=0}^{M} c_{k,\ell,s}	h^s \sum_{\substack{(m',n')\in \ind_{M}^1 \\ m'+n'\ge m+n } }
		T^{u_+}_{m',n',m,n}  \sum_{ \substack{ (p,q)\in \ind_{M}^2 \setminus \ind_{m'+n'}^2 \\ p+q=m+n+d-s}} \frac{a^{u_{-}}_{p,q,m',n'}}{p!q!} v_1^{p}w_1^{q} h^{p+q} =\bo(h^{M+1}).
	\end{split}
	\ee	
	Note that $(p,q)\in \ind_{M}^2 \setminus \ind_{m+n}^2$ implies $p+q>m+n$.
	In the first row of \eqref{irregular:simplify1},  $m+n+d-s =p+q> m+n\Rightarrow$ $m+n+d-s > m+n\Rightarrow$  $s\le d-1$.
	In the second row of \eqref{irregular:simplify1},  $m+n+d-s \ge m+n\Rightarrow$   $s\le d$.
	In the third row of \eqref{irregular:simplify1},  $m+n+d-s =p+q>m'+n' \ge m+n\Rightarrow$ $m+n+d-s> m+n\Rightarrow$ $s\le d-1$.
	So \eqref{irregular:simplify1} is equivalent to
	\be\label{irregular:simplify2}
	\begin{split}
		& \sum_{(k,\ell)\in \tilde{d}_{i,j}^+}c_{k,\ell,d} G_{m,n}(v_1,w_1)+  \sum_{(k,\ell)\in \tilde{d}_{i,j}^+} 	 \sum_{s=0}^{d-1} c_{k,\ell,s}
		\sum_{ \substack{ (p,q)\in \ind_{M}^2 \setminus \ind_{m+n}^2 \\ p+q=m+n+d-s}} \frac{a^{u_{+}}_{p,q,m,n}}{p!q!}  v_1^{p}w_1^{q}  \\
		&+	\sum_{(k,\ell)\in \tilde{d}_{i,j}^-} 	 \sum_{s=0}^{d} c_{k,\ell,s}	 \sum_{\substack{(m',n')\in \ind_{M}^1 \\ m'+n'=m+n+d-s} }
		T^{u_+}_{m',n',m,n} G_{m',n'}(v_1,w_1) \\
		&+	\sum_{(k,\ell)\in \tilde{d}_{i,j}^-} 	 \sum_{s=0}^{d-1} c_{k,\ell,s}	 \sum_{\substack{(m',n')\in \ind_{M}^1 \\ m'+n'\ge m+n } }
		T^{u_+}_{m',n',m,n}  \sum_{ \substack{ (p,q)\in \ind_{M}^2 \setminus \ind_{m'+n'}^2 \\ p+q=m+n+d-s}} \frac{a^{u_{-}}_{p,q,m',n'}}{p!q!}  v_1^{p}w_1^{q} =0,
	\end{split}
	\ee	
	where $0\le d \le M-m-n$ and $(m,n)\in \ind_{M}^1$. Note that the summation $\sum_{s=0}^{d-1}$ in \eqref{irregular:simplify2} is empty for $d=0$. By a direct calculation of \eqref{irregular:simplify2} with $M=5$, we can obtain \eqref{irregular:simplify3} and \eqref{Matrix:Form:irregular}.
\end{proof}
%
%
%
%
\subsection{6-point and 4-point compact stencils at boundary points}\label{Appendix:Boundary}
In this subsection, we  discuss how to find the 6-point FDM with the sixth-order consistency centered at $(x_i,y_j) \in \Gamma_1$ in \cref{thm:Robin:Gamma1}, where $(x_i,y_j)$ is not the corner point (see \cref{fig:model:problem,Ckl:6:points:Gamma1}).
Then we  discuss how to find the 4-point FDM with the sixth-order consistency centered at the corner point $(x_i,y_j)=(l_1,l_3)$ in \cref{thm:Corner:1} (see \cref{fig:model:problem,4:points:scheme:Corner1,Ckl:4:points:Corner1}).
In this subsection, we choose $(x_i^*,y_j^*)=(x_i,y_j)$, i.e., $v_0=w_0=0$ in \eqref{base:pt}
and use the following notations:
\be\label{g1ng3n}
\begin{split}	
	&\alpha^{(n)}:=\frac{d^{n} \alpha}{ dy^n }(y_j^*),
	\quad {g_1}^{(n)}:=\frac{d^{n} g_1}{ dy^n }(y_j^*), \quad \beta^{(m)}:=\frac{d^{m} \beta}{ dx^m }(x_i^*),
	\quad {g_3}^{(m)}:=\frac{d^{m} g_3}{ dx^m }(x_i^*),
\end{split}
\ee
which are their $n$th or $m$th  derivatives at the base point $(x_i^*,y_j^*)$.
For the sake of presentation,  we establish the  following auxiliary identities \eqref{u1n:Gamma1}--\eqref{Em:tilde:beta} for the proofs of \cref{thm:Robin:Gamma1,thm:Corner:1}.
Since $-u_x+\alpha u=g_1$ on $\Gamma_1$, choose $(x_i^*,y_j^*)=(x_i,y_j)\in\Gamma_1$, we have $u^{(1,0)}={\alpha}^{(0)}u^{(0,0)} - g_1^{(0)}$. Then
\be\label{u1n:Gamma1}
u^{(1,n)} = \sum_{i=0}^n  {n\choose i}  {\alpha}^{(n-i)}u^{(0,i)} - g_1^{(n)},\qquad \text{ for all } n = 0,\dots, M-1.
\ee
By \eqref{u:approx} with $M$ being replaced by $M-1$ and \eqref{u1n:Gamma1}, choose $(x_i^*,y_j^*)=(x_i,y_j)\in\Gamma_1$, we have (see \cref{Robin:1:umn}):
\begin{align*}
	u(x+x_i^*,y+y_j^*)
	& = \sum_{n=0}^{M}
	u^{(0,n)} G_{M,0,n}(x,y)+\sum_{n=0}^{M-1}
	\bigg( \sum_{i=0}^n  {n\choose i}  {\alpha}^{(n-i)}u^{(0,i)} -g_1^{(n)}  \bigg) G_{M,1,n}(x,y) \\
	& \qquad +\sum_{(m,n)\in \ind_{M-2}}
	f^{(m,n)} H_{M,m,n}(x,y)+\bo(h^{M+1})\\
	&=\sum_{n=0}^{M}
	u^{(0,n)} G_{M,0,n}(x,y) +\sum_{i=0}^{M-1}u^{(0,i)}\bigg(
	\sum_{n=i}^{M-1}  {n\choose i}  {\alpha}^{(n-i)} G_{M,1,n}(x,y) \bigg) \\
	& \qquad -\sum_{n=0}^{M-1}
	g_{1}^{(n)}  G_{M,1,n}(x,y) +\sum_{(m,n)\in \ind_{M-2}}
	f^{(m,n)} H_{M,m,n}(x,y) +\bo(h^{M+1}),
\end{align*}
for  $x,y\in (-2h,2h)$, i.e.,
\be\label{u0n:En:alpha}
\begin{split}
	u(x+x_i^*,y+y_j^*)	&= \sum_{n=0}^{M}u^{(0,n)}E_n(x,y)-\sum_{n=0}^{M-1}
	g_{1}^{(n)}  G_{M,1,n}(x,y)\\
	& \quad   +\sum_{(m,n)\in \ind_{M-2}}
	f^{(m,n)} H_{M,m,n}(x,y) +\bo(h^{M+1}), 	
\end{split}
\ee
where
\be\label{En:alpha}
E_n(x,y)=G_{M,0,n}(x,y)+(1-\delta_{n,M})
\sum_{i=n}^{M-1}  {i\choose n}  {\alpha}^{(i-n)} G_{M,1,i}(x,y),
\ee
and $\delta_{n,M}=1$ if $n=M$, and  $\delta_{n,M}=0$ if $n\ne M$.
\begin{figure}[h]
	\centering
	\hspace{12mm}	
	\begin{subfigure}[b]{0.3\textwidth}
		\begin{tikzpicture}[scale = 0.45]
			\node (A) at (0,7) {$u^{(0,0)}$};
			\node (A) at (0,6) {$u^{(0,1)}$};
			\node (A) at (0,5) {$u^{(0,2)}$};
			\node (A) at (0,4) {$u^{(0,3)}$};
			\node (A) at (0,3) {$u^{(0,4)}$};
			\node (A) at (0,2) {$u^{(0,5)}$};
			\node (A) at (0,1) {$u^{(0,6)}$};
			\node (A) at (2,7) {$u^{(1,0)}$};
			\node (A) at (2,6) {$u^{(1,1)}$};
			\node (A) at (2,5) {$u^{(1,2)}$};
			\node (A) at (2,4) {$u^{(1,3)}$};
			\node (A) at (2,3) {$u^{(1,4)}$};
			\node (A) at (2,2) {$u^{(1,5)}$};	
			\node (A) at (4,7) {$u^{(2,0)}$};
			\node (A) at (4,6) {$u^{(2,1)}$};
			\node (A) at (4,5) {$u^{(2,2)}$};
			\node (A) at (4,4) {$u^{(2,3)}$};
			\node (A) at (4,3) {$u^{(2,4)}$};
			\node (A) at (6,7) {$u^{(3,0)}$};
			\node (A) at (6,6) {$u^{(3,1)}$};
			\node (A) at (6,5) {$u^{(3,2)}$};
			\node (A) at (6,4) {$u^{(3,3)}$};
			\node (A) at (8,7) {$u^{(4,0)}$};
			\node (A) at (8,6) {$u^{(4,1)}$};
			\node (A) at (8,5) {$u^{(4,2)}$};
			\node (A) at (10,7) {$u^{(5,0)}$};
			\node (A) at (10,6) {$u^{(5,1)}$};
			\node (A) at (12,7) {$u^{(6,0)}$};
			\draw[ultra thick, blue][->] (7,4) to[left] (18,4);     	
		\end{tikzpicture}
	\end{subfigure}
	\begin{subfigure}[b]{0.3\textwidth}
		\hspace{3cm}
		\begin{tikzpicture}[scale = 0.45]
			\node (A) at (0,7) {$u^{(0,0)}$};
			\node (A) at (0,6) {$u^{(0,1)}$};
			\node (A) at (0,5) {$u^{(0,2)}$};
			\node (A) at (0,4) {$u^{(0,3)}$};
			\node (A) at (0,3) {$u^{(0,4)}$};
			\node (A) at (0,2) {$u^{(0,5)}$};
			\node (A) at (0,1) {$u^{(0,6)}$};
			\node (A) at (2,7) {$u^{(1,0)}$};
			\node (A) at (2,6) {$u^{(1,1)}$};
			\node (A) at (2,5) {$u^{(1,2)}$};
			\node (A) at (2,4) {$u^{(1,3)}$};
			\node (A) at (2,3) {$u^{(1,4)}$};
			\node (A) at (2,2) {$u^{(1,5)}$};
			\draw[ultra thick, blue][->] (3,4) to[left] (6,4);  	      	
		\end{tikzpicture}
	\end{subfigure}
	\begin{subfigure}[b]{0.3\textwidth}
		\hspace{0.6cm}
		\begin{tikzpicture}[scale = 0.45]
			\node (A) at (0,7) {$u^{(0,0)}$};
			\node (A) at (0,6) {$u^{(0,1)}$};
			\node (A) at (0,5) {$u^{(0,2)}$};
			\node (A) at (0,4) {$u^{(0,3)}$};
			\node (A) at (0,3) {$u^{(0,4)}$};
			\node (A) at (0,2) {$u^{(0,5)}$};
			\node (A) at (0,1) {$u^{(0,6)}$};      	
		\end{tikzpicture}
	\end{subfigure}
	\caption
	{The illustration for  \eqref{u0n:En:alpha} with $M=6$.}
	\label{Robin:1:umn}
\end{figure}
Choose $(x_i^*,y_j^*)=(x_i,y_j)\in\Gamma_3$, $-u_y+\beta u=g_3$ on $\Gamma_3$ implies $u^{(0,1)}={\beta}^{(0)}u^{(0,0)} - g_3^{(0)}$, and
\be\label{um1:Gamma3}
u^{(m,1)} = \sum_{i=0}^m  {m\choose i}  {\beta}^{(m-i)}u^{(i,0)} - g_3^{(m)},\qquad \text{ for all } m = 0,\dots, M-1.
\ee
Similarly to \eqref{u:original}--\eqref{Hmn}, we  have (see \cref{Corner:umn}):
\be
\label{u:approx:tilde}
u(x+x_i^*,y+y_j^*)  = \sum_{(n,m)\in \ind_{M+1}^{1}} u^{(m,n)} \tilde{G}_{M+1,m,n}(x,y) + \sum_{(m,n)\in \ind_{M-1}} f^{(m,n)} \tilde{H}_{M+1,m,n}(x,y) + \mathcal{O}(h^{M+2}),
\ee
for $x,y\in (-2h,2h)$ and $(x_i^*,y_j^*)\in \overline{\Omega}$ with
\be\label{Gmn:tilde}
\tilde{G}_{M+1,m,n}(x,y):=\sum_{(q,p)\in \ind_{M+1} }\tilde{a}^{u}_{p,q,m,n} \frac{x^{p} y^{q}}{p!q!}, \qquad 	 \tilde{H}_{M+1,m,n}(x,y):=
\sum_{(q,p)\in \ind_{M+1}^{2}  }\tilde{a}^{f}_{p,q,m,n}  \frac{x^{p} y^{q}}{p!q!},	
\ee
where
  $\tilde{a}^u_{p, q, m, n}$ and $\tilde{a}^f_{p, q, m, n}$  are uniquely determined by $\{a^{(i,j)}: (i,j) \in \ind_{M}\}$, and can be obtained similarly as \eqref{uderivx2}--\eqref{recursive:au}.
Choose $(x_i^*,y_j^*)=(x_i,y_j)\in\Gamma_3$,  \eqref{u:approx:tilde} with $M$ being replaced by $M-1$ and \eqref{um1:Gamma3} imply (see \cref{Corner:umn}):
\be\label{um0:Em:beta}
\begin{split}
	u(x+x_i^*,y+y_j^*)	 &=\sum_{m=0}^{M}u^{(m,0)} \tilde{E}_m(x,y)  -\sum_{m=0}^{M-1}
	g_{3}^{(m)}  \tilde{G}_{M,m,1}(x,y)\\
	& \quad  +\sum_{(m,n)\in \ind_{M-2}}
	f^{(m,n)} \tilde{H}_{M,m,n}(x,y) +\bo(h^{M+1}),
\end{split}
\ee
where
\be\label{Em:tilde:beta}
\tilde{E}_m(x,y)=\tilde{G}_{M,m,0}(x,y)+(1-\delta_{m,M})
\sum_{i=m}^{M-1}  {i\choose m}  {\beta}^{(i-m)} \tilde{G}_{M,i,1}(x,y).
\ee
\begin{figure}[htbp]
	\begin{subfigure}[b]{0.3\textwidth}
		\hspace{-3cm}
		\begin{tikzpicture}[scale = 2.5]
			\draw[help lines,step = 0.5]
			(0,-1/2) grid (1.5,1/2);
			\node at (0,0.5)[circle,fill,inner sep=2pt,color=black]{};
			\node at (0,0)[circle,fill,inner sep=2pt,color=red]{};
			\node at (0,-0.5)[circle,fill,inner sep=2pt,color=black]{};
			\node at (0.5,0.5)[circle,fill,inner sep=2pt,color=black]{};
			\node at (0.5,0)[circle,fill,inner sep=2pt,color=black]{};
			\node at (0.5,-0.5)[circle,fill,inner sep=2pt,color=black]{};
			\node (A) at (0.22,0.6) {{{$u_{0,j+1}$}}};
			\node (A) at (0.15,0.1) {{{$u_{0,j}$}}};
			\node (A) at (0.22,-0.4) {{{$u_{0,j-1}$}}};
			\node (A) at (0.72,0.6) {{{$u_{1,j+1}$}}};
			\node (A) at (0.65,0.1) {{{$u_{1,j}$}}};
			\node (A) at (0.72,-0.4) {{{$u_{1,j-1}$}}};
			\node (A) at (-0.2,0.1) {$\Gamma_1$};
			\node (A) at (-0.5,-0.1) {	$ \tfrac{\partial u}{\partial \nv}+{\color{blue}{\alpha}} u=g_1$};
		\end{tikzpicture}
	\end{subfigure}	
	\begin{subfigure}[b]{0.3\textwidth}
		\begin{tikzpicture}[scale = 2.5]
			\draw[help lines,step = 0.5]
			(0,-1/2) grid (1.5,1/2);
			\node at (0,0.5)[circle,fill,inner sep=2pt,color=black]{};
			\node at (0,0)[circle,fill,inner sep=2pt,color=red]{};
			\node at (0,-0.5)[circle,fill,inner sep=2pt,color=black]{};
			\node at (0.5,0.5)[circle,fill,inner sep=2pt,color=black]{};
			\node at (0.5,0)[circle,fill,inner sep=2pt,color=black]{};
			\node at (0.5,-0.5)[circle,fill,inner sep=2pt,color=black]{};
			\node (A) at (0.15,0.6) {{{$C_{0,1}$}}};
			\node (A) at (0.15,0.1) {{{$C_{0,0}$}}};
			\node (A) at (0.20,-0.4) {{{$C_{0,-1}$}}};
			\node (A) at (0.65,0.6) {{{$C_{1,1}$}}};
			\node (A) at (0.65,0.1) {{{$C_{1,0}$}}};
			\node (A) at (0.7,-0.4) {{{$C_{1,-1}$}}};
			\node (A) at (-0.2,0.1) {$\Gamma_1$};
			\node (A) at (-0.5,-0.1) {	$ \tfrac{\partial u}{\partial \nv}+{\color{blue}{\alpha}} u=g_1$};
		\end{tikzpicture}
	\end{subfigure}
	\caption
	{The illustration for the 6-point scheme in \eqref{stencil:regular:Robin:1} of \cref{thm:Robin:Gamma1}.}
	\label{Ckl:6:points:Gamma1}
\end{figure}
Now, we  discuss the  6-point FDM with the sixth-order consistency centered at the point $(x_i,y_j)\in \Gamma_1$ in the following theorem (see \cref{Ckl:6:points:Gamma1}).
\begin{theorem}\label{thm:Robin:Gamma1}
	Let  $(x_0,y_j)\in \Gamma_1$ and $(x_i^*,y_j^*)=(x_i,y_j)=(x_0,y_j)$. Then
	the following 6-point scheme  centered at $(x_0,y_j)$ (see \cref{Ckl:6:points:Gamma1}):
	\be	 \label{stencil:regular:Robin:1}
	\begin{split}
	h^{-1}	\mathcal{L}_h u_h  :=
	h^{-1} \sum_{k=0}^{1}\sum_{\ell =-1}^{1} C_{k,\ell}(u_{h})_{k,j+\ell}  =&	 h^{-1}\sum_{(m,n)\in \ind_{4}} f^{(m,n)}  \sum\limits_{k=0}^1 \sum\limits_{\ell=-1}^1 C_{k,\ell} H_{6,m,n}(kh, \ell h)\\
	& -h^{-1}\sum_{n=0}^{5}g_{1}^{(n)}  \sum\limits_{k=0}^1 \sum\limits_{\ell=-1}^1 C_{k,\ell} G_{6,1,n}(kh, \ell h),
	\end{split}
	\ee
	achieves the sixth-order consistency  for $\frac{\partial u}{\partial \nv}+\alpha u=g_1$ at the point $(x_0,y_j)$,
	where $\{	C_{k,\ell} : C_{k,\ell}:=\sum_{p=0}^{6} c_{k,\ell,p}h^p,\ c_{k,\ell,p}\in \R\}_{ k\in\{0,1\},\ \ell\in\{-1,0,1\} }$ is any nontrivial solution of the linear system induced by
	\be\label{Ckl:Gamma1:EQ:M=6}
	\begin{split}
		&\sum_{k=0}^1 \sum_{\ell=-1}^1 C_{k,\ell} \bigg( G_{6,0,n}(kh, \ell h) +(1-\delta_{n,6})
		\sum_{i=n}^5  {i\choose n}  {\alpha}^{(i-n)} G_{6,1,i}(kh, \ell h)  \bigg)\\
		&=\bo(h^{7}),\quad \mbox{for all} \quad n=0,\dots,6,  \ \text{with} \quad \delta_{6,6}=1 \quad \mbox{and}\quad
		\delta_{n,6}=0 \quad \mbox{for } \quad n\ne 6.
	\end{split}
	\ee
	By the symbolic calculation,
	the linear system in \eqref{Ckl:Gamma1:EQ:M=6} always has nontrivial solutions.
	 Furthermore,
	\begin{itemize}
		\item[(i)] One necessary condition for $\{C_{k,\ell}\}$ in \eqref{stencil:regular:Robin:1} to satisfy \eqref{suff:nece:sum:cond} is  $\alpha(y_j)\ge 0$;
		\item[(ii)] There must exist a nontrivial solution of \eqref{Ckl:Gamma1:EQ:M=6} such that
		$\{C_{k,\ell}\}$ in \eqref{stencil:regular:Robin:1}  satisfies the sign condition \eqref{sign:condition} and the sum condition \eqref{sum:condition} for any mesh size $h$ if $\alpha(y_j)\ge 0$.
	\end{itemize}
\end{theorem}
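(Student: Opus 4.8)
The plan is to mirror the structure of the proof of \cref{thm:regular:interior}, adapting the argument to the half-stencil attached to $\Gamma_1$ and to the boundary identity \eqref{u0n:En:alpha}. First I would apply the operator $\mathcal{L}_h$ in \eqref{stencil:regular:Robin:1} to the exact solution $u$ and substitute the expansion \eqref{u0n:En:alpha} with $M=6$ and $(x_i^*,y_j^*)=(x_0,y_j)$. This produces
$h^{-1}\mathcal{L}_h u = h^{-1}\sum_{n=0}^{6} u^{(0,n)} I_n + (\text{data terms}) + \bo(h^{6})$,
where $I_n := \sum_{k=0}^1\sum_{\ell=-1}^1 C_{k,\ell} E_n(kh,\ell h)$ and $E_n$ is defined in \eqref{En:alpha}. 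Comparing with the right-hand side of \eqref{stencil:regular:Robin:1}, which is exactly the assembled $f$- and $g_1$-contributions, we get $h^{-1}\mathcal{L}_h(u-u_h)=\bo(h^6)$ precisely when $I_n=\bo(h^{7})$ for all $n=0,\dots,6$; writing out $E_n$ via \eqref{En:alpha} this is \eqref{Ckl:Gamma1:EQ:M=6}. Existence of a nontrivial solution is asserted to follow from symbolic computation: the linear system has $6\times 9$ (after organizing by powers of $h$ as in \eqref{Matrix:Form:2}) equations in the $6\times 6=36$ unknowns $c_{k,\ell,p}$, so it is underdetermined and the solution space is nontrivial; I would state this and give the rank from the symbolic check.

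For item~(i), the necessary condition $\alpha(y_j)\ge 0$: I would look at the lowest-order ($h^0$) rows of \eqref{Ckl:Gamma1:EQ:M=6} together with the sum condition \eqref{suff:nece:sum:cond} for $p=0$. The key is that $G_{6,0,0}\equiv 1$ by \eqref{GM100} and $G_{6,1,0}(x,y)$ has leading term $x$, so the $n=0$ equation at order $h^0$ forces a linear relation among the $c_{k,\ell,0}$ that, once combined with $\sum c_{k,\ell,0}\ge 0$ and the sign condition $c_{k,\ell,0}\le 0$ for $(k,\ell)\ne(0,0)$, can only be consistent when $\alpha^{(0)}=\alpha(y_j)\ge 0$. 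Concretely, I expect that summing the $h^0$-level equations appropriately isolates a term proportional to $\alpha(y_j)$ times a sum of $c_{0,\ell,0}\le 0$ (boundary-column coefficients), and the sum-nonnegativity then pins down the sign of $\alpha(y_j)$. I would carry this out by solving the $h^0$ subsystem $A_0 C_0 = \mathbf{0}$ explicitly (a small constant matrix depending only on $\alpha(y_j)$) and reading off the sign constraint.

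For item~(ii), I would follow the recursive construction used in the proof of \cref{thm:regular:interior}: decompose \eqref{Ckl:Gamma1:EQ:M=6} into the block-triangular form $A_0 C_0=\mathbf{0}$, $A_d C_d = b_d$ for $d=1,\dots,5$, $A_6 C_6 = 0$, where $b_d$ depends on $C_0,\dots,C_{d-1}$ and on the derivatives $\{a^{(i,j)}\},\{\alpha^{(n)}\}$. Assuming $\alpha(y_j)\ge 0$, I first pick a particular nontrivial $C_0$ satisfying the strict sign condition \eqref{suff:nece:sign:cond} at $p=0$ (the explicit solution of $A_0C_0=\mathbf{0}$, available by symbolic calculation, has a free parameter whose sign can be chosen so that $c_{0,0,0}>0$, all other $c_{k,\ell,0}\le 0$, and the row sums are $\ge 0$ — here $\alpha(y_j)\ge 0$ is what keeps this cone nonempty). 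Then, at each level $d=1,\dots,6$, the solution set of $A_dC_d=b_d$ has free parameters; I choose them (taking the analogue of the "maximum free-parameter" rule in (S2)) so that the sign and sum conditions \eqref{suff:nece:sign:cond}--\eqref{suff:nece:sum:cond} hold at order $p=d$. The main obstacle is exactly this last step: verifying that at every level the feasible interval for the free parameter(s) is nonempty, i.e.\ that the affine constraints coming from $b_d$ never over-constrain the sign pattern. In the interior case this is guaranteed by an explicit symbolic computation producing the formulas \eqref{ckl0:degree:0}--\eqref{ckl7:degree:7:interval}; here I would produce the analogous closed-form parametrizations of the solution sets of $A_dC_d=b_d$ (now the entries of the $r$'s also involve $\alpha^{(n)}$), check that all the appearing quantities are finite (using $a>0$ and smoothness of $\alpha$), and exhibit one nonempty sub-interval of each feasibility region — the condition $\alpha(y_j)\ge 0$ entering only through the $d=0$ level, which then propagates. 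This symbolic verification, rather than any conceptual difficulty, is the crux.
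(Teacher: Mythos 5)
Your derivation of the consistency relations follows the paper's route exactly: apply $\mathcal{L}_h$ to the exact solution, insert the boundary expansion \eqref{u0n:En:alpha} with $M=6$, match the $f$- and $g_1$-contributions against the right-hand side of \eqref{stencil:regular:Robin:1}, and reduce sixth-order consistency to $I_n=\bo(h^{7})$ for $n=0,\dots,6$, i.e.\ \eqref{Ckl:Gamma1:EQ:M=6}; the block-triangular reduction $A_0C_0=\mathbf{0}$, $A_dC_d=b_d$ is also what the paper does. (A minor slip: there are $6\times 7=42$ unknowns $c_{k,\ell,p}$, not $36$, since $p$ ranges over $0,\dots,6$.)

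The genuine gap is in where you locate the role of $\alpha(y_j)$. The matrix $A_0$ in \eqref{A0:Gamma1} is a constant matrix that does not depend on $\alpha$ at all: at order $h^0$ the only surviving contribution to the $n=0$ equation is $G_{6,0,0}\equiv 1$ (by \eqref{GM100}), because each $G_{6,1,i}(kh,\ell h)$ has degree at least $i+1$ in $h$. Hence the $h^0$ level yields only $\sum_{k,\ell}c_{k,\ell,0}=0$ and imposes no constraint on $\alpha(y_j)$; solving ``the $h^0$ subsystem depending on $\alpha(y_j)$'' as you propose would return nothing. The necessary condition in item~(i) actually emerges one level up: since $G_{6,1,0}(x,y)=x+\cdots$, the $h^1$ coefficient of the $n=0$ equation gives $\sum_{k,\ell}c_{k,\ell,1}=-\alpha^{(0)}\sum_{k,\ell}k\,c_{k,\ell,0}=-6\alpha^{(0)}c_{1,1,0}$, and combining the sign condition $c_{1,1,0}<0$ at level $0$ with the sum condition \eqref{suff:nece:sum:cond} at $p=1$ forces $\alpha^{(0)}\ge 0$ (this is precisely \eqref{Gamma1:ckl1:degree:1} in the paper). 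Note also that the relevant combination involves the interior-column coefficients $c_{1,\ell,0}$ ($k=1$), not the boundary-column ones $c_{0,\ell,0}$ as you conjectured. Correspondingly, in item~(ii) your claim that $\alpha(y_j)\ge 0$ ``enters only through the $d=0$ level and then propagates'' is not how the feasibility works: in the paper's parametrizations \eqref{Gamma1:ckl1:degree:1}--\eqref{Gamma1:ckl6:degree:6} the sum $\sum_{k,\ell}c_{k,\ell,d}$ at every level $d\ge 2$ equals $-\alpha^{(0)}$ times a combination of the level-$(d-1)$ free parameters plus a remainder, so $\alpha^{(0)}\ge0$ is invoked at each level through the thresholds $s_d$ in \eqref{Gamma1:sd:12}--\eqref{Gamma1:sd:345}, with a separate case for $\alpha^{(0)}=0$. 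Your level-by-level feasibility strategy is the right one, but as written the verification of (i) would fail and the inductive mechanism claimed in (ii) is mis-stated.
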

\begin{proof}[\textbf{Proof of \cref{thm:Robin:Gamma1}}]
	 In the following statement, \eqref{Lh:u:Gamma1}--\eqref{Gamma1:EQ:explicit} are used to derive \eqref{stencil:regular:Robin:1} and \eqref{Ckl:Gamma1:EQ:M=6}, \eqref{Matrix:Form:1:Gamma1}--\eqref{Gamma1:sd:345} are used to derive the necessary and sufficient condition for $\{C_{k,\ell}\}$ in \eqref{stencil:regular:Robin:1} to satisfy \eqref{suff:nece:sign:cond} and \eqref{suff:nece:sum:cond}, and prove items (i) and (ii).
	Let
	\[
	\begin{split}
		& \frac{1}{h}	\mathcal{L}_h u  := \frac{1}{h} \sum_{k=0}^{1} \sum_{\ell=-1}^{1} C_{k,\ell} u(x_0 + kh, y_j + \ell h),\quad C_{k,\ell}:=\sum_{p=0}^{M} c_{k,\ell,p}h^p, \quad  c_{k,\ell,p}\in \R,
		\quad k=0,1,\ \ell=-1,0,1.
	\end{split}
	\]
	\eqref{u0n:En:alpha} and \eqref{En:alpha} with $x_i^*=x_i=x_0$ and $y_j^*=y_j$ result in
	\be\label{Lh:u:Gamma1}
	\begin{split}
		& h^{-1}	\mathcal{L}_h u   = 	 h^{-1} \sum_{n=0}^M u^{(0,n)} I_{n} + \sum_{(m,n) \in \ind_{	 M-2}} f^{(m,n)} J_{m,n}  +\sum_{n=0}^{M-1}g_{1}^{(n)}J_{g_{1},n} +\bo(h^{M}),
	\end{split}
	\ee
	where
	\be\label{In:Gamma1}
	\begin{split}
		&	I_{n}:=\sum_{k=0}^{1} \sum_{\ell=-1}^{1} C_{k,\ell}E_{n}  (kh, \ell h),\qquad J_{m,n}:= h^{-1} \sum_{k=0}^{1} \sum_{\ell=-1}^{1}	 C_{k,\ell} H_{M,m,n}  (kh, \ell h), \\  & J_{g_1,n}:=  -h^{-1} \sum\limits_{k=0}^1 \sum\limits_{\ell=-1}^1 C_{k,\ell} G_{M,1,n}(kh, \ell h),
	\end{split}
	\ee	
	and $E_{n}  (kh, \ell h)$ is defined in \eqref{En:alpha}.
	We define
	\be\label{Lh:uh:Gamma1}
	h^{-1}	\mathcal{L}_h u_h:=  h^{-1} \sum_{k=0}^{1} \sum_{\ell=-1}^{1} C_{k,\ell} (u_h)_{k,j+\ell}  = \sum_{(m,n) \in \ind_{M-2}} f^{(m,n)} J_{m,n} +\sum_{n=0}^{M-1}g_{1}^{(n)}J_{g_{1},n}.
	\ee
	We deduce from \eqref{Lh:u:Gamma1} and \eqref{Lh:uh:Gamma1} that
	$
	h^{-1}	\mathcal{L}_h (u- u_h)=\bo(h^{M}),
    $
	if
	\be\label{Gamma1:EQ}
	\sum_{k=0}^{1} \sum_{\ell=-1}^{1} C_{k,\ell}E_{n}  (kh, \ell h)=\bo(h^{M+1}), \quad  \text{for all} \quad n=0,\dots, M.
	\ee
	where $E_{n}  (kh, \ell h)$ is defined in \eqref{En:alpha}.
By the symbolic calculation,
	\eqref{Gamma1:EQ} has a nontrivial solution $\{C_{k,\ell}\}$ if and only if $M\le 6$.
		Plugging $E_{n}(kh, \ell h)$ in \eqref{En:alpha} and $I_{n}$ in \eqref{In:Gamma1} into \eqref{Gamma1:EQ}, we have
	\be \label{Gamma1:EQ:explicit}
	\begin{split}
		& \sum_{k=0}^1 \sum_{\ell=-1}^1 C_{k,\ell}\bigg(  G_{M,0,n}(kh,\ell h )+(1-\delta_{n,M})
		\sum_{i=n}^{M-1}  {i\choose n}  {\alpha}^{(i-n)} G_{M,1,i}(kh,\ell h) \bigg) =\bo(h^{M+1}),
	\end{split}
	\ee
	for all $n=0,\dots, M$.
	We can obtain \eqref{stencil:regular:Robin:1}--\eqref{Ckl:Gamma1:EQ:M=6} in \cref{thm:Robin:Gamma1} by \eqref{In:Gamma1}--\eqref{Gamma1:EQ:explicit} with $M=6$.

		Next we use the following \eqref{Matrix:Form:1:Gamma1}--\eqref{Gamma1:ckl6:degree:6} to prove items (i) and (ii) in \cref{thm:Robin:Gamma1}.
	Using similar steps as
	 \eqref{regular:putGmn}--\eqref{Matrix:Form:1} or \eqref{Imn:irregular}--\eqref{irregular:simplify2},
	the system of linear equations in \eqref{Gamma1:EQ:explicit} can be represented in the following matrix form:
	\be\label{Matrix:Form:1:Gamma1}
A_0C_0=\textbf{0},\quad
A_d C_d=b_d,\quad d=1,\dots,M,
	\text{ with }
	C_d:=(c_{0,-1,d},
		c_{0,0,d},
		c_{0,1,d},
		c_{1,-1,d},
		c_{1,0,d},
		c_{1,1,d})^{\textsf{T}},
	\ee
	where all $A_d$ with $0\le d\le M$ are constant matrices,
	$b_d$  depends on $\{ C_i : 0\le i \le d-1 \}$, $\{a^{(i,j)}: (i,j) \in \ind_{M-1}\}$ and $\{\alpha^{(i)}:  0\le i \le M-1 \}$ for $1\le d\le M$.
	For example, similar to \eqref{A0:Regular}--\eqref{A1:A7:regular}, $\{A_d: 0\le d\le 6 \}$ in \eqref{Matrix:Form:1:Gamma1} with $M=6$ is
	{\small{
\be\label{A0:Gamma1}
A_0=	\begin{pmatrix}
1& 1& 1& 1& 1& 1\\	
-1& 0& 1& -1& 0& 1\\
1/2& 0& 1/2& 0& -1/2& 0\\
-1/6& 0& 1/6& 1/3& 0& -1/3\\
1/24& 0& 1/24& -1/6& 1/24& -1/6\\
-1/120& 0& 1/120& 1/30& 0& -1/30\\
1/720& 0& 1/720& 0& -1/720& 0\\
	\end{pmatrix}, \quad \text{the size of $A_0$ is } 7\times 6,\quad  A_1=A_0(1:6,:),
\ee}}
	 \be\label{Ad:Gamma1}
	\begin{split}
	A_2=A_0(1:5,:),	\quad A_3=A_0(1:4,:),\quad A_4=A_0(1:3,:),\quad  A_5=A_0(1:2,:), \quad A_6=A_0(1,:),
	\end{split}
	\ee
where the submatrix $A_0(1:n,:)$ consists of the first $n$ rows of $A_0$.
	All solutions of \eqref{Matrix:Form:1:Gamma1} with $M=6$ can be represented as
	\be\label{Gamma1:ckl0:degree:0}
	\begin{split}
		& c_{0,0,0}=-10c_{1,1,0},\qquad
		 c_{0,-1,0}=c_{0,1,0}=2c_{1,1,0},\qquad  c_{1,0,0}=4c_{1,1,0}, \qquad c_{1,-1,0}=c_{1,1,0},\\
		& 	 c_{1,1,0}  \text{ is free,} \text{ and } \{c_{k,\ell,0}\} \text{ satisfies }  \sum_{k=0}^{1} \sum_{\ell=-1}^{1}
		c_{k,\ell,0}=0,
	\end{split}
	\ee
	{\small{
	\be\label{Gamma1:ckl1:degree:1}
	\begin{split}
		& c_{0,0,1}=-10c_{1,1,1}+r_{1,1},\quad
		c_{0,-1,1}=2c_{1,1,1}+r_{1,2},\quad c_{0,1,1}=2c_{1,1,1}+r_{1,3},\quad  c_{1,0,1}=4c_{1,1,1}+r_{1,4}, \\
		& c_{1,-1,1}=c_{1,1,1}+r_{1,5}, \quad	 c_{1,1,1} \text{ is free,} \text{ and } \{c_{k,\ell,1}\} \text{ satisfies }  \sum_{k=0}^{1} \sum_{\ell=-1}^{1}
		c_{k,\ell,1}=6\alpha^{(0)},
	\end{split}
	\ee}}
	{\small{
	\be\label{Gamma1:ckl2:degree:2}
	\begin{split}
		& c_{0,0,2}=-10c_{1,1,2}+r_{2,1},\quad
		c_{0,-1,2}=2c_{1,1,2}+r_{2,2},\quad c_{0,1,2}=2c_{1,1,2}+r_{2,3},\quad  c_{1,0,2}=4c_{1,1,2}+r_{2,4}, \\
		&   c_{1,-1,2}=c_{1,1,2}+r_{2,5}, \quad	  c_{1,1,2} \text{ is free,} \text{ and } \{c_{k,\ell,2}\} \text{ satisfies }  \sum_{k=0}^{1} \sum_{\ell=-1}^{1}
		 c_{k,\ell,2}=-6\alpha^{(0)}c_{1,1,1}+r_{1,6},
	\end{split}
	\ee}}
	\be\label{Gamma1:ckl3:degree:3}
	\begin{split}
		& c_{0,0,3}=-2c_{1,0,3}-2c_{1,1,3}+r_{3,1},\quad
		 c_{0,-1,3}=\frac{1}{2}c_{1,0,3}+r_{3,2},\quad c_{0,1,3}=\frac{1}{2}c_{1,0,3}+r_{3,3}, \\
		& 	c_{1,-1,3}=c_{1,1,3}+r_{3,4},\quad \{ c_{1,0,3}, c_{1,1,3} \} \text{ are free,} \text{ and }   \sum_{k=0}^{1} \sum_{\ell=-1}^{1}
		 c_{k,\ell,3}=-6\alpha^{(0)}c_{1,1,2}+r_{2,6},
	\end{split}
	\ee
	\be\label{Gamma1:ckl4:degree:4}
	\begin{split}
		& c_{0,0,4}=-c_{1,-1,4}-2c_{1,0,4}-c_{1,1,4}+r_{4,1},\quad
		 c_{0,-1,4}=-\frac{1}{2}c_{1,-1,4}+\frac{1}{2}c_{1,0,4}+\frac{1}{2}c_{1,1,4}+r_{4,2}, \\ & c_{0,1,4}=\frac{1}{2}c_{1,-1,4}+\frac{1}{2}c_{1,0,4}-\frac{1}{2}c_{1,1,4}+r_{4,3}, \quad	  \{ c_{1,-1,4}, c_{1,0,4}, c_{1,1,4} \} \text{ are free}\\
		& \text{and } \{c_{k,\ell,4}\} \text{ satisfies }  \sum_{k=0}^{1} \sum_{\ell=-1}^{1}
		c_{k,\ell,4}=-\alpha^{(0)}( c_{1,0,3}+2c_{1,1,3})+r_{3,5},
	\end{split}
	\ee
	\be\label{Gamma1:ckl5:degree:5}
	\begin{split}
		& c_{0,0,5}=-2c_{0,1,5}-c_{1,0,5}-2c_{1,1,5}+r_{5,1},\quad
		 c_{0,-1,5}=c_{0,1,5}-c_{1,-1,5}+c_{1,1,5}+r_{5,2},  \\
		& \{ c_{0,1,5}, c_{1,-1,5}, c_{1,0,5}, c_{1,1,5} \}  \text{ are free, } \text{and }  \sum_{k=0}^{1} \sum_{\ell=-1}^{1}
		c_{k,\ell,5}=-\alpha^{(0)}( c_{1,-1,4}+c_{1,0,4}+c_{1,1,4})+r_{4,4},
	\end{split}
	\ee
	{\small{
	\be\label{Gamma1:ckl6:degree:6}
	\begin{split}
		& c_{0,-1,6}=-c_{0,0,6}-c_{0,1,6}-c_{1,-1,6}-c_{1,0,6}-c_{1,1,6}+r_{6,1},\  \{ c_{0,0,6}, c_{0,1,6}, c_{1,-1,6}, c_{1,0,6}, c_{1,1,6} \} \text{ are }\\
		& \text{free, and } \{c_{k,\ell,6}\} \text{ satisfies }  \sum_{k=0}^{1} \sum_{\ell=-1}^{1}
		c_{k,\ell,6}=-\alpha^{(0)}( c_{1,-1,5}+c_{1,0,5}+c_{1,1,5})+r_{5,3},
	\end{split}
	\ee}}
	where $\{r_{d,p}\}$ is determined by   $\{c_{k,\ell,s}\}$ with $0\le s\le d-1$, $\{a^{(i,j)}: (i,j) \in \ind_{5}\}$ and  $\{\alpha^{(i)}:  0\le i \le 5 \}$ for $d=1,\dots,6$.
	By $\sum_{k=0}^{1} \sum_{\ell=-1}^{1}
	c_{k,\ell,0}=0$ in \eqref{Gamma1:ckl0:degree:0} and $\sum_{k=0}^{1} \sum_{\ell=-1}^{1}
	c_{k,\ell,1}=6\alpha^{(0)}$ in \eqref{Gamma1:ckl1:degree:1},  $\{ c_{k,\ell,1} \}$ satisfies \eqref{suff:nece:sum:cond} if $\alpha^{(0)}\ge 0$. So, we proved the item (i) in \cref{thm:Robin:Gamma1}.

	For $\alpha^{(0)}=0$, we can check that $\{ c_{k,\ell,p} \}$ in \eqref{Gamma1:ckl0:degree:0}--\eqref{Gamma1:ckl6:degree:6} satisfies $\sum_{k=0}^{1} \sum_{\ell=-1}^{1}
	c_{k,\ell,p}=0$ for $p=0,\dots,6$.
	Let
	\be\label{Gamma1:Set:Free}
	\begin{split}
		& c_{1,1,0}=-1, \qquad c_{1,0,3}=c_{1,1,3}, \qquad  c_{1,-1,4}=c_{1,0,4}=c_{1,1,4},  \\
		& c_{0,1,5}=c_{1,-1,5}=c_{1,0,5}=c_{1,1,5}, \qquad c_{0,1,6}=c_{1,-1,6}=c_{1,0,6}=0, \qquad c_{0,0,6}=-2c_{1,1,6}.
	\end{split}
	\ee
	Then the non-empty intervals of  $\{c_{1,1,d}\}_{d=1,\dots,6}$ such that $\{c_{k,\ell,p}\}$ in \eqref{Gamma1:ckl0:degree:0}--\eqref{Gamma1:ckl6:degree:6} satisfies \eqref{suff:nece:sign:cond} and \eqref{suff:nece:sum:cond} for $\alpha^{(0)}\ge  0$ are
	{\small{
	\be\label{Gamma1:Interval}
	\begin{split}
		&  c_{1,1,d} \le  t_d:=\min \{\tfrac{r_{d,1}}{10}, \tfrac{-r_{d,2}}{2},\tfrac{-r_{d,3}}{2},\tfrac{-r_{d,4}}{4}, -r_{d,5}, s_d, 0 \} \quad \text{with} \quad d=1,2,\\
		&   c_{1,1,3} \le  t_3:=\min  \{ \tfrac{r_{3,1}}{4}, -2r_{3,2}, -2r_{3,3}, -r_{3,4}, s_3, 0  \},\quad  c_{1,1,4} \le  t_4:=  \min \{\tfrac{r_{4,1}}{4}, -2r_{4,2}, -2r_{4,3}, s_4, 0 \},\\
		& c_{1,1,5} \le  t_5:= \min \{\tfrac{r_{5,1}}{5}, -r_{5,2}, s_5, 0 \},\qquad  c_{1,1,6} \le  t_6:=\min\{-r_{6,1}, 0 \},
	\end{split}
	\ee}}
	where
	\be\label{Gamma1:sd:12}
	s_d=
	\begin{cases}
		\frac{r_{d,6}}{6\alpha^{(0)}}, &\mbox{ if }  \alpha^{(0)}>0,\\
		0, & \mbox{ if }  \alpha^{(0)}=0,
	\end{cases}\qquad d=1,2,
	\ee	
	\be\label{Gamma1:sd:345}
	s_3=
	\begin{cases}
		\frac{r_{3,5}}{3\alpha^{(0)}}, &\mbox{ if }  \alpha^{(0)}>0,\\
		0, & \mbox{ if }  \alpha^{(0)}=0,
	\end{cases}\quad
	s_4=
	\begin{cases}
		\frac{r_{4,4}}{3\alpha^{(0)}}, &\mbox{ if }  \alpha^{(0)}>0,\\
		0, & \mbox{ if }  \alpha^{(0)}=0,
	\end{cases}\quad
	s_5=
	\begin{cases}
		\frac{r_{5,3}}{3\alpha^{(0)}}, &\mbox{ if }  \alpha^{(0)}>0,\\
		0, & \mbox{ if }  \alpha^{(0)}=0.
	\end{cases}
	\ee	
By the symbolic calculation,  all  $r_{d,p}$ $\ne \pm\infty$ in \eqref{Gamma1:Interval}--\eqref{Gamma1:sd:345}  by $a\ne 0$ in $\overline{\Omega}$.
	Thus, we proved the item (ii) in \cref{thm:Robin:Gamma1}.
\end{proof}
\noindent
\textbf{Stencil coefficients $\{C_{k,\ell}\}$ in \cref{thm:Robin:Gamma1}  forming an M-matrix  for numerical tests:}
To verify the 6-point scheme \eqref{stencil:regular:Robin:1} of  \cref{thm:Robin:Gamma1} with numerical experiments in \cref{sec:numerical}, we use the unique $\{C_{k,\ell}\}$  by solving $A_0C_0=\textbf{0}$ and $A_dC_d=b_d$ in \eqref{Matrix:Form:1:Gamma1} with $M=6$, \eqref{Gamma1:Set:Free} and choosing  $c_{1,1,d}$ to be the maximum value such that
\[
\begin{cases}
c_{k,\ell,d}\ge 0, &\quad \mbox{if} \quad (k,\ell)=(0,0),\\
c_{k,\ell,d}\le  0, &\quad \mbox{if} \quad (k,\ell)\ne(0,0),
\end{cases}\quad \text{and} \quad
\sum_{k}\sum_{\ell} c_{k,\ell,d} \ge 0, \quad \text{for} \quad d=1,\dots,6.
\]
By the proof of  \cref{thm:Robin:Gamma1}, if $\alpha(y_j)\ge 0$ for $l_3< y_j <l_4$, then the above unique $\{C_{k,\ell}\}$ must exist and satisfy the sign condition \eqref{sign:condition}  and the sum condition \eqref{sum:condition}  for any $h$.
Similarly, we can obtain  6-point schemes with the sixth-order consistency at $(x_i,y_j)\in \cup_{p=2}^4\Gamma_p$ (see \cref{fig:model:problem}).

Next, we  discuss the  4-point FDM with the sixth-order consistency centered at the corner point $(x_i,y_j)=(l_1,l_3)$ in the following theorem (see \cref{4:points:scheme:Corner1,Ckl:4:points:Corner1}).
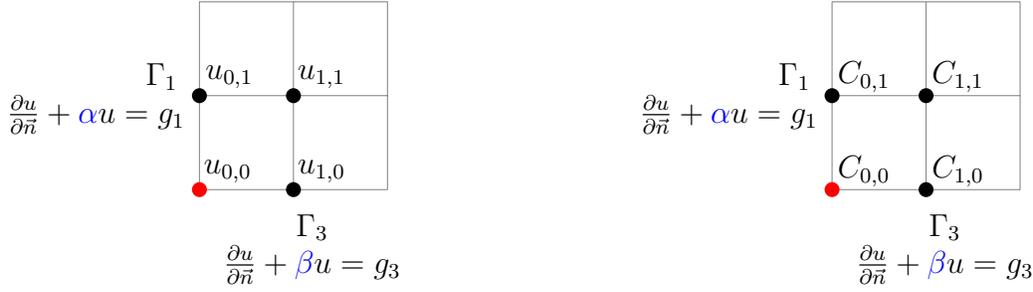
\begin{figure}[htbp]
	\begin{subfigure}[b]{0.3\textwidth}
		\hspace{-3cm}
		\begin{tikzpicture}[scale = 2.5]
			\draw[help lines,step = 0.5]
			(0,0) grid (1,1);
			\node at (0,0.5)[circle,fill,inner sep=2pt,color=black]{};
			\node at (0,0)[circle,fill,inner sep=2pt,color=red]{};
			\node at (0.5,0.5)[circle,fill,inner sep=2pt,color=black]{};
			\node at (0.5,0)[circle,fill,inner sep=2pt,color=black]{};
			\node (A) at (0.16,0.6) {$u_{0,1}$};
			\node (A) at (0.15,0.1) {$u_{0,0}$};
			\node (A) at (0.65,0.6) {$u_{1,1}$};
			\node (A) at (0.65,0.1) {$u_{1,0}$};
			\node (A) at (-0.2,0.6) {$\Gamma_1$};
			\node (A) at (-0.55,0.4) {$\tfrac{\partial u}{\partial \nv}+{\color{blue}{\alpha}} u=g_1$};
			\node (A) at (0.6,-0.2) {$\Gamma_3$};
			\node (A) at (0.6,-0.4) {$\tfrac{\partial u}{\partial \nv}+{\color{blue}{\beta}} u=g_3$};
		\end{tikzpicture}
	\end{subfigure}
	\begin{subfigure}[b]{0.3\textwidth}
		\begin{tikzpicture}[scale = 2.5]
			\draw[help lines,step = 0.5]
			(0,0) grid (1,1);
			\node at (0,0.5)[circle,fill,inner sep=2pt,color=black]{};
			\node at (0,0)[circle,fill,inner sep=2pt,color=red]{};
			\node at (0.5,0.5)[circle,fill,inner sep=2pt,color=black]{};
			\node at (0.5,0)[circle,fill,inner sep=2pt,color=black]{};
			\node (A) at (0.17,0.6) {$C_{0,1}$};
			\node (A) at (0.17,0.1) {$C_{0,0}$};
			\node (A) at (0.67,0.6) {$C_{1,1}$};
			\node (A) at (0.67,0.1) {$C_{1,0}$};
			\node (A) at (-0.2,0.6) {$\Gamma_1$};
			\node (A) at (-0.55,0.4) {$\tfrac{\partial u}{\partial \nv}+{\color{blue}{\alpha}} u=g_1$};
			\node (A) at (0.6,-0.2) {$\Gamma_3$};
			\node (A) at (0.6,-0.4) {$\tfrac{\partial u}{\partial \nv}+{\color{blue}{\beta}} u=g_3$};
		\end{tikzpicture}
	\end{subfigure}
	\caption
	{ The illustration for the 4-point scheme in \eqref{stencil:corner:1} in \cref{thm:Corner:1}. }
	\label{4:points:scheme:Corner1}
\end{figure}
\begin{figure}[htbp]
	\begin{subfigure}[b]{0.3\textwidth}
		\hspace{-1.5cm}
		\begin{tikzpicture}[scale = 2.5]
			\draw[help lines,step = 0.5]
			(0,0) grid (1,1);
			\node at (0,0.5)[circle,fill,inner sep=2pt,color=black]{};
			\node at (0,0)[circle,fill,inner sep=2pt,color=red]{};
			\node at (0.5,0.5)[circle,fill,inner sep=2pt,color=black]{};
			\node at (0.5,0)[circle,fill,inner sep=2pt,color=black]{};
			\node (A) at (0.16,0.6) {$C_{0,1}$};
			\node (A) at (0.15,0.1) {$C_{0,0}$};
			\node (A) at (0.65,0.6) {$C_{1,1}$};
			\node (A) at (0.65,0.1) {$C_{1,0}$};
			\node (A) at (-0.15,0.5) {$\Gamma_1$};
			\node (A) at (-0.55,0.6) {};
			\node (A) at (0.5,-0.2) {$\Gamma_3$};
			\node (A) at (0.8,-0.4) {};
			\node (A) at (1.5,0.4) {\color{red}\huge{$=$}};
		\end{tikzpicture}
	\end{subfigure}
	\begin{subfigure}[b]{0.3\textwidth}
		\hspace{-1cm}
		\begin{tikzpicture}[scale = 2.5]
			\draw[help lines,step = 0.5]
			(0,0) grid (1,1);
			\node at (0,0.5)[circle,fill,inner sep=2pt,color=black]{};
			\node at (0,0)[circle,fill,inner sep=2pt,color=red]{};
			\node at (0.5,0.5)[circle,fill,inner sep=2pt,color=black]{};
			\node at (0.5,0)[circle,fill,inner sep=2pt,color=black]{};
			\node (A) at (0.15,0.6) {$\hat{C}_{0,1}$};
			\node (A) at (0.15,0.1) {$\hat{C}_{0,0}$};
			\node (A) at (0.65,0.6) {$\hat{C}_{1,1}$};
			\node (A) at (0.65,0.1) {$\hat{C}_{1,0}$};
			\node (A) at (-0.15,0.6) {$\Gamma_1$};
			\node (A) at (-0.55,0.4) {$\tfrac{\partial u}{\partial \nv}+{\color{blue}{\alpha}} u=g_1$};
			\node (A) at (0.8,-0.1) {};
			\node (A) at (0.8,-0.4) {};
			\node (A) at (1.5,0.4) {\color{red}\huge{$+$}};
		\end{tikzpicture}
	\end{subfigure}
	\begin{subfigure}[b]{0.3\textwidth}
		\hspace{1cm}
		\begin{tikzpicture}[scale = 2.5]
			\draw[help lines,step = 0.5]
			(0,0) grid (1,1);
			\node at (0,0.5)[circle,fill,inner sep=2pt,color=black]{};
			\node at (0,0)[circle,fill,inner sep=2pt,color=red]{};
			\node at (0.5,0.5)[circle,fill,inner sep=2pt,color=black]{};
			\node at (0.5,0)[circle,fill,inner sep=2pt,color=black]{};
			\node (A) at (0.15,0.6) {$\tilde{C}_{0,1}$};
			\node (A) at (0.15,0.1) {$\tilde{C}_{0,0}$};
			\node (A) at (0.65,0.6) {$\tilde{C}_{1,1}$};
			\node (A) at (0.65,0.1) {$\tilde{C}_{1,0}$};
			\node (A) at (0.55,-0.15) {$\Gamma_3$};
			\node (A) at (0.55,-0.3) {$\tfrac{\partial u}{\partial \nv}+{\color{blue}{\beta}} u=g_3$};
		\end{tikzpicture}
	\end{subfigure}
	\caption
	{ The illustration for \eqref{stencil:corner:1} and \eqref{C:hat:tilde:corner:1}  in \cref{thm:Corner:1}. }
	\label{Ckl:4:points:Corner1}
\end{figure}
\begin{theorem}  \label{thm:Corner:1}
	Let  $(x_i^*,y_j^*)=(x_i,y_j)=(x_0,y_0)=\overline{\Gamma_1}\cap \overline{\Gamma_3}$.
	Then
	the following 4-point scheme  centered at $(x_0,y_0)$ (see \cref{4:points:scheme:Corner1,Ckl:4:points:Corner1}):
	\be \label{stencil:corner:1}
	\begin{aligned}
		h^{-1}	\mathcal{L}_h u_h  :=
		h^{-1}	\sum_{k=0}^{1}\sum_{\ell =0}^{1} C_{k,\ell}(u_{h})_{k,\ell}
		= \sum_{(m,n)\in \ind_{4}} f^{(m,n)}J_{m,n} + \sum_{n=0}^{5}g_{1}^{(n)}J_{g_{1},n} + \sum_{m=0}^{5}g_{3}^{(m)}J_{g_{3},m},
	\end{aligned}	
	\ee
	achieves the sixth-order consistency  for $\frac{\partial u}{\partial \nv}+\alpha u=g_1$ and $\frac{\partial u}{\partial \nv}+\beta u=g_3$ at the point $(x_0,y_0)$, where
	\be\label{C:hat:tilde:corner:1}
	\begin{split}
		& C_{k,\ell}:=\hat{C}_{k,\ell} + \tilde{C}_{k,\ell}, \quad c_{k,\ell,p}:=\hat{c}_{k,\ell,p} + \tilde{c}_{k,\ell,p}, \quad \hat{C}_{k,\ell}:=\sum_{p=0}^6  \hat{c}_{k,\ell,p}h^p, \quad \tilde{C}_{k,\ell}:=\sum_{p=0}^6  \tilde{c}_{k,\ell,p}h^p,
	\end{split}
	\ee
	$\{\hat{c}_{k,\ell,p} : \hat{c}_{k,\ell,p}\in \R, k,\ell=0,1 \}_{p=0,\dots,6}$ and $\{\tilde{c}_{k,\ell,p} : \tilde{c}_{k,\ell,p}\in \R, k,\ell=0,1\}_{p=0,\dots,6}$ are any nontrivial solutions of the linear system induced by \eqref{Corner:1:EQ} with $M=6$,
	$\{J_{m,n}\}_{(m,n)\in \ind_{4}}$, $\{J_{g_1,n}\}_{n=0}^5$ and  $\{J_{g_3,m}\}_{m=0}^5$ are defined in \eqref{Right:corner:1} with $M=6$. 	By the symbolic calculation,
	the linear system in \eqref{Corner:1:EQ} with $M=6$ always has nontrivial solutions.
	 Furthermore,
\begin{itemize}
	\item[(i)] One necessary condition for $\{C_{k,\ell}\}$ in \eqref{stencil:corner:1} to satisfy \eqref{suff:nece:sum:cond} is  $\alpha(y_0)+\beta(x_0)\ge 0$;
	\item[(ii)] There must exist a nontrivial solution of \eqref{Corner:1:EQ} with $M=6$  such that
	$\{C_{k,\ell}\}$ in \eqref{stencil:corner:1}  satisfies the sign condition \eqref{sign:condition} and the sum condition \eqref{sum:condition} for any mesh size $h$ if  $\alpha(y_0)+\beta(x_0)\ge 0$.
\end{itemize}
\end{theorem}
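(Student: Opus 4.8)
The plan is to mirror, step for step, the structure of the proof of \cref{thm:Robin:Gamma1}, since \cref{thm:Corner:1} is its two-sided (corner) analogue. The key new ingredient is that at the corner $(x_0,y_0)=\overline{\Gamma_1}\cap\overline{\Gamma_3}$ we have two boundary conditions, and the FDM is built additively as $C_{k,\ell}=\hat C_{k,\ell}+\tilde C_{k,\ell}$, where $\hat C$ treats the $\Gamma_1$ (Robin in $x$) condition exactly as in \eqref{u0n:En:alpha}--\eqref{En:alpha} and $\tilde C$ treats the $\Gamma_3$ (Robin in $y$) condition exactly as in \eqref{um0:Em:beta}--\eqref{Em:tilde:beta}. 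First I would define the half-stencil operator $h^{-1}\mathcal L_h u:=h^{-1}\sum_{k=0}^1\sum_{\ell=0}^1 C_{k,\ell}u(x_0+kh,y_0+\ell h)$ with $C_{k,\ell}=\sum_{p=0}^{M}c_{k,\ell,p}h^p$, split $C_{k,\ell}=\hat C_{k,\ell}+\tilde C_{k,\ell}$, and apply \eqref{u0n:En:alpha} to the $\hat C$ part and \eqref{um0:Em:beta} to the $\tilde C$ part of the Taylor expansion. This produces $h^{-1}\mathcal L_h u = h^{-1}\sum_{n=0}^M u^{(0,n)}\hat I_n + h^{-1}\sum_{m=0}^M u^{(m,0)}\tilde I_m + (\text{terms in }f,g_1,g_3)+\bo(h^M)$, where $\hat I_n:=\sum_{k,\ell}\hat C_{k,\ell}E_n(kh,\ell h)$ and $\tilde I_m:=\sum_{k,\ell}\tilde C_{k,\ell}\tilde E_m(kh,\ell h)$, and the $f,g_1,g_3$ terms define $J_{m,n},J_{g_1,n},J_{g_3,m}$ (which is \eqref{Right:corner:1}). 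Matching against the right-hand side of \eqref{stencil:corner:1} then forces $h^{-1}\mathcal L_h(u-u_h)=\bo(h^M)$ provided the two decoupled systems $\hat I_n=\bo(h^{M+1})$ for $n=0,\dots,M$ and $\tilde I_m=\bo(h^{M+1})$ for $m=0,\dots,M$ hold; writing $E_n$ and $\tilde E_m$ explicitly yields \eqref{Corner:1:EQ}. A symbolic check (identical in spirit to the one in \cref{thm:Robin:Gamma1}) shows each system has nontrivial solutions iff $M\le 6$, giving the sixth-order consistency claim.

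Next I would pass to matrix form, exactly as in \eqref{Matrix:Form:1:Gamma1}--\eqref{Ad:Gamma1}. The $\hat C$ system becomes $\hat A_0\hat C_0=\mathbf 0$, $\hat A_d\hat C_d=\hat b_d$ for $d=1,\dots,6$, with constant matrices $\hat A_d$ of the same type as \eqref{A0:Gamma1} (now a $4$-vector $\hat C_d=(\hat c_{0,0,d},\hat c_{0,1,d},\hat c_{1,0,d},\hat c_{1,1,d})^{\textsf T}$), and the $\tilde C$ system becomes $\tilde A_0\tilde C_0=\mathbf 0$, $\tilde A_d\tilde C_d=\tilde b_d$, where $\tilde A_d$ is obtained from $\hat A_d$ by the $x\leftrightarrow y$ symmetry and $\tilde b_d$ depends on $\{c\text{'s of lower order}\}$, $\{a^{(i,j)}\}$ and $\{\beta^{(i)}\}$. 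Superposing, $C_d=\hat C_d+\tilde C_d$, and the crucial bookkeeping identity to establish — the analogue of "$\sum c_{k,\ell,1}=6\alpha^{(0)}$" in \eqref{Gamma1:ckl1:degree:1} and the corresponding relations through $d=6$ — is that the degree-$1$ row sum satisfies $\sum_{k,\ell}c_{k,\ell,1}=c(\alpha^{(0)}+\beta^{(0)})$ for a fixed positive constant $c$ (here $c=2$ because the corner touches one grid line of each boundary), and that for $d=0$ the row sum is $0$, while for $d\ge2$ the row sums are expressible through lower-order coefficients, $\alpha^{(0)},\beta^{(0)}$, and $\{a^{(i,j)}\}$. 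From $\sum_{k,\ell}c_{k,\ell,0}=0$ and $\sum_{k,\ell}c_{k,\ell,1}=c(\alpha(y_0)+\beta(x_0))$, the necessity of $\alpha(y_0)+\beta(x_0)\ge 0$ for \eqref{suff:nece:sum:cond} is immediate, proving item (i).

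For item (ii) I would, as in the proof of \cref{thm:Robin:Gamma1}, fix enough free parameters in $\hat C_d$ and $\tilde C_d$ (a corner analogue of \eqref{Gamma1:Set:Free}) so that one free scalar $c_{1,1,d}$ remains at each level $d=1,\dots,6$, write the general solution in the form $c_{k,\ell,d}=(\text{integer multiple of }c_{1,1,d})+r_{d,p}$ with $r_{d,p}$ depending only on lower-order coefficients, $\{a^{(i,j)}:(i,j)\in\ind_5\}$, $\{\alpha^{(i)}\}_{i=0}^5$, $\{\beta^{(i)}\}_{i=0}^5$, and then choose $c_{1,1,d}$ as large as possible subject to the sign requirements \eqref{suff:nece:sign:cond} and the level-$d$ sum requirement in \eqref{suff:nece:sum:cond}. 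The sum requirement at level $d$ has the shape $\sum_{k,\ell}c_{k,\ell,d}=(\alpha^{(0)}+\beta^{(0)})\,(\text{lower-order data})+r_{d,\ast}$; when $\alpha^{(0)}+\beta^{(0)}>0$ this imposes an extra one-sided bound $c_{1,1,d}\le s_d$ with $s_d$ finite (since $a\ne0$ forces all $r$'s finite), and when $\alpha^{(0)}+\beta^{(0)}=0$ one checks, as in the remark after \eqref{Gamma1:ckl6:degree:6}, that every level-$d$ sum vanishes automatically, so $s_d=0$ works. The explicit non-empty intervals $c_{1,1,d}\le\min\{\dots\}$ (the corner analogue of \eqref{Gamma1:Interval}--\eqref{Gamma1:sd:345}) then certify the existence of stencil coefficients satisfying both \eqref{sign:condition} and \eqref{sum:condition} for every $h>0$ whenever $\alpha(y_0)+\beta(x_0)\ge0$, which is item (ii).

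I expect the main obstacle to be purely the bookkeeping of the additive decomposition: one must verify that the $\hat C$- and $\tilde C$-systems genuinely decouple (i.e. that the $u^{(0,n)}$-rows and $u^{(m,0)}$-rows of the consistency equations do not interfere, which relies on $E_n$ depending only on $\alpha$ and $\tilde E_m$ only on $\beta$, and on $u^{(0,0)}$ being the single shared unknown handled consistently by $T^{u_+}$-style trivial coefficients), and then that the superposed row sums combine to give exactly $c(\alpha^{(0)}+\beta^{(0)})$ at degree one and nothing pathological at higher degrees. Once this symbolic structure is in place, items (i) and (ii) follow by the same min-over-finite-quantities argument used for \cref{thm:Robin:Gamma1}, and the non-vanishing of all the $r$'s is again guaranteed by $a\ne0$ on $\overline\Omega$.
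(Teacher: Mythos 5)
There is a genuine gap at the heart of your plan: the two systems you propose do \emph{not} decouple in a usable way, and if you force them to decouple the scheme degenerates to zero. Concretely, you expand the $\hat C$ part in $\{u^{(0,n)}\}_{n=0}^{M}$ via \eqref{u0n:En:alpha} and the $\tilde C$ part in $\{u^{(m,0)}\}_{m=0}^{M}$ via \eqref{um0:Em:beta}, and then demand separately that $\hat I_n=\bo(h^{M+1})$ for $n=0,\dots,M$ and $\tilde I_m=\bo(h^{M+1})$ for $m=0,\dots,M$. At order $d=0$ each of these is a homogeneous system of $M+1=7$ equations in only $4$ unknowns ($\hat c_{k,\ell,0}$, resp.\ $\tilde c_{k,\ell,0}$, for $k,\ell\in\{0,1\}$); its coefficient matrix is the $7\times 4$ submatrix of \eqref{A0:Gamma1} obtained by deleting the $\ell=-1$ columns (equivalently, the first four columns of \eqref{A0:Corner1}), and a direct check of rows $2$ and $4$ forces $\hat c_{0,1,0}=\hat c_{1,1,0}=0$, then row $3$ forces $\hat c_{1,0,0}=0$ and row $1$ forces $\hat c_{0,0,0}=0$. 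So the only solution is trivial, and your route cannot produce a nontrivial sixth-order $4$-point scheme. The underlying reason is that requiring the coefficients of $u^{(0,n)}$ and of $u^{(m,0)}$ to vanish \emph{separately} is only sufficient, never necessary: the pure $x$-derivatives $u^{(m,0)}$ are not independent of $\{u^{(0,n)}\}$ at the corner, since the PDE (via \eqref{upq1}, i.e.\ \eqref{um0:to:u0n}) and the $\Gamma_1$ Robin condition \eqref{u1n:Gamma1} express every $u^{(m,0)}$ through $\{u^{(0,n)}\}$, $f$, $g_1$ and $\alpha$.

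The paper's proof exploits exactly this dependence: it substitutes \eqref{um0:to:u0n} and \eqref{u1n:Gamma1} into \eqref{um0:Em:beta} to rewrite the $\tilde C$ contribution in terms of $u^{(0,n)}$ as well (see \eqref{um0:corner:alpha}--\eqref{uIm:tilde:corner:beta}), so that both halves feed into a \emph{single coupled} family of consistency conditions \eqref{Corner:1:EQ}, one equation per $n=0,\dots,M$, in the $8$ unknowns $(\hat c_{k,\ell,d},\tilde c_{k,\ell,d})$ per degree $d$. The resulting matrix \eqref{A0:Corner1} is $7\times 8$ and has a nontrivial kernel, which is what makes the theorem work; note also that the coupling makes $b_d$ depend on both $\{\alpha^{(i)}\}$ and $\{\beta^{(i)}\}$ through the $p_{m,n}$ and $\tilde E_m$, rather than splitting cleanly by boundary. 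Your subsequent arguments for items (i) and (ii) (row sums, fixing free parameters, taking a minimum over finitely many finite bounds) are in the right spirit and mirror what the paper does for \cref{thm:Robin:Gamma1}, but as written they are built on the decoupled structure and therefore inherit the gap; they need to be rerun on the coupled $7\times 8$ system \eqref{Matrix:Form:1:Corner1} with the parameter choices \eqref{Corner1:Set:Free}.
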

\begin{proof}[\textbf{Proof of \cref{thm:Corner:1}}]
	\begin{figure}[h]
		\centering
				\hspace{29mm}	
		\begin{subfigure}[b]{0.15\textwidth}
		\hspace{-40mm}
			\begin{tikzpicture}[scale = 0.45]
				\node (A) at (0,7) {$u^{(0,0)}$};
				\node (A) at (0,6) {$u^{(0,1)}$};
				\node (A) at (0,5) {$u^{(0,2)}$};
				\node (A) at (0,4) {$u^{(0,3)}$};
				\node (A) at (0,3) {$u^{(0,4)}$};
				\node (A) at (0,2) {$u^{(0,5)}$};
				\node (A) at (0,1) {$u^{(0,6)}$};
				\node (A) at (2,7) {$u^{(1,0)}$};
				\node (A) at (2,6) {$u^{(1,1)}$};
				\node (A) at (2,5) {$u^{(1,2)}$};
				\node (A) at (2,4) {$u^{(1,3)}$};
				\node (A) at (2,3) {$u^{(1,4)}$};
				\node (A) at (2,2) {$u^{(1,5)}$};	
				\node (A) at (4,7) {$u^{(2,0)}$};
				\node (A) at (4,6) {$u^{(2,1)}$};
				\node (A) at (4,5) {$u^{(2,2)}$};
				\node (A) at (4,4) {$u^{(2,3)}$};
				\node (A) at (4,3) {$u^{(2,4)}$};
				\node (A) at (6,7) {$u^{(3,0)}$};
				\node (A) at (6,6) {$u^{(3,1)}$};
				\node (A) at (6,5) {$u^{(3,2)}$};
				\node (A) at (6,4) {$u^{(3,3)}$};
				\node (A) at (8,7) {$u^{(4,0)}$};
				\node (A) at (8,6) {$u^{(4,1)}$};
				\node (A) at (8,5) {$u^{(4,2)}$};
				\node (A) at (10,7) {$u^{(5,0)}$};
				\node (A) at (10,6) {$u^{(5,1)}$};
				\node (A) at (12,7) {$u^{(6,0)}$};
				\draw[ultra thick, blue][->] (9,5) to[left] (13,5);     	
			\end{tikzpicture}
		\end{subfigure}
		\hspace{-0.6cm}
		\begin{subfigure}[b]{0.2\textwidth}
			\hspace{-30mm}
			\vspace{0.8cm}
			\begin{tikzpicture}[scale = 0.45]
				\node (A) at (0,7) {$u^{(0,0)}$};
				\node (A) at (0,6) {$u^{(0,1)}$};
				\node (A) at (2,7) {$u^{(1,0)}$};
				\node (A) at (2,6) {$u^{(1,1)}$};	
				\node (A) at (4,7) {$u^{(2,0)}$};
				\node (A) at (4,6) {$u^{(2,1)}$};
				\node (A) at (6,7) {$u^{(3,0)}$};
				\node (A) at (6,6) {$u^{(3,1)}$};
				\node (A) at (8,7) {$u^{(4,0)}$};
				\node (A) at (8,6) {$u^{(4,1)}$};
				\node (A) at (10,7) {$u^{(5,0)}$};
				\node (A) at (10,6) {$u^{(5,1)}$};
				\node (A) at (12,7) {$u^{(6,0)}$};
				\draw[ultra thick, blue][->] (6,5.5) to[left] (6,3.5);  	      	
			\end{tikzpicture}
		\end{subfigure}
		\begin{subfigure}[b]{0.15\textwidth}
			\hspace{-4cm}
			\vspace{0.2cm}
			\begin{tikzpicture}[scale = 0.45]
				\node (A) at (0,7) {$u^{(0,0)}$};
				\node (A) at (2,7) {$u^{(1,0)}$};	
				\node (A) at (4,7) {$u^{(2,0)}$};
				\node (A) at (6,7) {$u^{(3,0)}$};
				\node (A) at (8,7) {$u^{(4,0)}$};
				\node (A) at (10,7) {$u^{(5,0)}$};
				\node (A) at (12,7) {$u^{(6,0)}$};
			\end{tikzpicture}
		\end{subfigure}
		\hspace{-0.4cm}
		 \begin{subfigure}[b]{0.1\textwidth}
		 	\hspace{-0.5cm}
		 	\begin{tikzpicture}[scale = 0.45]
		 		\node (A) at (0,7) {$u^{(0,0)}$};
		 		\node (A) at (0,6) {$u^{(0,1)}$};
		 		\node (A) at (0,5) {$u^{(0,2)}$};
		 		\node (A) at (0,4) {$u^{(0,3)}$};
		 		\node (A) at (0,3) {$u^{(0,4)}$};
		 		\node (A) at (0,2) {$u^{(0,5)}$};
		 		\node (A) at (0,1) {$u^{(0,6)}$};
		 		\node (A) at (2,7) {$u^{(1,0)}$};
		 		\node (A) at (2,6) {$u^{(1,1)}$};
		 		\node (A) at (2,5) {$u^{(1,2)}$};
		 		\node (A) at (2,4) {$u^{(1,3)}$};
		 		\node (A) at (2,3) {$u^{(1,4)}$};
		 		\node (A) at (2,2) {$u^{(1,5)}$};
		 		\draw[ultra thick, blue][->] (-2,1.5) to[left] (-1,1.5); 	  	
		 	\end{tikzpicture}
		 \end{subfigure}
					\hspace{0.2cm}
				 \begin{subfigure}[b]{0.05\textwidth}
			\begin{tikzpicture}[scale = 0.45]
				\node (A) at (0,7) {$u^{(0,0)}$};
				\node (A) at (0,6) {$u^{(0,1)}$};
				\node (A) at (0,5) {$u^{(0,2)}$};
				\node (A) at (0,4) {$u^{(0,3)}$};
				\node (A) at (0,3) {$u^{(0,4)}$};
				\node (A) at (0,2) {$u^{(0,5)}$};
				\node (A) at (0,1) {$u^{(0,6)}$};
				\draw[ultra thick, blue][->] (-2,4) to[left] (-1,4); 	  	
			\end{tikzpicture}
		\end{subfigure}
		\caption
		{The illustration for \eqref{u:approx:tilde}--\eqref{um0:Em:beta}, \eqref{um0:to:u0n}--\eqref{uIm:2:corner:beta} with $M=6$.}
		\label{Corner:umn}
	\end{figure}
		 In the following statement, \eqref{um0:to:u0n}--\eqref{Corner:1:EQ} are used to derive \eqref{stencil:corner:1}, \eqref{Matrix:Form:1:Corner1}--\eqref{Corner1:Set:Free} are used to derive the necessary and sufficient condition for $\{C_{k,\ell}\}$ in \eqref{stencil:corner:1} to satisfy \eqref{suff:nece:sign:cond} and \eqref{suff:nece:sum:cond}, and prove items (i) and (ii).
	\eqref{upq1} implies (see \cref{Corner:umn}):
	\be\label{um0:to:u0n}
	u^{(m,0)}=\sum_{n=0}^{M}
	u^{(0,n)}\lambda_{m,n} +\sum_{n=0}^{M-1}
	u^{(1,n)}\mu_{m,n}  +\sum_{(i,j)\in \ind_{M-2}}
	f^{(i,j)}\nu_{m,i,j}, \qquad 0\le m \le M,
	\ee
	where $\lambda_{m,n}=a^u_{m,0,0,n}$, $\mu_{m,n}=a^u_{m,0,1,n}$, and $\nu_{m,i,j}=a^f_{m,0,i,j}$.
	By \eqref{u1n:Gamma1} and \eqref{um0:to:u0n},
	\be\label{um0:corner:alpha}
	\begin{split}
		u^{(m,0)}&=\sum_{n=0}^{M}
		u^{(0,n)}\lambda_{m,n} +\sum_{n=0}^{M-1}
		\bigg( \sum_{i=0}^n  {n\choose i}  {\alpha}^{(n-i)}u^{(0,i)} - g_1^{(n)} \bigg) \mu_{m,n}  +\sum_{(i,j)\in \ind_{M-2}}
		f^{(i,j)}\nu_{m,i,j}\\
		&=\sum_{n=0}^{M}
		u^{(0,n)}\lambda_{m,n} +\sum_{i=0}^{M-1} u^{(0,i)}
		\sum_{n=i}^{M-1}  {n\choose i}  {\alpha}^{(n-i)}\mu_{m,n} -\sum_{n=0}^{M-1} g_1^{(n)}   \mu_{m,n}  +\sum_{(i,j)\in \ind_{M-2}}
		f^{(i,j)}\nu_{m,i,j}\\
		&=\sum_{n=0}^{M}
		u^{(0,n)}p_{m,n}  -\sum_{n=0}^{M-1} g_1^{(n)}   \mu_{m,n}  +\sum_{(i,j)\in \ind_{M-2}}
		f^{(i,j)}\nu_{m,i,j}
	\end{split}
	\ee
	where
	$
	p_{m,n}= \lambda_{m,n} +  (1-\delta_{n,M}) \sum_{i=n}^{M-1}  {i\choose n}  {\alpha}^{(i-n)}\mu_{m,i}.
	$
	\eqref{um0:Em:beta} and \eqref{um0:corner:alpha} yield (see \cref{Corner:umn}):
	\be\label{uIm:2:corner:beta}
	\begin{split}
		u(x+x_i^*,y+y_j^*)
		&=   \sum_{n=0}^{M}
		u^{(0,n)} \sum_{m=0}^{M} p_{m,n} \tilde{E}_m(x,y)  -\sum_{n=0}^{M-1} g_1^{(n)}   \sum_{m=0}^{M} \mu_{m,n} \tilde{E}_m(x,y) \\
		& \quad  +\sum_{(m,n)\in \ind_{M-2}}
		f^{(m,n)}\sum_{i=0}^{M} \nu_{i,m,n}   \tilde{E}_i(x,y) -\sum_{m=0}^{M-1}
		g_{3}^{(m)}  \tilde{G}_{M,m,1}(x,y) \\
		&\quad+\sum_{(m,n)\in \ind_{M-2}}
		f^{(m,n)} \tilde{H}_{M,m,n}(x,y) +\bo(h^{M+1}).
	\end{split}
	\ee
	By \eqref{uIm:2:corner:beta}, we define the following $\tilde{u}(x+x_i^*,y+y_j^*)$ for the sake of presentation
	{\footnotesize{
	\be\label{uIm:tilde:corner:beta}
	\begin{split}
		\tilde{u}(x+x_i^*,y+y_j^*)
		&:=   \sum_{n=0}^{M}
		u^{(0,n)} \sum_{m=0}^{M} p_{m,n} \tilde{E}_m(x,y)   +\sum_{(m,n)\in \ind_{M-2}}
		f^{(m,n)}\big( \tilde{H}_{M,m,n}(x,y)+\sum_{i=0}^{M} \nu_{i,m,n}   \tilde{E}_i(x,y)\big) \\
		&  \quad   -\sum_{n=0}^{M-1} g_1^{(n)}   \sum_{m=0}^{M} \mu_{m,n} \tilde{E}_m(x,y)  -\sum_{m=0}^{M-1}
		g_{3}^{(m)}  \tilde{G}_{M,m,1}(x,y)+\bo(h^{M+1}).
	\end{split}
	\ee}}
	Choose $x_i^*=x_0$ and $y_j^*=y_0$,
	by \eqref{u0n:En:alpha} and \eqref{uIm:tilde:corner:beta}, we define
	\be\label{Lh:u:corner:define}
	\begin{split}
		& h^{-1}	\mathcal{L}_h u  := h^{-1} \sum_{k=0}^{1} \sum_{\ell=0}^{1} \hat{C}_{k,\ell} u(x_0 + kh, y_0 + \ell h) +h^{-1} \sum_{k=0}^{1} \sum_{\ell=0}^{1} \tilde{C}_{k,\ell} \tilde{u}(x_0 + kh, y_0 + \ell h),
	\end{split}
	\ee
	where
	\be\label{C:hat:tilde:corner1}
	\begin{split}
		& C_{k,\ell}:=\hat{C}_{k,\ell} + \tilde{C}_{k,\ell}, \quad c_{k,\ell,p}:=\hat{c}_{k,\ell,p} + \tilde{c}_{k,\ell,p}, \quad \hat{C}_{k,\ell}:=\sum_{p=0}^M  \hat{c}_{k,\ell,p}h^p, \quad \tilde{C}_{k,\ell}:=\sum_{p=0}^M  \tilde{c}_{k,\ell,p}h^p,
	\end{split}
	\ee	
	and $\hat{c}_{k,\ell,p}, \tilde{c}_{k,\ell,p}\in \R$.
	Then
	\be\label{Lh:u:corner:1}
	\begin{split}
		& h^{-1}	\mathcal{L}_h u   = 	 h^{-1} \sum_{n=0}^{M} u^{(0,n)} I_{n} + \sum_{(m,n) \in \ind_{	 M-2}} f^{(m,n)} J_{m,n}   + \sum_{n=0}^{	 M-1} g_1^{(n)} J_{g_1,n}  + \sum_{m=0}^{	M-1} g_3^{(m)} J_{g_3,m} +\bo(h^{M}),
	\end{split}
	\ee
	where
	\be\label{In:corner:1}
	\begin{split}
		I_n=\sum_{k=0}^{1} \sum_{\ell=0}^{1} \Big(\hat{C}_{k,\ell}E_n  (kh, \ell h)+\tilde{C}_{k,\ell}\sum_{m=0}^{M} p_{m,n} \tilde{E}_m   (kh, \ell h)\Big),
	\end{split}
	\ee	
	\be\label{Right:corner:1}
	\begin{split}
		& J_{m,n}= h^{-1}\sum_{k=0}^{1} \sum_{\ell=0}^{1} \Big( \hat{C}_{k,\ell}H_{M,m,n}  (kh, \ell h)+ \tilde{C}_{k,\ell} \Big( \tilde{H}_{M,m,n}(kh, \ell h)+\sum_{i=0}^{M} \nu_{i,m,n}   \tilde{E}_i(kh, \ell h)\Big)\Big), \\
		& J_{g_1,n}=-h^{-1}\sum_{k=0}^{1} \sum_{\ell=0}^{1} \Big(\hat{C}_{k,\ell} G_{M,1,n} (kh, \ell h)+\tilde{C}_{k,\ell}\sum_{m=0}^{M} \mu_{m,n} \tilde{E}_m(kh, \ell h)\Big),\\
		& J_{g_3,m}=-h^{-1}\sum_{k=0}^{1} \sum_{\ell=0}^{1}\tilde{C}_{k,\ell}  \tilde{G}_{M,m,1}(kh, \ell h).
	\end{split}
	\ee	
	Let
	\be\label{Lh:uh:corner:1}
	\begin{split}
		h^{-1}	\mathcal{L}_h u_h:=  h^{-1} \sum_{k=0}^{1} \sum_{\ell=0}^{1} C_{k,\ell} (u_h)_{k,\ell}  = \sum_{(m,n) \in \ind_{	 M-2}} f^{(m,n)} J_{m,n}   + \sum_{n=0}^{	 M-1} g_1^{(n)} J_{g_1,n}  + \sum_{m=0}^{	 M-1} g_3^{(m)} J_{g_3,m}.
	\end{split}
	\ee
	Then \eqref{Lh:u:corner:1} and \eqref{Lh:uh:corner:1} result in
	$
	h^{-1}	\mathcal{L}_h (u- u_h)=\bo(h^{M}),
	$
	if
	\be\label{Corner:1:EQ}
	\sum_{k=0}^{1} \sum_{\ell=0}^{1} \Big(\hat{C}_{k,\ell}E_n  (kh, \ell h)+\tilde{C}_{k,\ell}\sum_{m=0}^{M} p_{m,n} \tilde{E}_m   (kh, \ell h)\Big)=\bo(h^{M+1}), \quad  \text{for} \quad n=0,\dots, M.
	\ee
By the symbolic calculation,
	\eqref{Corner:1:EQ} has a nontrivial solution $\{C_{k,\ell}\}$ if and only if $M\le 6$.
	 \eqref{Right:corner:1}--\eqref{Corner:1:EQ} with $M=6$ yield \eqref{stencil:corner:1} in \cref{thm:Corner:1}.

	Next we check the existence of  $\{C_{k,\ell}\}$ in \eqref{stencil:corner:1} to   satisfy the sign condition \eqref{sign:condition} and the sum condition \eqref{sum:condition} for any $h$.
	Similarly to
	 \eqref{regular:putGmn}--\eqref{Matrix:Form:1} or \eqref{Imn:irregular}--\eqref{irregular:simplify2},
	the system of linear equations in \eqref{Corner:1:EQ} can be represented in the following matrix form:
	\be\label{Matrix:Form:1:Corner1}
	A_0C_0=\textbf{0},\
	A_d C_d=b_d,\  d=1,\dots,M,\
	C_d:=(\hat{c}_{0,0,d},
		\hat{c}_{0,1,d},
		\hat{c}_{1,0,d},
		\hat{c}_{1,1,d},
		\tilde{c}_{0,0,d},
		\tilde{c}_{0,1,d},
		\tilde{c}_{1,0,d},
		\tilde{c}_{1,1,d})^{\textsf{T}},
	\ee
	where all $A_d$ with $0\le d\le M$ are constant matrices,
	$b_d$  depends on $\{ C_i : 0\le i \le d-1 \}$, $\{a^{(i,j)}: (i,j) \in \ind_{M-1}\}$, $\{\alpha^{(i)}:  0\le i \le M-1 \}$ and $\{\beta^{(i)}:  0\le i \le M-1 \}$ for $1\le d\le M$.
For example, similar to \eqref{A0:Regular}--\eqref{A1:A7:regular}, $\{A_d: 0\le d\le 6 \}$ in \eqref{Matrix:Form:1:Corner1} with $M=6$ is
	{\small{
		\be\label{A0:Corner1}
		A_0=	\begin{pmatrix}
1& 1& 1& 1& 1& 1& 1& 1\\			
0& 1& 0& 1& 0& 0& 0& 0\\
0& 1/2& -1/2& 0& 0& 1/2& -1/2& 0\\ 	
0& 1/6& 0& -1/3& 0& 0& 0& 0\\
0& 1/24& 1/24& -1/6& 0& 1/24& 1/24& -1/6\\
0& 1/120& 0& -1/30& 0& 0& 0& 0\\ 			
0& 1/720& -1/720& 0& 0& 1/720& -1/720& 0\\
	\end{pmatrix},\quad \text{the size of $A_0$ is } 7\times 8,
\ee}}
\be\label{Ad:Corner1}
\begin{split}
A_1=A_0(1:6,:), \quad A_2=A_0(1:5,:),	\quad A_3=A_0(1:4,:),\quad A_4=A_0(1:3,:),\quad  A_5=A_0(1:2,:),
\end{split}
\ee
and $A_6=A_0(1,:)$, where the submatrix $A_0(1:n,:)$ consists of the first $n$ rows of $A_0$.
	Similarly to the proof of \cref{thm:Robin:Gamma1},  one necessary condition for $\{C_{k,\ell}\}$ in \eqref{stencil:corner:1} to   satisfy \eqref{suff:nece:sum:cond} is $\alpha^{(0)}+\beta^{(0)}\ge 0$.
	Let
	\be\label{Corner1:Set:Free}
	\begin{split}
		& \tilde{c}_{1,1,0}=-1, \qquad \tilde{c}_{0,0,0}=\tilde{c}_{1,0,0}=\tilde{c}_{0,0,1}=\tilde{c}_{1,0,1}=\tilde{c}_{0,0,2}=\tilde{c}_{1,0,2}=0, \\
		&  \tilde{c}_{0,1,3}=  \tilde{c}_{1,1,3}, \quad  \tilde{c}_{0,0,3}=\tilde{c}_{1,0,3}=0, \quad \hat{c}_{1,1,4}=  \tilde{c}_{1,1,4},   \quad  \tilde{c}_{0,1,4}=  2\tilde{c}_{1,1,4}, \quad  \tilde{c}_{0,0,4}=\tilde{c}_{1,0,4}=0,\\
		&    \hat{c}_{1,0,5}=\hat{c}_{1,1,5}= \tilde{c}_{1,0,5}= \tilde{c}_{1,1,5},  \qquad  \tilde{c}_{0,1,5}=  3\tilde{c}_{1,1,5}, \qquad  \tilde{c}_{0,0,5}=0,\\
		&    \hat{c}_{0,1,6}=\hat{c}_{1,0,6}= \hat{c}_{1,1,6}=  \tilde{c}_{0,1,6}= \tilde{c}_{1,0,6}=  \tilde{c}_{1,1,6},  \qquad  \tilde{c}_{0,0,6}=0.
	\end{split}
	\ee
Then	similar to \eqref{Gamma1:ckl0:degree:0}--\eqref{Gamma1:sd:345}, we can prove the item (ii) of \cref{thm:Corner:1}.
\end{proof}	
%
%
\noindent
\textbf{Stencil coefficients $\{C_{k,\ell}\}$ in \cref{thm:Corner:1} forming an M-matrix  for numerical tests:}
To verify the 4-point scheme \eqref{stencil:corner:1} of  \cref{thm:Corner:1} with numerical experiments in \cref{sec:numerical}, we use the unique $\{C_{k,\ell}\}$ by solving $A_0C_0=\textbf{0}$ and $A_dC_d=b_d$ in \eqref{Matrix:Form:1:Corner1} with $M=6$, \eqref{Corner1:Set:Free} and choosing  $\tilde{c}_{1,1,d}$ to be the maximum value such that
\[
\begin{cases}
	\hat{c}_{k,\ell,d}+\tilde{c}_{k,\ell,d}\ge 0, &\quad \mbox{if} \quad (k,\ell)=(0,0),\\
	\hat{c}_{k,\ell,d}+\tilde{c}_{k,\ell,d}\le  0, &\quad \mbox{if} \quad (k,\ell)\ne(0,0),
\end{cases}\quad \text{and} \quad
\sum_{k}\sum_{\ell} (\hat{c}_{k,\ell,d}+ \tilde{c}_{k,\ell,d}) \ge 0, \quad \text{for} \quad d=1,\dots,6.
\]
 By the proof of  \cref{thm:Corner:1},  if  $\alpha(y_0)+\beta(x_0)\ge 0$, then the above unique $\{C_{k,\ell}\}$ must exist and satisfy the sign condition \eqref{sign:condition}  and the sum condition \eqref{sum:condition}  for any $h$.
Similarly, we can obtain  4-point schemes with the sixth-order consistency at $(x_i,y_j)=\overline{\Gamma_1}\cap \overline{\Gamma_4}$, $\overline{\Gamma_2}\cap \overline{\Gamma_3}$ and $\overline{\Gamma_2}\cap \overline{\Gamma_4}$ (see \cref{fig:model:problem}).

\end{document}